\newtheorem{theorem}{Theorem}[section]
\newtheorem{prop}[theorem]{Proposition}
\newtheorem{lemma}[theorem]{Lemma}
\newtheorem{cor}[theorem]{Corollary}
\newtheorem{rem}[theorem]{Remark}
\newenvironment{proof}{\par\noindent\textbf{Proof}\hspace{1em}}{\qed}
\def\<{\langle}
\def\>{\rangle}
\newcommand{\PAG}{\mathbb{P}}
\newcommand{\cC}{\mathcal{Cy}}
\newcommand{\cE}{\mathcal{E}}
\newcommand{\cF}{\mathcal{F}}
\newcommand{\A}{\mathbb{A}}
\newcommand{\K}{\mathbb{K}}
\newcommand{\C}{\mathbb{C}}
\newcommand{\cS}{\mathcal{S}}
\newcommand{\cH}{\mathcal{H}}
\newcommand{\cD}{\mathcal{D}}
\newcommand{\cG}{\mathcal{G}}
\newcommand{\cV}{\mathcal{V}}
\newcommand{\cL}{\mathcal{L}}
\renewcommand{\O}{\mathbb{O}}
\newcommand{\N}{\mathbb{N}}
\def\EQ{symp}
\def\qed{{\hfill\hphantom{.}\nobreak\hfill$\Box$}}
\begin{document}

\author{J.~Schillewaert\thanks{Research supported by Marie Curie IEF grant GELATI (EC grant nr 328178)} \and H.~Van Maldeghem\thanks{Corresponding author; Tel:+32 9 264 49 11; Fax:+32 9 264 49 93}}
\title{\bf On the varieties of the second row of the split Freudenthal-Tits Magic Square}
\date{\footnotesize\begin{tabular}{ll}Department of Mathematics, &Department of Mathematics,\\
Imperial College, & Ghent University,\\
South Kensington Campus,& Krijgslaan 281-S22,\\
 London SW7-2AZ,& B-9000 Ghent,\\
UK & BELGIUM\\
\texttt{jschillewaert@gmail.com} & 
\texttt{hvm@cage.ugent.be}
\end{tabular}
}
\maketitle

{\footnotesize \texttt{Keywords:} Severi variety, Veronese variety, Segre variety, Grassmann variety, exceptional Tits-building. \\ \texttt{MSC Code:} 51E24, 51A45}

\begin{abstract}
Our main aim is to provide a uniform geometric characterization of the analogues over arbitrary fields of the four complex Severi varieties, i.e.~the quadric Veronese varieties in 5-dimensional projective spaces, the Segre varieties in 8-di\-men\-sional projective spaces, the line Grassmannians in 14-dimensional projective spaces, and the exceptional varieties of type $\mathsf{E}_{6}$ in 26-dimensional projective space. Our theorem can be regarded as a far-reaching generalization of Mazzocca and Melone's approach to finite quadric Veronesean varieties. This approach takes projective properties of complex Severi varieties as smooth varieties as axioms. 
\end{abstract}
\newpage
\section{Introduction}
\subsection{General context of the results}
In the mid fifties, Jacques Tits \cite{TitsThesis} found a new way to approach complex Lie groups by attaching a canonically defined abstract geometry to it. The main observation was that the Dynkin diagram related to the simple complex Lie algebra could be interpreted as an identity card for this geometry. This led Tits to define such geometries over arbitrary fields giving birth to the theory of \emph{buildings}. In the same work \cite{TitsThesis}, Jacques Tits introduced for the first time what Freudenthal would call much later the \emph{Magic Square}, which was in \cite{TitsThesis} a $(4\times 4)$-table  of Dynkin diagrams, each symbolizing a precise geometry that was constructed before, except for the very last entry (the $\mathsf{E}_8$-entry), for which Tits only made some conjectures regarding various dimensions. All these geometries were defined over the complex numbers, but one could easily extend the construction to arbitrary fields, although small characteristics would give some problems. It is essential to note that these geometries were constructed as subgeometries of a projective geometry, and not as abstract geometries. As such, they can be regarded as a kind of realization of the corresponding building (which is the abstract geometry). The geometries of the Magic Square, later better known as the \emph{Freudenthal-Tits Magic Square} \cite{Tits2}, were intensively investigated with tools from algebraic geometry, since they define smooth varieties in complex projective space. One prominent example of this, which is directly related to the present paper, is the classification of the complex Severi varieties by Zak \cite{Zak}, see also Chaput \cite{Chaput} and Lazarsfeld \& Van de Ven \cite{Laz-AVV:84}.  It turns out that the complex Severi varieties correspond exactly to the split geometries of the second row of the Freudenthal-Tits Magic Square (FTMS), namely the quadric Veronese varieties in 5-dimensional projective spaces, the Segre varieties in 8-dimensional projective spaces, the line Grassmannians in 14-dimensional projective spaces, and the exceptional varieties of type $\mathsf{E}_{6}$ in 26-dimensonal projective space. For a recent approach with K-theory, see Nash \cite{Nash}. In the present paper we present a way to approach the geometries of the second row of the FTMS \emph{over any field}. The main idea can be explained by both a bottom-up approach and a top-down approach. 

\begin{itemize}\item \textbf{Bottom-up.} The smallest complex Severi variety or, equivalently, the geometry of the first cell of the second row of the complex FTMS, is the Veronesean of all conics of a complex projective plane. This object can be defined over any field, and a characterization of the finite case by Mazzocca-Melone \cite{Maz-Mel:84} was achieved in the mid-eighties. This characterization was generalized to arbitrary fields by the authors \cite{JSHVM1}. Here the question is whether a further generalization is possible by considering ``quadrics'' in the Mazzocca-Melone axioms instead of conics. A first step was made in \cite{JSHVM2,JSHVM3}, where all quadrics in $3$-space which are not the union of two planes are considered, and the corresponding objects are classified. The axioms below generalize this further to all dimensions, using split quadrics. Hence, our results can be seen as a far-reaching generalization of Mazzocca \& Melone's characterization of the quadric Veronesean in $5$-dimensional finite projective space.  

\item \textbf{Top-down.} Here, the question is how to include the (split) geometries of the second row of the FTMS, defined over arbitrary fields in Zak's result. Thus, one would like to have a characteristic property of these point sets in projective space close to the requirement of being a Severi variety (which, roughly, just means that the secant variety of the smooth non-degenerate complex variety  is not the whole space, and the dimension of the space is minimal with respect to this property). The naturalness of the Mazzocca-Melone axioms alluded to above is witnessed by the fact that they reflect the properties of complex Severi varieties used by Chaput \cite{Chaput} and by  Lazarsfeld \& Van de Ven \cite{Laz-AVV:84} to give an alternative proof of Zak's result.  Note that,  indeed, Zak proves (see Theorem IV.2.4 of \cite{Zak}) that every pair of points of a $2n$-dimensional complex Severi variety is contained in a non-degenerate $n$-dimensional quadric (and no more points of the variety are contained in the space spanned by that quadric). Also, the spaces generated by two of these quadrics intersect in a space entirely contained in both quadrics. These are the (Mazzocca-Melone) properties that we take as axioms, together with the in the smooth complex case obvious fact that the tangent space at a point is (at most) $2n$-dimensional. The latter is achieved by requiring that the tangent spaces to the quadrics through a fixed point are contained in a $2n$-dimensional space. Remarkably, we show in the present paper that these requirements suffice to classify these ``point sets''. 

\end{itemize}

So it is interesting to see how these two points of view meet in our work. There remains to explain the choice of which kind of quadrics. We consider the same class as Zak was dealing with in the complex case: split quadrics (as the complex numbers are algebraically closed). Hence we consider so-called \emph{hyperbolic} quadrics in odd-dimensional projective space, and so-called \emph{parabolic} quadrics in even-dimensional projective space. This way, Zak's result follows entirely from ours, once the immediate consequences of the definition of Severi variety are accepted. 

The complexity and generality of the characterization (remember we work over arbitrary fields) does not allow a uniform proof dealing with all dimensions at the same time (where dimension $d$ means that $d+1$ is the dimension of the ambient projective space of the quadrics; we do not assume anything on the dimension of the whole space). Some small cases for $d$, however, have already been dealt with in the literature in other contexts: The case $d=1$, leading to the quadric Veronesean varieties in \cite{JSHVM1}, the case $d=2$, leading to the Segre varieties in \cite{JSHVM3}, and the case $d=3$, leading to nonexistence, in \cite{JSHVM4}. We consider it the main achievement of this paper to present a general and systematic treatment of the remaining cases, including the intrinsically more involved line-Grassmannians and exceptional varieties of type $\mathsf{E}_{6}$.  

We obtain more than merely a characterization of all geometries of the second row of the FTMS. Indeed, in contrast to Zak's original theorem, we do not fix the dimension of the space we are working in. This implies that we obtain some more ``varieties'' in our conclusion. Essentially, we obtain all subvarieties of the varieties of the second row of the split Freudenthal-Tits Magic Square that are controlled by the diagram of the corresponding building. 

Note that we do not attempt to generalize Zak's result to arbitrary fields in the sense of being secant defective algebraic varieties. Such a result would require scheme theory and does not fit into our general approach to the Freudenthal-Tits Magic Square.  

Let us mention that there is a non-split version of the second row of the FTMS, which consists of the varieties corresponding to the projective planes defined over quadratic alternative \emph{division} algebras. These objects satisfy the same Mazzocca-Melone axioms as we will introduce below, except that the quadrics are not split anymore, but on the contrary have minimal Witt index, namely Witt index $1$. It is indeed shown in \cite{KSVM} that point sets satisfying the Mazzocca-Melone axioms below, but for quadrics of Witt index 1, are precisely the Veronesean representation of the projective planes over composition division algebras (and this includes the rather special case of inseparable field extensions in characteristic 2). In fact, this leads to an even more daring conjecture which, loosely, is the following.
  
\begin{quote}\em
$(*)$ The split and non-split varieties related to the second row of the Freudenthal-Tits Magic Square are characterized by the Mazzocca-Melone axioms, using arbitrary nondegenerate quadrics.
\end{quote}

Conjecture $(*)$ thus consists of three parts, the \emph{split} case,  which uses split quadrics, the \emph{non-split} case, using quadrics of Witt index 1, and the \emph{intermediate} case, where one uses quadrics which have neither maximal nor minimal Witt index. If Conjecture $(*)$ is true, then the intermediate case does not occur.  The other cases are now proved. 


\subsection{The varieties of the second row of the split FTMS}

We now discuss the different geometries that are captured by the second row of the FTMS. 
The split case of the second row of the Freudenthal-Tits Magic Square (FTMS) can be seen as the family of ``projective planes'' coordinatized by the standard split composition algebras $\A$ over an arbitrary field $\K$. These algebras are $\K$, $\K\times\K$, $\mathsf{M}_{2\times2}(\K)$ and $\O'(\K)$, which are the field $\K$ itself, the split quadratic extension of $\K$ (direct product with component-wise addition and multiplication), the split quaternions (or the algebra of $2\times2$ matrices over $\K$), and the split octonions, respectively.   Each of these cases corresponds with a different cell in the second row of the FTMS.

The first cell corresponds with $\A=\K$ and contains the ordinary quadric Veronese variety of the standard projective plane over $\K$, as already mentioned above. Mazzocca and Melone \cite{Maz-Mel:84} characterized this variety in the finite case for fields of odd characteristic using three simple axioms, which we referred to as the \emph{Mazzocca-Melone axioms} above. They can be formulated as follows. Let $X$ be a spanning point set of a projective space, and $\Xi$ a family of planes such that $X\cap\xi$ is a conic for each $\xi\in\Xi$. Then Axiom 1 says that every pair of points of $X$ is contained in a member of $\Xi$; Axiom 2 says that the intersection of two members of $\Xi$ is entirely contained in $X$; Axiom 3 says that for given $x\in X$, the tangents at $x$ to the conics $X\cap\xi$ for which $x\in\xi$ are all contained in a common plane. This characterization has been generalized step-by-step by various authors  \cite{Hir-Tha:91,Tha-Mal:04b}, until the present authors proved it in full generality for Veronese surfaces over arbitrary fields \cite{JSHVM1} (where in the above axioms ``conics'' can be substituted with ``ovals'').

The second cell corresponds with $\A=\K\times\K$ and contains the ordinary Segre variety $\mathcal{S}_{2,2}(\K)$ of two projective planes. In \cite{JSHVM3}, the authors characterize this variety using the above Mazzocca-Melone axioms substituting ``conic'' with ``hyperbolic quadric in 3-dimensional projective space'', ``family of planes'' with ``family of $3$-spaces'', and in Axiom 3 ``common plane'' with ``common $4$-space'', and using a dimension requirement to exclude $\cS_{1,2}(\K)$ and $\cS_{1,3}(\K)$.

In the present paper, we consider the natural extension of these axioms using ``quadric of maximal Witt index (or ``split'' quadric) in $(d+1)$-dimensional space'', ``family of $(d+1)$-dimensional spaces'', and ``common $2d$-space'',  instead of ``conic'', ``family of planes'' and ``common $4$-space'', respectively. The first cell corresponds with $d=1$, the second with $d=2$. Sets satisfying these axioms will be referred to as \emph{Mazzocca-Melone sets of split type $d$}. It will turn out that the third cell corresponds with $d=4$ and the fourth with $d=8$.

The third cell corresponds with $\A=\mathsf{M}_{2\times2}(\K)$ and contains the line Grassmannian variety of projective $5$-space over $\K$.  The fourth cell corresponds with a split octonion algebra. The corresponding geometry is the $1$-shadow of the building of type $\mathsf{E}_6$ over $\K$.
In order to handle the latter case, we need an auxiliary result on varieties related to the half-spin geometries of buildings of type $\mathsf{D}_5$. In fact, that result corresponds with $d=6$. In short, our first Main Result states that Mazzocca-Melone sets of split type $d$ only exist for $d\in\{1,2,4,6,8\}$, and a precise classification will be given in these cases (containing the varieties mentioned above). As a corollary, we can single out the varieties of the second row of the FTMS by a condition on the dimension|namely, the same dimensions used by Zak.

\section{Mazzocca-Melone sets of split type}
\subsection{The axioms}
We now present the precise axioms and in a subsection the main examples, and then in the next section we can state our main results. 

In the definition of our main central object, the notion of a quadric plays an important role.  A quadric in a projective space is the null set of a quadratic form. This is an analytic definition. However, we insist on using only projective properties of quadrics as subsets of points of a projective space. One of the crucial notions we use is the tangent subspace $T_x(Q)$ at a point $x$ of a quadric $Q$. This can be defined analytically, leaving the possibility open that different quadratic forms (different up to a scalar multiple) could define the same point set, or that the tangent space would depend on the chosen basis, which would imply that the tangent space is not well defined. However, it is well know, and very easy to prove that a point $y\neq x$ belongs to the analytically defined tangent space at $x$ to $Q$ if and only if the line $\<x,y\>$ either meets $Q$ in exactly one point (namely, $x$), or fully belongs to $Q$. Alternatively, a point $z$ does not belong to the tangent space at $x$ if and only if the line $\<x,y\>$ intersects $Q$ in precisely $2$ points.  So $T_x(Q)$ is defined by the point set $Q$. 

Now let $X$ be a spanning point set of $\PAG^{N}(\mathbb{K})$, $N\in\N\cup\{\infty\}$, with $\mathbb{K}$ any field, and let $\Xi$ be a collection of $(d+1)$-spaces of $\PAG^{N}(\mathbb{K})$, $d\geq 1$, such that, for any $\xi\in\Xi$, the intersection $\xi\cap X=:X(\xi)$ is a non-singular split quadric (which we will call a \emph{\EQ}, inspired by the theory of parapolar spaces, see~\cite{Shu:12}; for $d=1$, we sometimes say \emph{conic})
in $\xi$ (and then, for $x\in X(\xi)$, we denote the tangent space at $x$ to $X(\xi)$ by $T_x(X(\xi))$ or sometimes simply by $T_x(\xi)$). We call $(X,\Xi)$ a \emph{Mazzocca-Melone set (of split type $d$)} if (MM1), (MM2) and (MM3) below hold.

The condition $d\geq 1$ stems from the observation that, if we allowed $d\in\{-1,0\}$, then we would only obtain trivial objects (for $d=-1$, a single point; for $d=0$ a set of points no 3 of which collinear and no 4 of which co-planar).

A Mazzocca-Melone set is called \emph{proper} if $|\Xi|>1$. Non-proper Mazzocca-Melone sets of split type are just the split quadrics themselves (we use the expression ``of split type'' to leave the exact number $d$ undetermined, but still to indicate that the quadrics we use are split). Also, the members of $\Xi$ are sometimes called \emph{the quadratic spaces}.

\begin{itemize}
\item[(MM1)] Any pair of points $x$ and $y$ of $X$ lies in at least one element of $\Xi$, denoted by $[x,y]$ if unique.

\item[(MM2)] If $\xi_1,\xi_2\in \Xi$, with $\xi_1\neq \xi_2$, then $\xi_1\cap\xi_2\subset X$.

\item[(MM3)] If $x\in X$, then all $d$-spaces $T_x(\xi)$, $x\in\xi\in\Xi$, generate a subspace $T_x$  of $\PAG^{N}(\mathbb{K})$ of dimension at most $2d$.

\end{itemize}

The central problem of this paper is to classify all Mazzocca-Melone sets of split type $d$, for all $d\geq 1$. We will state this classification in Section~\ref{sec:main result}, after we have introduced the examples in the next section.

We end this introduction by mentioning that the first case of non-existence, namely $d=3$, was proved in \cite{JSHVM4} in the context of the first cell of the third row of the FTMS. So we can state the following proposition.

\begin{prop}\label{d=3}
A proper Mazzocca-Melone set of split type $3$ does not exist.
\end{prop}

\begin{proof}
This is basically Proposition 5.4 of \cite{JSHVM4}. That proposition states that no Lagrangian set of diameter 2 exists. A Lagrangian set $(X,\Xi)$ satisfies the same axioms as a Mazzocca-Melone set of split type 3, except that (MM1) is replaced by (MM1$''$) below. The diameter of a Langrangian set is defined as the diameter of the following graph $\Gamma$. The vertices are the elements of $X$ and two vertices are adjacent if they are contained in a singular line of $X$ (a line fully contained in $X$). 
\begin{itemize}
\item[(MM1$''$)] Any pair of points $x$ and $y$ of $X$ at distance at most 2 in the graph $\Gamma$ is contained in at least one element of $\Xi$.
\end{itemize}
By Axiom (MM1), any Mazzocca-Melone set of split type 3 has diameter $2$ and hence is a Langangian set of diameter 2, whic does not exist by Proposition 5.4 of \cite{JSHVM4}.
\end{proof}

As a side remark, we note that a similar conjecture as above for the third row can be stated,  supported by a recent result of  De Bruyn and the second author \cite{BDB-HVM}. Some other work that is related to our axiomatic approach, but still with a restriction on the underlying field, has been carried out by Russo \cite{Rus:09} and by Ionescu \& Russo \cite{Ion-Rus:08,Ion-Rus:10}.

\subsection{Examples of Mazzocca-Melone sets of split type}\label{examples}


We define some classes of varieties over the arbitrary field $\K$. Each class contains Mazzocca-Melone sets of split type. This section is meant to introduce the notation used in the statements of our results.  

\textbf{Quadric Veronese varieties |}
The \emph{quadric Veronese variety} $\mathcal{V}_{n}(\K)$, $n\geq 1$, is the set of points in $\PAG^{{n+2 \choose 2} - 1}(\K)$
obtained by taking the images of all points of $\PAG^n(\K)$ under the Veronese mapping, which maps the point  $(x_{0},\cdots,x_{n})$ of $\PAG^n(\K)$ to the point $(x_{i}x_{j})_{0\leq i\leq j \leq n}$ of $\PAG^{{n+2 \choose 2} - 1}(\K)$. If $\K=\C$, then this is a smooth non-degenerate complex algebraic variety of dimension $n$.

\textbf{Line Grassmannian varieties |}
The \emph{line Grassmannian variety} $\mathcal{G}_{m,1}(\K)$, $m\geq 2$, of $\PAG^{m}(\K)$ is the set of points of $\PAG^{\frac{m^2+m-2}{2}}(\K)$ obtained by taking the images of all lines of $\PAG^{m}(\K)$ under the Pl\"ucker map $$\rho (\<(x_0,x_1,\ldots,x_m),(y_0,y_1,\ldots,y_m)\>)=\left(\left|\begin{array}{cc} x_i & x_j\\ y_i & y_j\end{array}\right|\right)_{0\leq i< j\leq m}.$$ If $\K=\C$, then this is  a smooth non-degenerate complex algebraic variety of dimension $2m-2$.

\textbf{Segre varieties |}
The \emph{Segre variety} $\mathcal{S}_{k,\ell}(\K)$ of $\PAG^{k}(\K)$ and $\PAG^{\ell}(\K)$ is the set of points of $\PAG^{k\ell+k+\ell}(\K)$ obtained by taking the images of all pairs of points, one in $\PAG^{k}(\K)$ and one in $\PAG^{\ell}(\K)$, under the Segre map $$\sigma (\<(x_0,x_1,\ldots,x_k),(y_0,y_1,\ldots,y_\ell)\>)=(x_iy_j)_{0\leq i\leq k;0\leq j\leq\ell}.$$
If $\K=\C$, then this is a smooth non-degenerate complex variety of dimension $k+\ell$.

\textbf{Varieties from split quadrics |}
Every split quadric is by definition a variety, which we will refer to as the \emph{variety corresponding to a split quadric}. The one corresponding to a \emph{parabolic quadric} in $\PAG^{2n}(\K)$ will be denoted by $\mathcal{B}_{n,1}(\K)$; the one corresponding to a \emph{hyperbolic quadric} in $\PAG^{2n-1}(\K)$ by $\mathcal{D}_{n,1}(\K)$.

\textbf{Half-spin varieties |}
The exposition below is largely based on \cite{IM}, but the results are due to Chevalley \cite{Chevalley}, see also the recent reference \cite{Manivel}.

Let $V$ be a vector space of dimension $2n$ over a field $\mathbb{K}$ with a nonsingular quadratic form $q$ of maximal Witt index giving rise to a hyperbolic quadric and $(\cdot,\cdot)$ the associated bilinear form.
Then the maximal singular subspaces of $V$ with respect to $q$ have dimension $n$ and fall into two classes $\Sigma^{+}$ and $\Sigma^{-}$ so that two subspaces have an intersection of even codimension if and only if they are of the same type.

Then we define the half spin geometry $\mathcal{D}_{n,n}(\K)$ as follows. The point set $\mathcal{P}$ is the set of maximal totally singular subspaces of $V$ of one particular type, say +. For each totally singular $(n-2)$-space $U$, form a line by taking all the points containing $U$. The line set $\mathcal{L}$ is the collection of sets of this form.

Fix a pair of maximal isotropic subspaces $U_0$ and $U_\infty$ such that $V=U_0\oplus U_\infty$.
Let $S=\bigwedge U_\infty$ be the exterior algebra of $U_\infty$, called the \emph{spinor space of $(V,q)$} and the even and odd parts
of $S$ are called the \emph{half-spinor spaces}, $S^+=\bigwedge^{\mathrm{even}} U_\infty$, $S^-=\bigwedge^{\mathrm{odd}} U_\infty$.

To each maximal isotropic subspace $U\in \Sigma^+\cup \Sigma^-$ one can associate a unique, up to proportionality, nonzero half-spinor $s_U \in S^+\cup S^-$ such that $\phi_u(s_U)=0$, for all $u\in U$, where $\phi_u \in \mbox{ End}(S)$ is the Clifford automorphism of $S$ associated to $U$:
$$\phi_u(v_1\wedge \cdots \wedge v_k)=\sum_i (-1)^{i-1} (u_0,v_i)v_1\wedge \cdots \wedge \hat{v_{i}} \wedge \cdots \wedge v_k+
u_\infty\wedge v_1 \wedge \cdots \wedge v_k,$$
where $u=u_0+u_\infty$, with $u_o\in U_0, u_\infty\in U_\infty$.

One can also obtain an explicit coordinate description, as well as a set of defining quadratic equations for the point set, see e.g. \cite{Chevalley,IM,Manivel,Mukai}. 

We shall refer to these varieties as the \emph{half-spin varieties $\mathcal{D}_{n,n}(\K)$.}


\textbf{The variety related to a building of type $\mathsf{E}_6$ |}
Let $V$ be a 27-dimensional vector space over a field $\mathbb{K}$ consisting of all ordered triples
$x=[x^{(1)},x^{(2)},x^{(3)}]$ of $3\times 3$ matrices $x^{(i)},1\leq i\leq 3$, where addition and scalar multiplication
are defined entrywise. Moreover the vector space is equipped with the cubic form $d:V\to \mathbb{K}$ given by
$$d(x)=\det x^{(1)}+\det x^{(2)}+\det x^{(3)}-\operatorname{Tr} (x^{(1)}x^{(2)}x^{(3)})$$
for $x\in V$,  see for instance Aschbacher's article \cite{Asch1}.

Let $x\in V$. The map $\partial_x d: V\to \mathbb{K}$ such that $\partial_x d(y)$ is the coefficient in the expansion of $d(x+ty)\in \mathbb{K}[t]$ as a polynomial in $t$ is called the \emph{derivative} of $d$ at $x$.
Define the adjoint square $x^\sharp$ of $x$ by $\partial_x(d)(y)=\operatorname{Tr}(x^\sharp y)$.
The \emph{variety $\mathcal{E}_{6,1}(\K)$} consists of the set of projective points $\langle x \rangle$ of $V$ for which $x^\sharp=0$.

\section{Main result}\label{sec:main result}

We can now state our main result.

\textbf{Main Result.} \emph{A proper Mazzocca-Melone set of split type $d\geq 1$ in $\PAG^{N}(\K)$  is projectively equivalent to one of the following:
\begin{itemize}
\item[$d=1$]\begin{itemize}
\item the quadric Veronese variety $\mathcal{V}_{2}(\K)$, and then $N=5$;
\end{itemize}
\item[$d=2$]\begin{itemize}
\item the Segre variety $\mathcal{S}_{1,2}(\K)$, and then $N=5$;
\item the Segre variety $\mathcal{S}_{1,3}(\K)$, and then $N=7$;
\item the Segre variety $\mathcal{S}_{2,2}(\K)$, and then $N=8$;
\end{itemize}
\item[$d=4$]
\begin{itemize}
\item the line Grassmannian variety $\mathcal{G}_{4,1}{(\K)}$, and then $N=9$;
\item the line Grassmannian variety $\mathcal{G}_{5,1}{(\K)}$, and then $N=14$;
\end{itemize}
\item[$d=6$]
\begin{itemize}
\item the half-spin variety $\mathcal{D}_{5,5}(\K)$, and then $N=15$;
\end{itemize}
\item[$d=8$]
\begin{itemize}
\item the variety $\mathcal{E}_{6,1}(\K)$, and then $N=26$;
\end{itemize}
\end{itemize}
}

\begin{rem}\rm If one includes the non-proper cases in the previous statement, then a striking similarity between these and the proper cases becomes apparent (and note that each symp of any proper example in the list is isomorphic to the non-proper Mazzocca-Melone set of the same split type). Indeed, for $d=1$, a conic is a Veronesean variety $\cV_1(\K)$; for $d=2$, a hyperbolic quadric in $3$-space is a Segre variety $\cS_{1,1}(\K)$;  for $d=4$, a hyperbolic quadric in $5$-space is the line Grassmannian variety $\mathcal{G}_{3,1}(\K)$; for $d=6$, the hyperbolic quadric in $7$-space is, by triality, also the half-spin variety $\mathcal{D}_{4,4}(\K)$; finally for $d=8$, the hyperbolic quadric in $9$-space is a variety $\mathcal{D}_{5,1}$, sometimes denoted $\mathcal{E}_{5,1}$ to emphasize the similarity between objects of type $\mathsf{D_5}$ and the ones of type $\mathsf{E_6},\mathsf{E_7}$ and $\mathsf{E_8}$. 
\end{rem}

The following corollary characterizes precisely the varieties related to the second row of the FTMS. It is obtained from our Main Result by either adding a restriction on the global dimension, or a restriction on the \emph{local} dimension, i.e., the dimension of at least one tangent space.

\textbf{Main Corollary.}  \emph{A Mazzocca-Melone set of split type $d$, with $d\geq 1$, in $\PAG^{N}(\K)$, with either $N\geq 3d+2$, or $\dim T_x=2d$ for at least one $x\in X$,  is projectively equivalent to one of the following:
\begin{itemize*}
\item the quadric Veronese variety $\mathcal{V}_{2}(\K)$, and then $d=1$;
\item the Segre variety $\mathcal{S}_{2,2}(\K)$, and then $d=2$;
\item the line Grassmannian variety $\mathcal{G}_{5,1}{(\K)}$, and then $d=4$;
\item the variety $\mathcal{E}_{6,1}(\K)$, and then $d=8$.
\end{itemize*}
In all cases $N=3d+2$ and $\dim T_x=2d$ for all $x \in X$.
}

\bigskip

Concerning the proof, the cases $d=1$, $d=2$  and $d=3$ of the Main Result are proved in \cite{JSHVM1}, \cite{JSHVM3} and Proposition~\ref{d=3}, respectively. So we may suppose $d\geq 4$. However, our proof is inductive, and in order to be able to use the cases $d\in\{1,2,3\}$, we will be forced to prove some results about sets only satisfying (MM1) and (MM2), which we will call \emph{pre-Mazzocca-Melone sets (of split type)}, and this will include $d=1,2,3$.



The rest of the paper is organized as follows. In the next section, we prove some general results about (pre-)Mazzocca-Melone sets, and use these in Section~\ref{exist} to finish the proof of the Main Result for all the cases corresponding with the varieties in the conclusion of the Main Result. In Section~\ref{notexist}, we prove the non-existence of proper Mazzocca-Melone sets for $d\notin\{1,2,4,6,8\}$, and in the last section, we verify the axioms. 

\section{General preliminary results for pre-Mazzocca-Melone sets}\label{sec:Segre}

We introduce some notation. Let $(X,\Xi)$ be a pre-Mazzocca-Melone set of split type $d$ in $\PAG^{N}(\K)$, $d\geq 1$. Axiom (MM1) implies that, for a given line $L$ of $\PAG^{N}(\K)$, either $0$, or $1$, or $2$ or all points of $L$ belong to $X$, and in the latter case $L$ is contained in a \EQ. In this case, we call $L$ a \emph{singular line}. More generally, if all points of a  $k$-space of $\PAG^{N}(\K)$ belong to $X$, then we call this $k$-space \emph{singular}. Two points of $X$ contained in a common singular line will be called \emph{$X$-collinear}, or simply \emph{collinear}, when there is no confusion. Note that there is a unique \EQ\ through a pair of non-collinear points (existence follows from (MM1) and uniqueness from (MM2)). A \emph{maximal} singular subspace is one that is not properly contained in another.

The linear span of a set $S$ of points in $\PAG^{N}(\K)$ will be denoted by $\<S\>$.

We now have the following lemmas.

\begin{lemma}[The Quadrangle Lemma] Let $L_1,L_2,L_3,L_4$ be four (not necessarily pairwise distinct) singular lines such that $L_i$ and $L_{i+1}$ (where $L_{5}=L_{1}$) share a (not necessarily unique) point $p_{i}$, $i=1,2,3,4$, and suppose that $p_1$ and $p_3$ are not $X$-collinear. Then $L_1,L_2,L_3,L_4$ are contained in a unique common \EQ.
\end{lemma}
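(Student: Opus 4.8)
The plan is to take as candidate \EQ\ the unique $\xi:=[p_1,p_3]$ determined by the non-collinear diagonal pair $p_1,p_3$ (it exists by (MM1) and is unique by (MM2), as noted in the text), and to show that all four corners $p_1,p_2,p_3,p_4$ lie in $\xi$. Once this is done the proof finishes quickly: since $\xi$ is a projective subspace, each side $L_i$, joining two of these corners, is contained in $\xi$, and being singular it then lies on the quadric $X(\xi)$; hence $L_1,L_2,L_3,L_4\subseteq\xi$. Uniqueness is immediate, since any \EQ\ containing the four lines contains the non-collinear pair $p_1,p_3$ and therefore equals $[p_1,p_3]=\xi$. The cases allowed by ``not necessarily pairwise distinct'' cause no trouble: if any side were the line $p_1p_3$, or if $p_2$ or $p_4$ equalled $p_1$ or $p_3$, then $p_1p_3$ would be singular, contradicting non-collinearity of $p_1,p_3$. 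As $p_1,p_3\in\xi$ by construction, everything reduces to the following claim, applied first to $y=p_2$ and then to $y=p_4$.

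\emph{Claim: if a point $y$ is $X$-collinear to two non-collinear points $a,b$ of a \EQ\ $\xi$, then $y\in\xi$.} I would argue by contradiction, assuming $y\notin\xi$. Work in the plane $\pi:=\<a,y,b\>$ (genuine, since $y\notin ab$ because $ya$ is singular while $ab$ is not): here $ya$ and $yb$ are singular, the line $ab$ is secant to $X(\xi)$ and meets $X$ only in $a,b$, and $\pi\cap\xi=ab$. Let $\eta$ be a \EQ\ containing the singular line $ya$ and $\zeta$ one containing $yb$. A short check using (MM1) and the uniqueness of $[a,b]$ shows that $\eta,\zeta,\xi$ are pairwise distinct and that $b\notin\eta$, $a\notin\zeta$: for instance if $\eta=\xi$ then $y\in\xi$, if $\eta=\zeta$ then $\eta$ contains the non-collinear pair $a,b$ whence $\eta=\xi$ and again $y\in\xi$, and if $b\in\eta$ then $\eta\ni a,b$ forces $\eta=\xi$. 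By (MM2) the three pairwise intersections $A:=\eta\cap\xi$, $B:=\zeta\cap\xi$ and $C:=\eta\cap\zeta$ are singular subspaces contained in $X$, with $a\in A$, $b\in B$ and $y\in C$.

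The heart of the matter, and the step I expect to be the main obstacle, is to convert this incidence picture into a contradiction with the non-collinearity of $a$ and $b$. Invariantly, the Claim is equivalent to the assertion that for a point $y$ outside a \EQ\ $\xi$ the trace $\{q\in X(\xi): q\text{ is }X\text{-collinear to }y\}$ is a \emph{singular} subspace (a clique); since $a$ and $b$ both lie in this trace, they would then be collinear, which is the desired contradiction. The delicate point is to establish this ``perp-is-singular'' property using only (MM1), (MM2) and the non-singular split quadric structure, since (MM3) is not available for pre-Mazzocca-Melone sets. My approach would be to exploit the chain of \EQ s $\xi,\eta,\zeta$ and their (MM2)-intersections $A,B,C$: using repeatedly that a line meets $X$ in $0$, $1$, $2$ or all of its points, I would control how $X$ meets the planes and small subspaces spanned by $A$, $B$, $C$ and the lines $ya$, $yb$, and reduce the question to the behaviour of a single non-singular quadric inside $\eta$ (respectively $\zeta$), where a singular line joining $a$ to $b$ is forced. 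Producing that line cleanly, rather than merely a chain of collinearities, is the technical crux of the lemma.
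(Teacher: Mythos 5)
Your reduction is sound as far as it goes: taking $\xi=[p_1,p_3]$, handling the degenerate coincidences, and observing that everything follows once the four corners lie in $\xi$ is all correct. But the entire mathematical content of the lemma is then concentrated in your Claim (a point $X$-collinear to two non-collinear points $a,b$ of a \EQ\ $\xi$ must lie in $\xi$), and you do not prove it — you set up the symps $\eta\supseteq\<y,a\>$, $\zeta\supseteq\<y,b\>$ and the (MM2)-intersections $A,B,C$, and then explicitly defer "the technical crux." That crux is not a routine verification: your Claim is essentially equivalent to the Quadrangle Lemma itself (it is exactly the "perp-is-singular" statement that the paper later derives, as Lemma~\ref{subspace}, \emph{from} the Quadrangle Lemma), so the reduction has not made the problem easier, and the incidence picture with $\eta,\zeta,A,B,C$ does not by itself force a singular line joining $a$ to $b$ — nothing prevents, at this level of information, the three symps from pairwise meeting in small singular subspaces with no contradiction.

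The idea you are missing is the paper's key trick, which bypasses any perp analysis: since $\<p_1,p_3\>$ is not singular, it contains a point $p\notin X$. Work in the plane $\<L_2,L_3\>$, which contains $p_1,p_2,p_3$ and hence the diagonal and $p$. Choose two \emph{distinct} lines $M_2,M_3$ through $p$ in this plane; each $M_i$ meets $L_2$ and $L_3$ in points of $X$, and each $M_i$ is non-singular (it contains $p$), so by (MM1) and uniqueness there is a unique \EQ\ $\xi_i\supseteq M_i$. If $\xi_2\neq\xi_3$, then (MM2) forces $p\in\xi_2\cap\xi_3\subseteq X$, a contradiction; hence $\xi_2=\xi_3$, and this single \EQ\ contains two distinct points of $L_2$ and two of $L_3$, hence contains both lines, hence contains the non-collinear pair $p_1,p_3$ and equals $[p_1,p_3]=\xi$. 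Repeating in the plane $\<L_4,L_1\>$ gives $L_4,L_1\subseteq\xi$. (The same device, applied in the plane $\<a,y,b\>$ with $p$ a third point of the non-singular line $\<a,b\>$, would also prove your Claim directly — which shows the trick, not the reduction, is where the lemma lives.)
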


\begin{proof}
Since $\<p_1,p_3\>$ is not singular, we can pick a point $p\in\<p_1,p_3\>$ which does not belong to $X$. Let $\xi$ be the unique \EQ\ containing $p_1$ and $p_3$. We choose two arbitrary  but distinct lines $M_2,M_3$ through $p$ inside the plane $\<L_2,L_3\>$. Denote $M_i\cap L_j=\{p_{ij}\}$, $\{i,j\}\subseteq\{2,3\}$. By (MM1) there is a \EQ\ $\xi_i$ containing $p_{i2}$ and $p_{i3}$, $i=2,3$.   If $\xi_2\neq \xi_3$, then (MM2) implies that $p$, which is contained in $\xi_2\cap\xi_3$, belongs to $X$, a contradiction. Hence $\xi_2=\xi_3=\xi$ and contains $L_2,L_3$. We conclude $\xi$ contains $L_2,L_3$, and similarly also $L_4,L_1$.
\end{proof}

Note that, if in the previous lemma all lines are different, then they span a $3$-space; this just follows from the lemma a posteriori.

\begin{lemma}\label{subspace}
Let $p\in X$ and let $H$ be a symp not containing $p$. Then the set of points of $H$ collinear with $p$  is either empty or constitutes a singular subspace of $H$.
\end{lemma}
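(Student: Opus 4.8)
The plan is to prove that the set $Y$ of points of $H$ collinear with $p$ is, whenever nonempty, closed under joining any two of its points by the (singular) line through them, and that every such line lies in $H$; this says exactly that $Y$ is a singular subspace of $H$. A preliminary observation I would use throughout is that two points of $H$ are $X$-collinear if and only if they are collinear inside the quadric $H$: the line joining two points of $H$ lies in $\xi$, so it is singular precisely when it lies in $\xi\cap X=H$. I would also dispose of $d=1$ at once: there $H$ is a conic, a line meets it in at most two points, so there are no singular lines at all and $Y=\emptyset$. From now on $d\geq 2$, so $H$ is a split quadric of Witt index at least two.

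\emph{First claim: any two distinct $q_1,q_2\in Y$ are $X$-collinear.} Suppose not. Since $H$ has Witt index at least two, the non-collinear points $q_1,q_2$ admit a common neighbour $m\in H\setminus\{q_1,q_2\}$, producing singular lines $q_1m,\,mq_2$ of $H$; together with the singular lines $pq_1,\,pq_2$ coming from $q_1,q_2\in Y$ they form the quadrangle $pq_1,\,q_1m,\,mq_2,\,q_2p$ whose opposite corners $q_1,q_2$ are not $X$-collinear. The Quadrangle Lemma then places all four lines, and in particular $p$, in a single symp $\zeta$ containing the non-collinear pair $q_1,q_2$. As the symp through two non-collinear points is unique, $\zeta=\xi$, forcing $p\in\xi\cap X=H$ and contradicting $p\notin H$.

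\emph{Second claim (the main obstacle): for distinct $q_1,q_2\in Y$ the whole singular line $\ell=q_1q_2$ lies in $Y$.} The difficulty is that $pq_1,pq_2,\ell$ form a triangle of pairwise collinear points, so no quadrangle assembled naively from them has the non-collinear opposite pair that the Quadrangle Lemma requires. I resolve this by contradiction. Fix $r\in\ell\setminus\{q_1,q_2\}$ (note $r\neq p$ since $p\notin\xi\supseteq\ell$) and assume $p$ is not collinear with $r$. Then $p$ and $r$ are themselves a non-collinear opposite pair of the degenerate quadrangle with consecutive sides $q_2p,\,pq_1,\,q_1r,\,rq_2$ — the last two lying on $\ell$ — and corners $p,q_1,r,q_2$. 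The Quadrangle Lemma puts $p,q_1,q_2,r$ in one symp $\zeta$ with $p\in\zeta$ and $\ell\subseteq\zeta$. Inside the polar space $\zeta$, a point $x$ is collinear with $p$ (or equal to it) precisely when the bilinear pairing $(p,x)$ vanishes; this linear condition holds at $q_1$ and $q_2$, hence on all of $\ell=\langle q_1,q_2\rangle$, in particular at $r$. Thus $p$ is collinear with $r$ after all, a contradiction, so $r\in Y$.

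Combining the two claims, a nonempty $Y$ consists of pairwise $X$-collinear points and contains, together with any two of its points, the entire singular line joining them; by the preliminary observation each such line lies in $H$. Therefore $Y$ is a subspace all of whose points and joining lines lie in $H\subseteq X$, that is, a singular subspace of $H$, as required.
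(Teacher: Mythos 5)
Your proof is correct and takes essentially the same route as the paper's: your two claims (pairwise collinearity of the points of $H$ collinear with $p$, via a common neighbour, and then closure along the line, via a degenerate quadrangle) are exactly the paper's two applications of the Quadrangle Lemma, with the same quadrangles. The only cosmetic difference is the finish of the second step, where the paper concludes that the plane $\langle p,q_1,q_2\rangle$ lies in a symp and hence is singular, while you invoke linearity of the bilinear form of the symp to get collinearity of $p$ with $r$ directly; both work, in all characteristics.
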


\begin{proof}
Suppose first that $p$ is $X$-collinear with two non-collinear points $x_1,x_2\in H$, and let $x\in H$ be collinear with both $x_1$ and $x_2$. Then the Quadrangle Lemma applied to $p,x_1,x,x_2$ implies that $p\in H$, a contradiction. Hence all points of $H$ collinear with $p$ are contained in a singular subspace. Suppose now that $p$ is collinear with two collinear points $y_1,y_2\in H$ and let $y$ be a point on the line $\<y_1,y_2\>$. If we assume that $p$ and $y$ are not collinear, then the Quadrangle Lemma applied to $p,y_1,y,y_2$ yields that the plane $\<p,y_1,y_2\>$ is entirely contained in a symp, contradicting the non-collinearity of $p$ and $y$. Hence $p$ and $y$ are collinear and the lemma is proved.
\end{proof}

\begin{rem}\rm Note that, up to now, we did not use the part of (MM1) that says that two \emph{collinear} points of $X$ are contained in some symp. In fact, this follows from the previous lemmas. Indeed, let $x,y$ be two $X$-collinear points and suppose that they are not contained in any common symp. By considering an arbitrary symp, we can, using Lemma~\ref{subspace}, find a point not collinear to $y$ and hence a symp $H$ containing $y$ (but not $x$, by assumption). Let $z$ be a point of $H$ collinear with $y$ but not collinear with $x$ (and $z$ exists by Lemma~\ref{subspace}). The Quadrangle Lemma applied to $x,y,z,y$ now implies that  the (unique) symp $X([x,z])$ contains $y$. Hence we can relax (MM1) by restricting to pairs of points that are not $X$-collinear. For ease of reference, we have not included this minor reduction in the axioms.
\end{rem}

\begin{lemma} \label{lemma1} A pair of singular $k$-spaces, $k\geq 0$, that intersect in a singular $(k-1)$-space is either contained in a \EQ, or in a singular $(k+1)$-space. In particular, if $k>\lfloor \frac{d}{2} \rfloor$, then such a pair is always contained in a singular $(k+1)$-space.
\end{lemma}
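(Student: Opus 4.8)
The plan is to set $P=\<A,B\>$ and run a dichotomy on collinearity. Writing $A,B$ for the two singular $k$-spaces and $C=A\cap B$ for the common singular $(k-1)$-space, a dimension count gives $\dim P=\dim A+\dim B-\dim C=k+1$, so it suffices to decide whether $P$ is singular or whether $A\cup B$ lies in a single \EQ. Since $A\setminus C$ and $B\setminus C$ are nonempty, I would fix representatives and distinguish two cases according to whether \emph{some} pair $a\in A\setminus C$, $b\in B\setminus C$ is non-collinear, or \emph{every} such pair is collinear.

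In the first case, let $\xi$ be the unique \EQ\ through the non-collinear pair $a,b$ (unique by (MM1)--(MM2)). The key step is to show $C\subseteq\xi$: any $c\in C$ lies in both $A$ and $B$, hence is collinear with $a$ (inside the singular space $A$) and with $b$ (inside $B$); were $c\notin X(\xi)$, Lemma~\ref{subspace} would force the points of $X(\xi)$ collinear with $c$ to form a singular subspace containing the non-collinear points $a,b$, which is absurd. Thus $C\subseteq X(\xi)\subseteq\xi$, and since $a,b\in\xi$ we get $A=\<C,a\>\subseteq\xi$ and $B=\<C,b\>\subseteq\xi$; as $A,B\subseteq X$ are singular, they lie in $X(\xi)=\xi\cap X$, so the pair is contained in a \EQ.

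In the second case, every point of $A\setminus C$ is collinear with a fixed $b\in B\setminus C$; moreover every point of $C\subseteq B$ is collinear with $b$ inside $B$, so $b$ is collinear with \emph{all} of $A$, while $b\notin A$ (as $b\notin C=A\cap B$). Then $P=\<A,b\>$ is the cone with vertex $b$ over $A$: every point $p\in P\setminus A$ lies on the line $\<p,b\>$, which meets $A$ in a point $a'$, and $\<a',b\>$ is singular because $a'$ and $b$ are collinear; hence $p\in X$. Therefore $P$ is a singular $(k+1)$-space containing $A$ and $B$.

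For the ``in particular'' clause I would invoke the standard fact that a non-singular split quadric in $\PG(d+1)$ has maximal singular subspaces of projective dimension exactly $\lfloor d/2\rfloor$ (projective dimension $n-1$ for a parabolic quadric in $\PG(2n)$ and $n$ for a hyperbolic quadric in $\PG(2n+1)$, both equal to $\lfloor d/2\rfloor$ when $d+1$ is the ambient dimension). Consequently, when $k>\lfloor d/2\rfloor$ no \EQ\ can contain the singular $k$-space $A$, so the first case is impossible and the pair must lie in a singular $(k+1)$-space. The only delicate point is the application of Lemma~\ref{subspace} in the first case; once $C\subseteq\xi$ is secured the rest is bookkeeping, and the degenerate value $k=0$ (where $C$ is empty and the statement reduces to (MM1)) is covered automatically.
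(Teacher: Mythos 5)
Your proof is correct, but it takes a genuinely different route from the paper's. The paper splits on whether the span $\<A,B\>$ is singular: if not, it picks a point $p\in\<A,B\>$ not in $X$, cuts the line $\<p,p_A\>$ (for $p_A\in A\setminus B$) with $B$ to manufacture a non-collinear pair $p_A,p_B$, and then, for each further point $q_A\in A\setminus B$, applies the Quadrangle Lemma (taking $L_1=L_2$ and $L_3=L_4$, namely the lines $\<p_A,q_A\>$ and $\<q,p_B\>$ with $q=\<p_A,q_A\>\cap B$) to pull all of $A$, and then all of $B$, into the unique \EQ\ through $p_A,p_B$; the cases $k=0,1$ are disposed of separately via (MM1) and the Quadrangle Lemma. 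You instead split on whether some cross pair $a\in A\setminus C$, $b\in B\setminus C$ is non-collinear, and your key tool in that case is Lemma~\ref{subspace} rather than the Quadrangle Lemma: since $a,b$ are non-collinear points of $X(\xi)$ both collinear with any $c\in C$, the perp of $c$ in $X(\xi)$ could not be a singular subspace if $c\notin\xi$, so $C\subseteq\xi$ and $A=\<C,a\>$, $B=\<C,b\>$ land in $\xi$ by taking spans; in the all-collinear case your cone argument with vertex $b$ is elementary and sound ($\<A,B\>=\<A,b\>$, and every point off $A$ lies on a singular line $\<a',b\>$). What your approach buys: it is uniform in $k\geq 0$ (no separate small-$k$ cases), and it makes explicit the ``in particular'' clause, which the paper leaves implicit --- your count of $\lfloor d/2\rfloor$ for the dimension of maximal singular subspaces of both parabolic and hyperbolic split quadrics is accurate. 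What the paper's approach buys: it rests directly on a single tool (the Quadrangle Lemma, which also underlies Lemma~\ref{subspace}, so both proofs ultimately trace back to it), and it shows en route the slightly sharper fact that whenever $\<A,B\>$ fails to be singular, the two $k$-spaces lie in one common symp.
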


\begin{proof} If $k=0$, then this follows from (MM1). If $k=1$, this follows from the Quadrangle Lemma. Now let $k\geq 2$. Suppose $A,B$ are singular $k$-spaces intersecting in a singular $(k-1)$-space and suppose that $C:=\<A,B\>$ is not singular. Then there is a point $p\in C$ which does not belong to $X$. Choose a point $p_A\in A\setminus B$ and let $p_B$ be the intersection of $\<p,p_A\>$ and $B$. Then there is a unique \EQ\ $\xi$ containing $p_A$ and $p_B$. Let $q_A$ be any other point of $A\setminus B$ and put $q=\<p_A,q_A\>\cap B$. Then the Quadrangle Lemma (with $L_1=L_2$ and $L_3=L_4$) implies that $q_A\in\xi$. Hence $A\subseteq \xi$ and similarly $B\subseteq\xi$.
 \end{proof}

\begin{lemma}\label{lemma2} Every singular $k$-space, $k\leq \lfloor \frac{d}{2} \rfloor$, which is contained in a finite-dimensional maximal singular subspace, is contained in a \EQ.
\end{lemma}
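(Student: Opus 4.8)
The plan is to prove the statement by induction on $k$, using Lemma~\ref{lemma1} as the main engine. Fix a finite-dimensional maximal singular subspace $M$ with $S\subseteq M$ and $\dim S=k\le\lfloor d/2\rfloor$. For the base case $k=0$, the space $S$ is a single point; since $X$ spans $\PAG^N(\K)$ with $N\ge d+1\ge 2$, there is a second point of $X$, and by (MM1) (together with the Remark following Lemma~\ref{subspace}, which handles the collinear case) this point lies in a \EQ. For the inductive step, I first choose a hyperplane $S'$ of $S$, that is, a singular $(k-1)$-space. Since $S'\subseteq S\subseteq M$ and $M$ is a finite-dimensional maximal singular subspace, the induction hypothesis applies to $S'$ and yields a \EQ\ $\xi$ with $S'\subseteq\xi$. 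If $S\subseteq\xi$ we are done, so I may assume $S\cap\xi=S'$ (the intersection is a subspace containing the $(k-1)$-space $S'$ but not equal to the $k$-space $S$).

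Next I produce two suitable singular $k$-spaces inside $\xi$. The totally singular $k$-spaces $T$ with $S'\subset T\subseteq X(\xi)$ correspond, via the standard residue (quotient) construction for a non-singular quadric, to the points of the residual quadric $\bar{Q}$ obtained by factoring $X(\xi)$ by $S'$; this $\bar{Q}$ is a non-singular split quadric in a projective space of dimension $d+1-2k$, and two such $k$-spaces $T,T_2$ have singular span $\<T,T_2\>$ if and only if the corresponding points of $\bar{Q}$ are collinear on $\bar{Q}$. Because $k\le\lfloor d/2\rfloor$, the ambient dimension $d+1-2k$ is at least $1$, so $\bar{Q}$ carries at least two non-collinear points; I select $T,T_2\supset S'$ with $\<T,T_2\>$ \emph{non-singular}. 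Note each such $T\subseteq\xi$ satisfies $T\cap S\subseteq\xi\cap S=S'$, hence $T\cap S=S'$, so $S$ and $T$ are singular $k$-spaces meeting in the singular $(k-1)$-space $S'$.

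Now I apply Lemma~\ref{lemma1} twice. For each of $T,T_2$, the pair $(S,T)$ is a pair of singular $k$-spaces meeting in the singular $(k-1)$-space $S'$, so $\<S,T\>$ is either contained in a \EQ\ or is a singular $(k+1)$-space. If either pair lands in a \EQ, that \EQ\ contains $S$ and we are finished. Otherwise both $S^{+}:=\<S,T\>$ and $S_2^{+}:=\<S,T_2\>$ are singular $(k+1)$-spaces. They are distinct, since $S^{+}=S_2^{+}$ would force the non-singular space $\<T,T_2\>$ into a singular subspace; being distinct $(k+1)$-spaces both containing the $k$-space $S$, they meet exactly in $S$. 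Applying Lemma~\ref{lemma1} to the pair $(S^{+},S_2^{+})$, their span $\<S^{+},S_2^{+}\>=\<S,T,T_2\>$ is either contained in a \EQ\ or is a singular $(k+2)$-space; but it contains the non-singular space $\<T,T_2\>$ and is therefore non-singular, so it must lie in a \EQ\ $\eta$, and $S\subseteq S^{+}\subseteq\eta$.

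The main obstacle is the middle step: guaranteeing that $\xi$ contains two singular $k$-spaces through $S'$ whose span is non-singular. This is exactly the point where the hypothesis $k\le\lfloor d/2\rfloor$ is indispensable, since it is what keeps the residual split quadric $\bar{Q}$ from degenerating to a single point and thereby forces the existence of non-collinear points; the finite-dimensional maximal singular subspace $M$ enters only to transfer the inductive hypothesis from $S$ to its subspace $S'$. The remaining verifications (that $T,T_2$ are genuinely distinct from $S$, that the various intersections have the claimed dimensions, and that the residual quadric is split of the stated dimension) are routine facts about non-singular quadrics and their residues.
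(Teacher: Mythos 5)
Your proof is correct, but it takes a genuinely different route from the paper's. The paper argues by maximality: it takes a subspace $S$ of the given singular $k$-space $A$ of maximal dimension among those lying in a \EQ\ $H$, assumes $S\neq A$, picks a singular $(\dim S+1)$-space $B_1\subseteq H$ through $S$ not contained in the maximal singular subspace $M$, and then, along a flag $A_1\subseteq\cdots\subseteq A_\ell=M$, uses Lemma~\ref{lemma1} repeatedly (the ``contained in a \EQ'' alternative being excluded each time by the maximality of $S$) to show that every $B_i=\<A_{i-1},B_1\>$ is singular; the last one properly contains $M$, contradicting the maximality of $M$. Thus both the finite-dimensionality of $M$ (so the chain terminates) and its maximality are essential to the paper's argument. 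You instead induct on $k$: the inductive hypothesis yields a \EQ\ $\xi$ through a hyperplane $S'$ of $S$, the residual quadric of $S'$ in $X(\xi)$ supplies two singular $k$-spaces $T,T_2\subseteq\xi$ through $S'$ with $\<T,T_2\>$ non-singular, and three applications of Lemma~\ref{lemma1} finish, the key point being that the non-singular space $\<T,T_2\>$ blocks the ``singular span'' alternative in the last application. (A cosmetic remark: when $k=\lfloor\frac{d}{2}\rfloor$, the ``contained in a \EQ'' alternative for the pair of singular $(k+1)$-spaces is itself impossible, since \EQ s carry no singular $(k+1)$-spaces; that branch of your case distinction is then vacuous, but your conclusion follows in every branch, so the logic is unaffected.) What the comparison buys: as you yourself observe, your argument uses the hypothesis about a finite-dimensional maximal singular subspace only to carry the induction hypothesis, hence never substantively, so you in fact prove the stronger statement that in any pre-Mazzocca-Melone set every singular $k$-space with $k\leq\lfloor\frac{d}{2}\rfloor$ lies in a \EQ; the paper's extra hypothesis is an artifact of its proof technique (harmless in the paper, since the lemma is only invoked in situations where tangent spaces, and hence maximal singular subspaces, are finite-dimensional). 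Conversely, the paper's proof avoids a formal induction and the explicit residual-quadric correspondence, although it implicitly relies on the same quadric fact (two extensions of $S$ inside $H$ whose span is non-singular) to produce $B_1$.
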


\begin{proof}Clearly, the lemma is true for $k=0,1$. Now let $k\geq 2$.

Let $A$ be a singular $k$-space contained in the finite-dimensional maximal singular subspace $M$. Note that, in principle, the dimension of $M$ should not necessarily be larger than $\lfloor \frac{d}{2} \rfloor$. Let $S$ be a subspace of $A$ of maximal dimension with the property that it is contained in some \EQ\ $H$. We may assume $S\neq A$, as otherwise we are done. 
In $H$, the subspace $S$ is not a maximal subspace as this would imply that $\lfloor \frac{d}{2} \rfloor = \dim S<\dim A=k\leq \lfloor \frac{d}{2} \rfloor$, a contradiction. Hence, since $H$ is a polar space, we can find two singular subspaces $B_1$ and $B_2$ of $H$, containing $S$, having dimension $\dim S +1$ and such that $B_1$ and $B_2$ are not contained in a common singular subspace of $H$. It then follows that $B_1$ and $B_2$ are not contained in a common singular subspace of $X$, as $\<B_1,B_2\>\subseteq \<H\>$ and since $H=\<H\>\cap X$, this would imply that $\<B_1,B_2\>$ is a singular subspace of $H$, a contradiction. Hence at most one of $B_1,B_2$ belongs to $A$, and so we may assume that $B_1$ is not contained in $A$. Consequently, $B_1$ is a singular subspace with dimension $\dim(S)+1$ contained in $H$, containing $S$ and not contained in $M$. Put $\ell=\dim(M)-\dim(A)+1$ and let $A_1\subseteq A_2\subseteq \cdots\subseteq A_\ell$ be a family of nested subspaces, with $\dim(A_i)=\dim(S)+i$, for all $i\in\{1,2,\ldots,\ell\}$, with $A_\ell=M$ and $S\subseteq A_1\subseteq A$. Put $B_i=\<A_{i-1},B_1\>$, $i=2,3,\ldots,\ell+1$. By the maximality of $S$, Lemma~\ref{lemma1} implies that $B_2$ is a singular subspace. Let  $2\leq i\leq\ell$. If $B_i$ is singular, then, since $B_i\cap A_i=A_{i-1}\supseteq A_1$ and $A_1$ is not contained in a symp, Lemma~\ref{lemma1} again implies that $\<A_i,B_i\>=\<A_i,B_1\>=B_{i+1}$ is a singular subspace. Inductively, this implies that $B_{\ell+1}$, which properly contains $M$, is a singular subspace. This contradicts the maximality of $M$. Hence $S=A$ and the assertion follows. 
\end{proof}

The next lemma suggests an inductive approach.

\begin{lemma}\label{induction}
Suppose $(X,\Xi)$ is a pre-Mazzocca-Melone set of split type $d$, $d\geq 4$. Let $x\in X$ be arbitrary and assume that $T_x$ is finite-dimensional. Let $C_x$ be a subspace of $T_x$ of dimension $\dim(T_x)-1$ not containing $x$. Consider the set $X_x$ of points of $C_x$ which are contained in  a singular line of $X$ through $x$. Let $\Xi_x$ be the collection of subspaces of $C_x$ obtained by intersecting $C_x$ with all $T_x(\xi)$, for $\xi$ running through all \EQ s of $X$ containing $x$. Then $(X_x,\Xi_x)$ is a pre-Mazzocca-Melone set of split type $d-2$.
\end{lemma}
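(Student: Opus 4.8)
The plan is to realize $(X_x,\Xi_x)$ as the residue (tangent geometry) of $(X,\Xi)$ at $x$, and to push the axioms through the two natural correspondences $\bar p\longleftrightarrow\<x,\bar p\>$ and $\xi\longmapsto C_x\cap T_x(\xi)$. First I would record three structural facts. (i) Every singular line through $x$ lies in $T_x$: if $L$ is such a line and $p\in L\setminus\{x\}$, then by (MM1) some $\xi\in\Xi$ contains $x$ and $p$, whence $L=\<x,p\>\subseteq\xi$, so $L\subseteq X(\xi)$ is a generator through $x$ and $L\subseteq T_x(\xi)\subseteq T_x$. Thus $\bar p\in C_x$ lies in $X_x$ iff $\<x,\bar p\>$ is singular, and $\bar p\mapsto\<x,\bar p\>$ is a bijection between $X_x$ and the singular lines through $x$. (ii) For $\xi\ni x$ the tangent hyperplane satisfies $T_x(\xi)\subseteq\xi$ and has projective dimension $d$; since $x\in T_x(\xi)\setminus C_x$, the intersection $C_x\cap T_x(\xi)$ is a hyperplane of $T_x(\xi)$ of dimension $d-1=(d-2)+1$, the correct dimension for split type $d-2$. (iii) Because the tangent cone of a non-singular quadric spans its tangent space, $T_x$ is spanned by the singular lines through $x$; intersecting these with $C_x$ gives $\<X_x\>=C_x$, so $(X_x,\Xi_x)$ spans $C_x$.

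Next I would identify the quadrics. Fix $\xi\ni x$. A point $\bar p\in C_x\cap T_x(\xi)$ lies in $X_x$ iff $\<x,\bar p\>\subseteq T_x(\xi)\subseteq\xi$ is singular, i.e. iff it is a generator of the tangent cone $X(\xi)\cap T_x(\xi)$. Hence $X_x\cap(C_x\cap T_x(\xi))$ is exactly the section, by the hyperplane $C_x\cap T_x(\xi)$ (which misses the vertex $x$), of the quadratic cone $X(\xi)\cap T_x(\xi)$. That cone has vertex $x$ over a non-singular split quadric in a $\PAG^{d-1}(\K)$, so the section is projectively a non-singular split quadric in the $(d-1)$-space $C_x\cap T_x(\xi)$. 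Thus every member of $\Xi_x$ carries a \EQ\ of split type $d-2$.

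Verifying (MM2) is then immediate: if $\xi_1,\xi_2\ni x$ give distinct members of $\Xi_x$, then
$$(C_x\cap T_x(\xi_1))\cap(C_x\cap T_x(\xi_2))\ \subseteq\ T_x(\xi_1)\cap T_x(\xi_2)\ \subseteq\ \xi_1\cap\xi_2,$$
using $T_x(\xi_i)\subseteq\xi_i$. By (MM2) for $(X,\Xi)$, the space $\xi_1\cap\xi_2$ lies in $X$ and, being a subspace contained in a quadric, is singular and contains $x$; so any point $\bar r$ of the intersection lies on the singular line $\<x,\bar r\>$ and hence in $X_x$.

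The main work, and the step I expect to be the real obstacle, is (MM1): given $\bar p,\bar q\in X_x$ I must produce a single \EQ\ $\xi\ni x$ with $L:=\<x,\bar p\>$ and $L':=\<x,\bar q\>$ both in $T_x(\xi)$, for then $\bar p,\bar q\in X_x\cap(C_x\cap T_x(\xi))$. The case $L=L'$ is trivial; otherwise I would split according to the original collinearity. If some $p\in L\setminus\{x\}$ and $p'\in L'\setminus\{x\}$ are not $X$-collinear, let $\xi=[p,p']$ be the unique \EQ\ through them; as $x$ is collinear with both $p$ and $p'$ while $p,p'$ are non-collinear, Lemma~\ref{subspace} forces $x\in\xi$, whence $L,L'\subseteq\xi$ are generators through $x$ and lie in $T_x(\xi)$. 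Otherwise every point of $L$ is $X$-collinear with every point of $L'$, so each line $\<p,p'\>$ is singular; these sweep out the plane $\pi=\<L,L'\>$, which is therefore singular. By fact (i), $\pi\subseteq T_x$, so $\pi$ lies in a maximal singular subspace which, being contained in the finite-dimensional $T_x$, is finite-dimensional; since $2\le\lfloor d/2\rfloor$ for $d\ge 4$, Lemma~\ref{lemma2} places $\pi$ in some \EQ\ $\xi$, and $\pi\subseteq T_x(\xi)$ gives $L,L'\subseteq T_x(\xi)$. In both cases $\bar p,\bar q$ lie on the quadric of the member $C_x\cap T_x(\xi)$ of $\Xi_x$, establishing (MM1). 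The delicate points are precisely the reduction to these two cases and the use of finite-dimensionality of $T_x$ to legitimately invoke Lemma~\ref{lemma2}.
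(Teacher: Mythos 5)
Your proof is correct and follows essentially the same route as the paper's: the same dichotomy for (MM1) (two singular lines through $x$ either span a singular plane, handled by Lemma~\ref{lemma2} after observing that singular subspaces through $x$ lie in the finite-dimensional $T_x$, or else lie in a common symp), with (MM2) inherited directly from (MM2) for $(X,\Xi)$. The only cosmetic differences are that you invoke Lemma~\ref{subspace} where the paper cites Lemma~\ref{lemma1} (both are immediate consequences of the Quadrangle Lemma), and that you spell out the cone-section identification of the quadrics in $\Xi_x$ and the spanning of $C_x$, which the paper leaves implicit.
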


\begin{proof} We start by noting that the dimension of a maximal singular subspace in $(X_x,\Xi_x)$ is bounded above by $\dim T_x-1$. Hence we may use Lemma~\ref{lemma2} freely.

Now, Axiom~(MM1) follows from Lemma~\ref{lemma1} (two lines through $x$ not in a singular plane are contained in a \EQ) and Lemma~\ref{lemma2} (a singular plane is contained in a \EQ).

Axiom~(MM2) is a direct consequence of the validity of Axiom (MM2) for $(X,\Xi)$.
\end{proof}

The pair $(X_x,\Xi_x)$ will be called the \emph{residue at $x$}. This can also be defined for split type $3$ in the obvious way, but we do not necessarily obtain a pre-Mazzocca-Melone set of split type 1, as the following shows (the proof is the same as the proof of the previous lemma, except that we do not need Lemma~\ref{lemma2} anymore by the weakening of Axiom~(MM1)). 

\begin{lemma}\label{residuesplittype3}
Suppose $(X,\Xi)$ is a pre-Mazzocca-Melone set of split type $3$. Let $x\in X$ be arbitrary and let $(X_x,\Xi_x)$ be the residue at $x$. Then $(X_x,\Xi_x)$ satisfies {\em Axiom~(MM2)} for split type 1 (so for each $\xi\in \Xi_x$ we have that $X(\xi)$ is a plane conic), and the following weakened version of {\em Axiom~(MM1)}: 
\begin{itemize}
\item[\emph{(MM1$'$)}] Any pair of points $x$ and $y$ of $X$ either lies on a singular line, or is contained in a unique member of $\Xi_x$.
\end{itemize}
If in particular $(X,\Xi)$ has no singular planes, then $(X_x,\Xi_x)$ is a pre-Maxxocca-Melone set of split type $1$. \qed
\end{lemma} 

In general, we also want the residue to be a proper pre-Mazzocca-Melone set whenever the original is proper.
This follows from the next lemma, but we state a slightly stronger property, which we will also use in other situations.

\begin{lemma}\label{proper}
Let $(X,\Xi)$ be a proper Mazzocca-Melone set of split type $d\geq 1$, and let $x,y\in X$ be collinear. Then there is a \EQ\ containing $x$ and not containing $y$. In particular, the residue at any point of a proper Mazzocca-Melone set of split type $d\geq 4$ is a proper pre-Mazzocca-Melone set of split type $d-2$. If $d=3$, then the residue contains at least two conics.
\end{lemma}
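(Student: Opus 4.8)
The plan is to establish the displayed existence statement first and then read off the ``in particular'' clause. For the latter, suppose $x,y$ are collinear and let $L=\<x,y\>$. By the definition of a singular line (or by the Remark following Lemma~\ref{subspace}), $L$ lies in some symp $\xi_0$, and since $x,y\in X(\xi_0)$ with $L$ singular we get $L\subseteq\xi_0\cap X=X(\xi_0)$, hence $L\subseteq T_x(\xi_0)$. If $\eta$ is a symp with $x\in\eta$ but $y\notin\eta$, then $L\not\subseteq\eta$, so $y\notin T_x(\eta)$ while $y\in L\subseteq T_x(\xi_0)$; thus $T_x(\xi_0)\neq T_x(\eta)$. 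Intersecting these two distinct $d$-spaces through $x$ with the hyperplane $C_x$ gives two distinct members of $\Xi_x$, so the residue $(X_x,\Xi_x)$ is proper. It therefore suffices to produce a symp through $x$ avoiding $y$.

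I argue by contradiction, assuming every symp through $x$ contains $y$, and record two facts. \emph{(a) A symp avoiding a prescribed point exists.} If some point $p\in X$ lay in every symp, pick distinct symps $\xi\neq\xi'$ (properness) and points $z\in X(\xi)$, $z'\in X(\xi')$ each non-collinear with $p$; any symp through $z,z'$ (which exists by (MM1)) contains $p$ by hypothesis, hence equals the unique symp through the non-collinear pair $p,z$, namely $\xi$, and likewise equals $\xi'$, a contradiction. \emph{(b) The clean kill.} If a symp $H$ satisfies $y\in H$ and $x\notin H$, I reach a contradiction: by Lemma~\ref{subspace} the points of $X(H)$ collinear with $x$ form a proper singular subspace, and those collinear with $y$ lie in the proper section $X(H)\cap T_y(H)$; choosing $w\in X(H)$ non-collinear with both $x$ and $y$ (possible since $X(H)$ is not covered by these two proper sets), the unique symp through the non-collinear pair $y,w$ is $H$, while $[x,w]$ contains $y$ by assumption and hence also equals $H$, forcing $x\in H$.

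Combining (a) and (b): by (a) there is a symp $H$ with $x\notin H$, and by (b) it cannot contain $y$, so $H$ meets neither $x$ nor $y$. Applying (b) once more shows that every symp through $y$ also passes through $x$; together with our assumption this means that the symps through $x$ and the symps through $y$ coincide, each containing the entire line $L$. This \emph{symmetric} configuration, in the presence of the symp $H$ disjoint from $\{x,y\}$, is what must now be contradicted. A clean first step is that no point lies on a unique symp: if $x$ lay only in $\xi_0$, then every singular line through $x$ and every symp $[x,w]$ (for $w\in X(H)$ non-collinear with $x$) would be forced inside $\xi_0$, giving $X(H)\subseteq\xi_0$; since $X(H)$ spans $H$ this yields $H=\xi_0\not\ni x$, absurd. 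The plan for the general symmetric case is then to pick $w\in X(H)$ non-collinear with $x$, so that $\eta:=[x,w]$ contains $L$ and is distinct from $H$, choose a second point $w'\in X(H)$ collinear with $w$ with $[x,w']\neq\eta$, and apply the Quadrangle Lemma to the resulting singular configuration through $x$, $w$, $w'$ and the fixed line $L$, engineered to place $x$ in $H$.

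I expect this last step to be the main obstacle: excluding a point that lies on a whole ``pencil'' of symps all sharing the fixed singular line $L$ is exactly the degeneracy that (MM1) and (MM2) do not rule out by themselves. The likely resolution is to feed in axiom (MM3): the common line $L$ forces $L\subseteq T_x(\xi)$ for \emph{all} symps $\xi\ni x$, a strong constraint on the at most $2d$-dimensional space $T_x$, which should be incompatible with the abundance of symps through $x$ produced above. Making this incompatibility precise, rather than the two clean reductions (a) and (b), is where the real work lies.
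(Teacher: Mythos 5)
Your proposal has a genuine gap, and you name it yourself: after the (correct) reductions (a) and (b) you are left with the ``symmetric'' configuration in which the symps through $x$ and through $y$ coincide and all contain $L$, together with a symp $H$ avoiding both, and for that configuration you offer only a plan (``apply the Quadrangle Lemma \dots engineered to place $x$ in $H$'') plus the admission that making it precise ``is where the real work lies.'' That remaining case is not a loose end; it is the entire content of the lemma, since everything before it only rules out asymmetric failures. Moreover, your guess that the resolution must feed in (MM3) points in the wrong direction: the paper's proof uses only (MM1), (MM2), Lemma~\ref{subspace} and Lemma~\ref{lemma1}, and your proposed route of forcing $x$ into the far-away symp $H$ is not how the pencil of symps through $L$ gets killed.

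The missing idea is to manufacture a \emph{new} symp through $x$ out of two points collinear with $x$, chosen inside two given symps so as to dodge $y$ --- no reduction to a symp avoiding $x$ and $y$ is needed at all. Properness gives two distinct \EQ s $H,H'$ through $x$; assume both contain $y$ (otherwise we are done). Since $H\cap H'$ is a projective subspace all of whose points lie in $X$ (by (MM2)), hence a singular subspace, and by Lemma~\ref{subspace}, one can pick $p\in H\setminus H'$ collinear with $x$ but not collinear with $y$, and then $p'\in H'\setminus H$ collinear with $x$ but not collinear with $p$. The two singular lines $\<x,p\>$ and $\<x,p'\>$ meet in $x$ and do not span a singular plane (as $p,p'$ are non-collinear), so by Lemma~\ref{lemma1} they lie in a common \EQ, which must be the unique symp $H''=[p,p']$; in particular $x\in H''$. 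And $y\notin H''$: otherwise $H\cap H''$ would be a singular subspace containing the non-collinear points $p$ and $y$, which is absurd. This two-step construction disposes of exactly your symmetric case --- indeed of the general case directly --- which is why the paper's proof is a few lines where your reductions (a) and (b), correct as they are, never get to the heart of the matter.
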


\begin{proof}
It is easy to see that there are at least two different \EQ s $H,H'$ containing $x$ (this follows straight from the properness of $(X,\Xi)$, and, in fact, this already implies that the residue at $x$ is proper).  Suppose both contain $y$. Select a point $p\in H\setminus H'$ collinear to $x$ but not collinear to $y$, and a point $p'\in H'\setminus H$ collinear to $x$ but not collinear to $p$ (these points exist by Lemma~\ref{subspace} and since $H\cap H'$ is a singular subspace). By Lemma~\ref{lemma1} the unique \EQ\ $H''$ containing $p$ and $p'$ also contains $x$. It does not contain $y$, however, because the intersection $H\cap H''$ would otherwise not be a singular subspace, as it would contain the non-collinear points $p$ and $y$.
\end{proof}

In order to be able to use such an inductive strategy, one should also have that Axiom (MM3) is inductive. To obtain this, one has to apply different techniques for the different small values of $d$ (for $d=1$ and $d=2$, there was not even such an induction possible). Roughly, the arguments are different for $d=3,4,5$, there is a general approach for $6\leq d\leq 9$ and another one again for $d\geq 10$.

From now on, we also assume (MM3). The basic idea for the induction  for larger $d$ is to prove that the tangent space $T_x$ does not contain any point $y\in X$ with $\<x,y\>$ not singular.   Such a point $y$ will be called a \emph{wrinkle (of $x$)}, and if $x$ does not have any wrinkles, then $x$ is called \emph{smooth}. Once we can prove that all points are smooth, we get control over the dimension of the tangent spaces in the residues at the points of $X$. For $d=3,4,5$, this fails, however, and we have to use additional arguments to achieve this. The case $d=3$ is ``extra special'', since we cannot even apply Lemma~\ref{induction}. This case will be the most technical of all (but also the case $d=2$ is rather technical, see~\cite{JSHVM3}). 

Concerning wrinkles, we prove below a general lemma that helps in ruling them out. Before that, we need two facts about hyperbolic and parabolic quadrics. In passing, we also prove a third result which we will need later. We freely use the well known fact that, if for any quadric $Q$ and any point $x\in Q$, some line $L$ through $x$ does not belong to the tangent space of $Q$ at $x$, then $L$ intersects $Q$ in precisely two points, one of which is $x$.

\begin{lemma}\label{quadric}
Let $H$ be a hyperbolic quadric in $\PAG^{2n+1}(\K)$, $n\geq 1$. Then every $(n+1)$-space of $\PAG^{2n+1}(\K)$ contains a pair of non-collinear points of $H$.
\end{lemma}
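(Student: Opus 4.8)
The plan is to argue by contradiction, reducing the statement to the non-emptiness of a hyperbolic quadric on a subspace of the right dimension, after factoring out the radical. Throughout, ``collinear'' refers to the quadric $H$: two points $x,y\in H$ are \emph{non-collinear} exactly when the line $\<x,y\>$ is not contained in $H$, i.e.\ when $y\notin T_x(H)$, in which case $\<x,y\>$ meets $H$ in precisely the two points $x,y$ by the fact quoted just before the lemma. First I would record the one dimension count that drives everything: a maximal singular subspace of a hyperbolic quadric in $\PAG^{2m+1}(\K)$ has projective dimension $m$ (vector dimension $m+1$, the Witt index), so any $(m+1)$-space (vector dimension $m+2$) meets it in vector dimension at least $(m+1)+(m+2)-(2m+2)=1$, hence in a point of the quadric. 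Applied to $\Pi$ itself (with $m=n$) this shows $Q':=H\cap\Pi\neq\emptyset$; and since $\dim\Pi=n+1$ exceeds the dimension $n$ of the generators, $\Pi$ is not singular, so $Q'$ is a proper quadric of $\Pi$. Now it suffices to produce a single point $x\in Q'$ with $\Pi\not\subseteq T_x(H)$: any line of $\Pi$ through $x$ avoiding $T_x(H)$ then meets $H$ in a second point $y$, and $x,y$ are the desired non-collinear pair.

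Next I would assume, for contradiction, that $\Pi\subseteq T_x(H)$ for \emph{every} $x\in Q'$. Using the polarity $\perp$ attached to the bilinear form of $H$, this says $x\in\Pi^\perp$ for all $x\in Q'$, i.e.\ $Q'$ is contained in the radical $\Pi\cap\Pi^\perp$; as $Q'$ consists of singular points, in fact $Q'\subseteq R_0$, where $R_0$ is the (totally singular) set of singular points of $\Pi\cap\Pi^\perp$. The idea is then to quotient this radical away: writing $r_0=\dim R_0+1$ for its vector dimension, the space $R_0^\perp/R_0$ carries a non-degenerate quadratic form whose Witt index has dropped by exactly $r_0$, hence is again of maximal Witt index, giving a \emph{non-degenerate} hyperbolic quadric $\bar H$ in $\PAG^{2m+1}(\K)$ with $m=n-r_0$. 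Since $R_0\subseteq\Pi\cap\Pi^\perp$, the image $\Pi/R_0$ is an $(m+1)$-space of $R_0^\perp/R_0$, so we are back in exactly the configuration of the lemma, now with a non-degenerate quadric. The count of the first paragraph gives a point $\bar x\in \bar H\cap(\Pi/R_0)$; choosing a representative $x\in\Pi$ with $q(x)=\bar q(\bar x)=0$ (well defined because $R_0$ is singular and orthogonal to $\Pi$), we obtain $x\in H\cap\Pi=Q'$ with $x\notin R_0$, contradicting $Q'\subseteq R_0$. This contradiction proves the lemma.

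The whole content sits in handling the radical correctly, and this is the step I expect to be the main obstacle. The crude estimate that the induced form on $\Pi$ has Witt index at least $\nu(H)-\operatorname{codim}\Pi=1$ only bounds the \emph{total} isotropic dimension, which in a degenerate restriction can be swallowed entirely by the radical—and that degenerate situation is precisely the contradiction hypothesis, so one really must isolate the genuine hyperbolic part. In characteristic $\neq 2$ the radical $\Pi\cap\Pi^\perp$ is automatically totally singular, so $R_0=\Pi\cap\Pi^\perp$ and the quotient argument runs verbatim. In characteristic $2$ the radical of the bilinear form need not be totally singular, and the care required is to quotient not by the whole radical but by its singular part $R_0$; this is legitimate because $q$ restricts to a Frobenius-semilinear map on $\Pi\cap\Pi^\perp$, so its zero set $R_0$ is a totally singular subspace, and modding out a totally singular subspace always returns a non-degenerate hyperbolic quadric of the same maximal-index type. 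With that single adjustment the argument is uniform across all characteristics.
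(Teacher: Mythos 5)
Your proof is correct, and it takes a genuinely different route from the paper's. The paper starts from the same two observations (the intersection $W=\Pi\cap H$ is nonempty because an $(n+1)$-space meets every generator, and under the contradiction hypothesis $\Pi\subseteq T_x(H)$ for each $x\in W$), but then proceeds geometrically and inductively: fixing one point $x\in W$, it views $T_x(H)\cap H$ as a cone over $x$ with base a hyperbolic quadric $H'$ in $\PAG^{2n-1}(\K)$, and $\Pi$ as a cone over $x$ with base an $n$-space meeting $H'$ in a singular subspace, thereby reducing $n$ to $n-1$; the induction terminates at $n=1$, where a plane cannot meet a hyperbolic quadric of $\PAG^{3}(\K)$ in just a singular subspace. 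Your argument instead quotients in a single step by the entire totally singular part $R_0$ of the radical $\Pi\cap\Pi^{\perp}$, and reaches the contradiction by lifting a singular point of the quotient quadric back to a point of $Q'\setminus R_0$, against $Q'\subseteq R_0$. In effect, the paper's projection from $x$ is the rank-one case of your quotient, iterated; you do all of it at once. What the paper's route buys is that it stays inside elementary projective geometry of quadrics, with no polarity, radical, or Witt-index bookkeeping and no characteristic distinctions, since the cone structure is characteristic-free (at the price of leaving the induction details and the base case to the reader). What your route buys is a non-inductive, uniform argument in which the characteristic-$2$ subtlety (the radical of the bilinear form need not be totally singular, so one must factor out only its singular part $R_0$, which is a subspace because $q$ is Frobenius-semilinear on the radical) is made explicit rather than absorbed silently; it also isolates the structural reason the lemma holds: a pairwise-collinear intersection would have to sit inside the radical, yet the quotient form still forces a singular point of $\Pi$ outside it.
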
\begin{proof}
Let, by way of contradiction, $U$ be a subspace of dimension $n+1$ intersecting $H$ precisely in a singular subspace $W$. Since $U$ meets every subspace of dimension $n$ that is completely contained in $H$ in at least one point, we deduce that $W$ is nonempty. Let $x\in W$ be an arbitrary point. 
Since no line in $U$ through $x$ intersects $H$ in precisely two points, each such line is contained in the tangent space $T_x(H)$ of $H$ at $x$. Now, $T_x(H)\cap H$ is a cone over $x$ of a hyperbolic quadric $H'$ in some $\PAG^{2n-1}(\K)$, and $U$ can be considered as a cone over $x$ of an $n$-space in $\PAG^{2n-1}(\K)$ intersecting $H'$ in some subspace. An obvious induction argument reduces the lemma now to the case $n=1$, which leads to a contradiction, proving the lemma.
\end{proof}

\begin{lemma}\label{paraquad}
Let $P$ be a parabolic quadric in $\PAG^{2n}(\K)$, $n\geq 1$. Then through every singular $(n-1)$-space  there exists exactly one $n$-space containing no further points of $P$. Also, every $(n+1)$-space of $\PAG^{2n}(\K)$ contains a pair of non-collinear points of $P$.
\end{lemma}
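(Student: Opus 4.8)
The plan is to prove both assertions simultaneously by induction on $n$, running exactly parallel to the proof of Lemma~\ref{quadric} and exploiting the fact that a tangent hyperplane section of a parabolic quadric is well controlled: for $x\in P$ the intersection $T_x(P)\cap P$ is a cone with vertex $x$ over a parabolic quadric $P'$ in a $\PAG^{2n-2}(\K)$, and projection from $x$ sets up a dimension-preserving correspondence between subspaces of $T_x(P)$ through $x$ and subspaces of that $\PAG^{2n-2}(\K)$, under which ``contained in $P$'' corresponds to ``contained in $P'$''. The base case $n=1$ is immediate: here $P$ is a conic, a singular $0$-space is a point $x\in P$ whose unique $1$-space meeting $P$ only in $x$ is the tangent line, while the $(n+1)=2$-space is the whole plane, and since a conic carries no singular line, any two of its (at least two) points form a non-collinear pair.

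For the first assertion I would argue the inductive step as follows. Let $\pi$ be a singular $(n-1)$-space and fix $x\in\pi$; projection from $x$ sends $\pi$ to a maximal singular $(n-2)$-space $\pi'$ of $P'$. By induction there is a unique $(n-1)$-space $\Pi'$ through $\pi'$ with $\Pi'\cap P'=\pi'$; pulling back the cone over $\Pi'$ with vertex $x$ produces an $n$-space $\Pi\supseteq\pi$ inside $T_x(P)$ with $\Pi\cap P=\pi$, which gives existence. For uniqueness, let $\Sigma$ be any $n$-space through $\pi$ with $\Sigma\cap P=\pi$. Every line of $\Sigma$ through $x$ lies in $T_x(P)$: by the quoted fact a line through $x\in P$ not in $T_x(P)$ is a secant, and a secant would meet $P$ in a second point of $\Sigma\cap P=\pi$, forcing that line into $\pi\subseteq P$ and so contradicting that it is a secant. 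Hence $\Sigma\subseteq T_x(P)$, and projecting from $x$ yields an $(n-1)$-space through $\pi'$ meeting $P'$ only in $\pi'$, which by induction equals $\Pi'$; therefore $\Sigma=\Pi$.

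For the second assertion I would argue by contradiction: suppose an $(n+1)$-space $U$ contains no non-collinear pair of $P$, so that $U\cap P$ is a singular subspace. First, $U\cap P$ is nonempty, since otherwise $q$ would be anisotropic on the $(n+2)$-dimensional vector space underlying $U$, whence $U$ would meet a maximal totally singular $n$-dimensional subspace only in $0$, impossible by dimensions in the ambient $(2n+1)$-dimensional space. Pick $x\in U\cap P$. Exactly as above, no line of $U$ through $x$ can be a secant (its second point would be a point of $U\cap P$ non-collinear with $x$), so every such line lies in $T_x(P)$ and thus $U\subseteq T_x(P)$. Projecting from $x$ gives an $n$-space $\bar U$ of the residual $\PAG^{2n-2}(\K)$ with $\bar U\cap P'=\overline{U\cap P}$ singular; but $\bar U$ has dimension $(n-1)+1$, so by the induction hypothesis it must contain a non-collinear pair of $P'$, a contradiction.

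The main obstacle I anticipate is careful bookkeeping of the reduction rather than any deep difficulty: one must verify that the residual quadric $P'$ is again parabolic of exactly the right dimension $2(n-1)$, that projection from $x$ genuinely preserves the relevant dimensions as well as the relations ``singular'' and ``$\cap P=\pi$'', and that the nonemptiness step is valid over every field. The last point is reassuring, as it is a pure dimension count and never invokes a polarity, so characteristic $2$---where the bilinear form of a parabolic quadric is degenerate with a nucleus---requires no separate treatment. A minor case to watch is $U\cap P=\{x\}$ in the second assertion, where $\overline{U\cap P}$ is empty; this still contradicts the induction hypothesis, since an $n$-space of $\PAG^{2n-2}(\K)$ must at least meet $P'$.
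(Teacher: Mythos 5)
Your proof is correct and follows essentially the same route as the paper's: both assertions are reduced by induction to the case $n=1$ via the tangent-cone structure $T_x(P)\cap P$ (a cone with vertex $x$ over a parabolic quadric $P'$ in some $\PAG^{2n-2}(\K)$), with the second assertion argued by contradiction exactly as in Lemma~\ref{quadric}. The only difference is that you spell out details the paper treats as obvious, namely the secant-line argument forcing the relevant subspaces into $T_x(P)$ and the dimension count giving nonemptiness of $U\cap P$.
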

\begin{proof}
To prove the first assertion, let $W$ be a singular $(n-1)$-space of $P$ and let $x\in W$. Then, as in the previous proof,  $T_x(P)\cap P$ is a cone over $x$ of a parabolic quadric $P'$ in some $\PAG^{2n-2}(\K)$, and every $n$-space of $\PAG^{2n}(\K)$ intersecting $P$ in exactly $W$ corresponds to an $(n-1)$ space of $\PAG^{2n-2}(\K)$ intersecting $P'$ in exactly $W'$, where $W'$ corresponds to $W$. An obvious induction argument reduces the assertion to the case $n=1$, where the statement is obvious (there is a unique tangent in every point).

The second assertion is proved in exactly the same way as  Lemma~\ref{quadric}, again reducing the problem to $n=1$, where the result is again obvious (since $n+1=2n$ in this case).
\end{proof}

\begin{lemma}\label{n-2}
Let $Q$ be a hyperbolic or parabolic quadric in $\PAG^{n}(\K)$, $n\geq 3$, and let  $S$ be a subspace of dimension $n-2$ of $\PAG^n(\K)$. Then some line of $Q$ does not intersect $S$.  
\end{lemma}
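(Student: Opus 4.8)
The plan is to pull the desired line out of the pencil of lines of $Q$ through a single, well-chosen point, so that no induction on $n$ is needed; everything is reduced to one structural claim about where tangent hyperplanes can lie. For $x\in Q$ let $T_x(Q)$ denote the tangent hyperplane of $Q$ at $x$. The core reduction is: \emph{it suffices to find $x\in Q\setminus S$ with $S\not\subseteq T_x(Q)$.} Indeed, given such an $x$, the intersection $Q\cap T_x(Q)$ is a cone with vertex $x$ over a split quadric $Q'$ of the same type in a complementary $(n-2)$-space $\Pi\subseteq T_x(Q)$ (this is exactly the reduction already used in the proofs of Lemmas~\ref{quadric} and~\ref{paraquad}), and the lines of $Q$ through $x$ are precisely the lines $\<x,p'\>$ with $p'\in Q'$. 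Since $x\notin S$, the subspace $H:=\<x,S\>$ is a hyperplane, and a line $\<x,p'\>$ meets $S$ if and only if $p'\in H$. From $S\not\subseteq T_x(Q)$ one gets $\Pi\not\subseteq H$ (otherwise $T_x(Q)=\<x,\Pi\>\subseteq H$, forcing $S\subseteq T_x(Q)$), so $H\cap\Pi$ is a hyperplane of $\Pi$. As the nonsingular quadric $Q'$ spans $\Pi$, it cannot lie in $H\cap\Pi$, so there is a point $p'\in Q'\setminus H$; the line $\<x,p'\>\subseteq Q$ then misses $S$, as required.

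It remains to produce the point $x$. First I would record that the set of bad points is small: writing $b$ for the bilinear form associated with $Q$, for $x\in Q$ the tangent hyperplane $T_x(Q)$ is the polar hyperplane $x^{\perp}$, so $S\subseteq T_x(Q)$ is equivalent to $x\in S^{\perp}$. Hence $B:=\{x\in Q: S\subseteq T_x(Q)\}=Q\cap S^{\perp}$. Since $S$ has codimension $2$ and the radical of $b$ has dimension at most $1$, the polar $R:=S^{\perp}$ is a subspace of projective dimension at most $2$. The goal is now reduced to showing $Q\not\subseteq S\cup R$, for then any point of $Q\setminus(S\cup R)$ is a valid choice of $x$.

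Suppose, for contradiction, that $Q\subseteq S\cup R$. As $Q$ spans $\PAG^{n}(\K)$, there is a point $x\in Q\setminus S$, and then $x\in R$. The lines of $Q$ through $x$ together span $T_x(Q)$, which has projective dimension $n-1$, whereas $\dim R\leq 2<n-1$; consequently not all lines of $Q$ through $x$ lie in $R$, so there is a line $\ell$ of $Q$ with $x\in\ell$ and $\ell\not\subseteq R$. Then $\ell\cap R=\{x\}$, so $\ell\setminus\{x\}\subseteq Q\setminus R\subseteq S$; since $|\K|\geq 2$ this forces two distinct points of $\ell$ into the subspace $S$, whence $\ell\subseteq S$ and $x\in S$, a contradiction. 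I expect the main obstacle to be this step, and specifically the characteristic-$2$ parabolic case: there the polar form degenerates, its radical is the nucleus, and $R=S^{\perp}$ can jump from a line to a plane. One must therefore verify uniformly that $\dim R\leq 2$ and that $n-1>\dim R$ holds even when $\dim R=2$ — which is fine, since $R$ is a plane only when $Q$ is parabolic in characteristic $2$, where $n$ is even and $n\geq 4$, so $n-1\geq 3$. Keeping track of this degeneracy (and of the fact that $T_x(Q)=x^{\perp}$ and $B=Q\cap R$ remain valid there, $\nu\notin Q$ guaranteeing $x\notin\operatorname{rad}(b)$) is the only genuinely delicate point of the argument.
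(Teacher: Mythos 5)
Your proof is correct, but it follows a genuinely different route from the paper's. The paper argues entirely inside the incidence geometry of the quadric: it fixes two disjoint maximal singular subspaces $M_1,M_2$, reduces to the situation where $S$ meets every maximal singular subspace in a hyperplane, and then exploits the duality that collinearity induces between $M_1$ and $M_2$ (with a separate sub-case analysis, in the parabolic case, according to whether $S\subseteq\<M_1,M_2\>$). You instead work with the polarity: the bad points are exactly $Q\cap(S\cup S^{\perp})$, the polar $S^{\perp}$ has projective dimension at most $2$ (at most $1$ unless $Q$ is parabolic in characteristic $2$ and the nucleus lies in $S$), and a short argument using the fact that the lines of $Q$ through a point span its tangent hyperplane shows $Q\not\subseteq S\cup S^{\perp}$; the cone structure of $Q\cap T_x(Q)$ then produces the required line through any good point $x$. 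Your key inequality $n-1>\dim S^{\perp}$ does need exactly the case check you flag at the end (for $n=3$ the quadric is hyperbolic, so $S^{\perp}$ is a line; the polar can be a plane only in the parabolic characteristic-$2$ case, where $n\geq 4$), and your identification $T_x(Q)=x^{\perp}$ together with $B=Q\cap S^{\perp}$ survives the degeneracy because the nucleus is off the quadric -- so the delicate point is genuinely handled, not just mentioned. What each approach buys: yours is uniform over the hyperbolic/parabolic dichotomy, avoids the paper's sub-cases, and even yields the stronger conclusion that the bad locus lies in a low-dimensional subspace pair; the paper's proof is purely synthetic (only singular subspaces and collinearity), so it never has to mention the polar form at all and is immune to characteristic-$2$ subtleties by construction, which matches the style of the surrounding lemmas.
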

\begin{proof}
Pick two disjoint maximal singular subspaces $M_1,M_2$ of $Q$. We may assume that $S$ intersects every maximal singular subspace $M$ in either a hyperplane of $M$, or in $M$ itself, as otherwise we easily find a line inside $M$ not intersecting $S$.  If $Q$ is hyperbolic, then, as $\<M_1,M_2\>=\PAG^n(\K)$, this easily implies that $S=\<S\cap M_1,S\cap M_2\>$ and $\dim (S\cap M_i)=\frac{n-3}{2}$. 
Since collinearity induces a duality between disjoint maximal singular subspaces, there is a unique point in $M_1$ not collinear to any point of $M_2\setminus S$. It follows that we find a point $x_1\in M_1\setminus S$ collinear in $Q$ to some point $x_2\in M_2\setminus S$ and that the line $L$ joining $x_1,x_2$ does not intersect $S$. If $Q$ is parabolic, then the same argument and conclusion holds if $\dim(S\cap\<M_1,M_2\>)=n-3$. If not, then $S\subseteq\<M_1,M_2\>$, and we find a point $x\in M_1\cup M_2\setminus S$ (otherwise $S=\<M_1,M_2\>$, which is $(n-1)$-dimensional, a contradiction). Since not all points of $M_1$ and $M_2$ are collinear to $x$, and since the points collinear to $x$ in $Q$ span a hyperplane of $\PAG^n(\K)$, we find a point $y\in Q$ outside $\<M_1,M_2\>$ collinear to $x$. The line $\<x,y\>$ belongs to $Q$ and does not meet $S$. 
\end{proof}

We are now ready to prove a result that restricts the possible occurrences of wrinkles. It is one of the fundamental observations in our proof.

\begin{lemma}\label{badpoints}
Let $(X,\Xi)$ be a Mazzocca-Melone set of split type $d\geq 1$. Let $x\in X$. Then no wrinkle $y$ of $x$ is contained in the span of two tangent spaces $T_x(\xi_1)$ and $T_x(\xi_2)$, for two different $\xi_1,\xi_2\in\Xi$.
\end{lemma}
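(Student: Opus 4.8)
The plan is to argue by contradiction: assume a wrinkle $y$ of $x$ lies in $\pi:=\langle T_x(\xi_1),T_x(\xi_2)\rangle$ for two distinct $\xi_1,\xi_2\in\Xi$ (both necessarily containing $x$, since $T_x(\xi_i)$ is only defined there). First I would record two reductions. Since the tangent hyperplane $T_x(\xi_i)$ is contained in the ambient $(d+1)$-space $\xi_i$, any point of $X$ lying in $T_x(\xi_i)$ lies in $X(\xi_i)\cap T_x(\xi_i)$ and is therefore $X$-collinear with $x$; as $y$ is a wrinkle this already shows $y\notin T_x(\xi_1)\cup T_x(\xi_2)$. Second, because $T_x(\xi_i)\subseteq\xi_i$ we have $T_x(\xi_1)\cap T_x(\xi_2)\subseteq\xi_1\cap\xi_2$, while $\xi_1\cap\xi_2\subseteq X$ by (MM2) is a singular subspace through $x$ and hence lies in each tangent hyperplane; thus $T_x(\xi_1)\cap T_x(\xi_2)=\xi_1\cap\xi_2=:U$, a singular subspace through $x$.

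The heart of the argument is a ``two-point kill''. Because $y\in\pi$ but $y\notin T_x(\xi_1)\cup T_x(\xi_2)$, the point $y$ lies on a line meeting $T_x(\xi_1)$ in a point $p_1$ and $T_x(\xi_2)$ in a point $p_2$, with $p_1,p_2\neq y$. The aim is to choose $p_1,p_2$ to be honest, mutually non-collinear points of $X$, i.e.\ $p_1\in X(\xi_1)\cap T_x(\xi_1)$ and $p_2\in X(\xi_2)\cap T_x(\xi_2)$ with $p_1\not\sim p_2$. Granting this, the line dichotomy coming from (MM1) forces $\langle p_1,p_2\rangle\cap X=\{p_1,p_2\}$, since a line carrying two non-collinear points of $X$ is not singular and so meets $X$ in exactly two points. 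But $y\in X$ lies on $\langle p_1,p_2\rangle$, whence $y\in\{p_1,p_2\}\subseteq T_x(\xi_1)\cup T_x(\xi_2)$, contradicting the first reduction. This is the clean contradiction I am aiming for.

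The main obstacle is exactly the realization step: turning the formal splitting $y\in\langle p_1,p_2\rangle$ into genuine, non-collinear quadric points. The admissible $p_1$ range over $P_1:=T_x(\xi_1)\cap\langle y,T_x(\xi_2)\rangle$, which contains $U$ as a hyperplane, and projection from $y$ sets up a perspectivity between $P_1$ and the corresponding $P_2\subseteq T_x(\xi_2)$; I must find $p_1$ in the tangent cone $X(\xi_1)\cap T_x(\xi_1)$ whose image $p_2$ again lands in the tangent cone of $X(\xi_2)$, with $p_1\not\sim p_2$. This is where the quadric lemmas enter: Lemmas~\ref{quadric}, \ref{paraquad} and~\ref{n-2} guarantee non-collinear pairs, respectively lines, of a split quadric inside subspaces of controlled dimension and avoiding a given singular subspace, which is precisely what is needed to produce such $p_1,p_2$ off $U$. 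Two points need extra care. If $p_1,p_2$ come out collinear one only obtains a path $x\sim p_1\sim y$ and must re-choose within the family $P_1$ to break the collinearity (or invoke the Quadrangle Lemma), for which the quadric lemmas give enough freedom. And the degenerate positions—when some $P_i$ is tangent to $X(\xi_i)$ along $U$, and the low-dimensional cases, most notably $d=1$ where the quadrics carry no lines—have to be settled directly. I expect the bookkeeping around $U$ and the perspectivity from $y$, together with these degenerate configurations, to be the only real difficulty; once suitable $p_1,p_2$ are in hand, the contradiction above is immediate.
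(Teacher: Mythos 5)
Your two reductions and your endgame are both sound: $y\notin T_x(\xi_1)\cup T_x(\xi_2)$, the identity $T_x(\xi_1)\cap T_x(\xi_2)=\xi_1\cap\xi_2=U$ with $U$ singular, and the fact that a line carrying three points of $X$ must be singular. The genuine gap is the realization step, and it is not a boundary case to be ``settled directly'' --- it is the entire content of the lemma. Keeping $y$ fixed, your candidates $p_1$ are confined to the $(\ell+1)$-space $P_1=T_x(\xi_1)\cap\langle y,T_x(\xi_2)\rangle$ (where $\ell=\dim U$), and the only points of $X$ available inside $T_x(\xi_1)$ form the tangent cone $X(\xi_1)\cap T_x(\xi_1)$. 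Nothing forces $P_1$ to meet this cone outside $U$: the quadratic form restricted to $P_1$ vanishes on the hyperplane $U$, so $P_1\cap X(\xi_1)=U\cup U'$ for a hyperplane $U'$ of $P_1$ that may coincide with $U$ (tangency). For $\ell=0$ a line through the vertex of the cone meets it in $\{x\}$ alone unless it is a ruling, and for $d=1$ the cone is $\{x\}$ itself, so your method has no candidates whatsoever. The quadric lemmas cannot rescue this: Lemmas~\ref{quadric} and~\ref{paraquad} produce points of a split quadric only in subspaces of dimension at least $\lceil \frac{d}{2}\rceil+1$ of the full $(d+1)$-space $\xi_i$, whereas your $P_i$ are small subspaces of the tangent space, about which those lemmas say nothing; and even when $P_1$ does meet the cone outside $U$, you would still need the perspectivity image to land on the cone of $\xi_2$ and to be non-collinear with $p_1$, each of which can fail.

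The paper's proof gets around exactly these obstructions by three moves absent from your proposal. First, it does not keep the wrinkle fixed: it builds a whole subspace $\Pi=\langle W,y\rangle$ of directions through $x$ consisting of wrinkles and re-chooses $y$ inside it at the very end --- the freedom you lack. Second, it looks for its point $a_1\in X$ not inside a tangent space but inside a $(\lceil \frac{d}{2}\rceil+1)$-dimensional subspace $U_1$ of the full span $\langle H_1\rangle$, which is precisely the threshold where Lemmas~\ref{quadric} and~\ref{paraquad} apply. Third, it never produces the second point directly: it takes a \EQ\ through $y$ and $a_1$ and invokes (MM2) against $H_2$ to force $a_2\in X$. (It also disposes of $d=1$ by a pure dimension count, since there the tangent cones are trivial.) Without these ideas, or substitutes for them, your two-point kill cannot be set in motion: as it stands the proposal is a valid contradiction scheme resting on an unproved existence claim, and the tools you cite are not capable of proving it.
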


\begin{proof}
Suppose, by way of contradiction, that $T_x$ contains the wrinkle $y$, and that there are two \EQ s $H_1$ and $H_2$ such that $x\in H_1\cap H_2$ and $y\in\<T_x(H_1),T_x(H_2)\>$. Then there are points $a_i\in T_x(H_i)$, $i\in\{1,2\}$, such that $y\in a_1a_2$. Our aim is to show that we can (re)choose the wrinkle $y$ and the point $a_1$ in such a way that $a_1\in X$. Considering a \EQ\ through $y$ and $a_1$ and using Axiom (MM2) then implies that $a_2\in X$, and so the plane $\<x,a_1,a_2\>$, containing two singular lines and an extra point $y\in X$, must be singular, contradicting the fact that $y$ is a wrinkle of $x$.

Set $U=T_x(H_1)\cap T_x(H_2)$ and $\dim U=\ell$, $0\leq\ell\leq\lfloor \frac{d}{2} \rfloor$.

By assumption $\dim\<T_x(H_1),T_x(H_2)\>=2d-\ell$. Hence $\dim([x,y]\cap\<T_x(H_1),T_x(H_2)\>)\geq d+1-\ell$ and $\dim(T_x([x,y])\cap\<T_x(H_1),T_x(H_2)\>)\geq d-\ell$. The latter implies that we can find a subspace $W$ of dimension $\lceil \frac{d}{2} \rceil-\ell$ through $x$ contained in $T_x([x,y])\cap \<T_x(H_1),T_x(H_2)\>$, but intersecting $T_x(H_1)\cup T_x(H_2)$ exactly in $\{x\}$ (using the fact that $T_x([x,y])$ intersects $T_x(H_i)$ in a subspace of dimension at most $\lfloor \frac{d}{2} \rfloor$). Note that $W$ is not necessarily a singular subspace. We consider the space $\Pi=\<W,y\>$ of dimension $\lceil \frac{d}{2} \rceil+1-\ell$. Every line in $\Pi$ through $x$ outside $W$ contains a unique wrinkle.   It follows that $\Pi\cap\<H_i\>=\{x\}$, $i\in\{1,2\}$. This is already a contradiction for $d=1$ and $\ell=0$, since this implies $3=2+1=\dim\Pi+\dim\<T_{x}(H_1)\>\leq 2d-\ell=2$. So we assume $d>1$. Using straightforward dimension arguments, we deduce that there are unique $(\lceil \frac{d}{2} \rceil+1)$-spaces $U_i\subseteq\<H_i\>$, $i=1,2$, containing $U$ such that $\Pi\subseteq \<U_1,U_2\>$ (and $\dim\<U_1,U_2\>=2\lceil \frac{d}{2} \rceil+2-\ell$). Let $U_1'$ be the $\lceil \frac{d}{2} \rceil$-space obtained by intersecting $\<U_2,W\>$ with $U_1$. Then, by Lemmas~\ref{quadric} and~\ref{paraquad}, we can pick a point $a_1\in (X\cap U_1)\setminus U_1'$. Since $U_2$ and $\Pi$ meet in only $x$, and $a_1\in\<U_2,\Pi\>$, there is a unique plane $\pi$ containing $x,a_1$ and intersecting both $\Pi$ and $U_2$ in (distinct) lines. By our choice of $a_1$ outside $U_1'$, the line $\pi\cap\Pi$ is not contained in $W$, hence contains a wrinkle, which we may assume without loss of generality to be $y$. Inside the plane $\pi$, the line $a_1y$ intersects $\pi\cap U_2$ in a point $a_2\in \<H_2\>$. 
This completes the proof of the lemma.
\end{proof}

We can show the power of the previous lemma by the following lemma, which establishes the non-existence of wrinkles for small values of $d$.

\begin{lemma}\label{generation}
If $2\leq d\leq 9$, then for every non-smooth point $x\in X$, there exist at least one wrinkle $y$ of $x$ and two \EQ s $H_{1}$ and $H_{2}$ through $x$ such that $y\in \langle T_x(H_{1}),T_x(H_{2}) \rangle$.
\end{lemma}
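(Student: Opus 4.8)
The plan is to fix one wrinkle and use the \EQ\ it determines to manufacture the required pair. Let $x$ be a non-smooth point, pick any wrinkle $y_0$ of $x$, and let $\xi_0=[x,y_0]$ be the unique \EQ\ through the non-collinear pair $x,y_0$. The first point I would establish is that $\xi_0\subseteq T_x$. Indeed $T_x(\xi_0)\subseteq T_x$ by the definition of $T_x$, while $y_0\in T_x$ because $y_0$ is a wrinkle; since $\<x,y_0\>$ is not singular, $y_0$ lies in $\xi_0\setminus T_x(\xi_0)$, and as $T_x(\xi_0)$ is a hyperplane of the $(d+1)$-space $\xi_0$ we get $\xi_0=\<T_x(\xi_0),y_0\>\subseteq T_x$. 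Consequently every point of $X(\xi_0)$ that is not collinear with $x$ lies in $T_x$ and spans a non-singular line with $x$, hence is itself a wrinkle; so a whole \EQ\ worth of wrinkles sits inside $T_x$, and it suffices to push a single one of them into the span of two tangent spaces.

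Next I would reduce the assertion to a position statement inside $\xi_0$. For two \EQ s $H_1,H_2$ through $x$ write $W=\<T_x(H_1),T_x(H_2)\>$ and $S=W\cap\xi_0$, and I claim that $S\not\subseteq T_x(\xi_0)$ already forces a wrinkle into $W$. Since $S\ni x$ and $S\not\subseteq T_x(\xi_0)$, there is a line $L\subseteq S$ through $x$ with $L\not\subseteq T_x(\xi_0)$; by the standard fact recalled before Lemma~\ref{quadric}, $L$ meets $X(\xi_0)$ in exactly two points $x$ and $w$, and $w\notin T_x(\xi_0)$ (otherwise $L=\<x,w\>\subseteq T_x(\xi_0)$). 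Thus $w\in X(\xi_0)\cap S\subseteq W$ is not collinear with $x$, i.e.\ $w$ is a wrinkle lying in $W$. Hence the whole lemma reduces to producing two \EQ s $H_1,H_2$ through $x$ with $W\cap\xi_0\not\subseteq T_x(\xi_0)$.

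To analyse this I would pass to the quotient $V=T_x/T_x(\xi_0)$. By (MM3) and $\dim T_x(\xi_0)=d$ we have $\dim V\le d-1$; the image $\bar y_0$ of $\xi_0$ is a single point of $V$, and the condition $W\cap\xi_0\not\subseteq T_x(\xi_0)$ is exactly $\bar y_0\in\<\overline{T_x(H_1)},\overline{T_x(H_2)}\>$. Because each intersection $T_x(H)\cap T_x(\xi_0)$ has dimension at most $\lfloor d/2\rfloor$, every image $\overline{T_x(H)}$ has dimension at least $\lceil d/2\rceil-1$ in $V$, and these images span $V$. No single image can contain $\bar y_0$: for any \EQ\ $H$ we have $T_x(H)\cap\xi_0\subseteq H\cap\xi_0\subseteq T_x(\xi_0)$ by (MM2), since $H\cap\xi_0$ is a singular subspace of $X(\xi_0)$ through $x$. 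So one genuinely needs two images, and the estimate $2(\lceil d/2\rceil-1)+1\ge\dim V$ shows that two images in sufficiently general position span all of $V$ and therefore contain $\bar y_0$; note this inequality is tight for even $d$, so general position really is essential.

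The hard part is therefore the existence of two \EQ s whose tangent spaces sit in this general position, equivalently whose images in $V$ span it. I expect to obtain them from the residue $(X_x,\Xi_x)$ of split type $d-2$ supplied by Lemma~\ref{induction}, together with the quadric facts of Lemmas~\ref{quadric}, \ref{paraquad} and~\ref{n-2}, which let one choose \EQ s meeting only in a controlled singular subspace, keeping $T_x(H_1)\cap T_x(H_2)$ small while still reaching the direction $\bar y_0$. This is precisely where a short case analysis on $d$ and the hypothesis $2\le d\le 9$ enter: for these values the residue is small enough that a suitable pair is always available, whereas for $d\ge 10$ the spans of two tangent spaces become too thin to be forced through $\bar y_0$, consistent with the fact that those cases are eliminated by the separate non-existence argument.
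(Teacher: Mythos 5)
Your reduction is correct as far as it goes, and it is a genuinely cleaner way to organize the first half of the argument: the paper's proof also begins by observing that the span of the symp $H=X([x,z])$ of a wrinkle $z$ lies inside $T_x$, but your reformulation in the quotient $V=T_x/T_x(\xi_0)$, the observation that any point of $X(\xi_0)\cap\<T_x(H_1),T_x(H_2)\>$ outside $T_x(\xi_0)$ is automatically a wrinkle in the required position, and the remark (via (MM2) and the fact that a singular subspace of $X(\xi_0)$ through $x$ lies in $T_x(\xi_0)$) that a single tangent image can never contain $\bar y_0$, are all sound. The gap is that the proposal stops exactly where the lemma's actual content begins: you never produce the two symps. ``Two images in sufficiently general position span all of $V$'' is not an argument, and your last paragraph explicitly defers the existence of such a pair to an unspecified case analysis. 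That existence is what the paper's entire proof consists of: it chooses a symp $H'$ through $x$ meeting $H$ in a subspace of minimal dimension, notes that this dimension lies between $1$ and $\lfloor d/2\rfloor\le 4$ (this, and nothing about spans becoming thin, is where $2\le d\le 9$ is used), and then in four hand-crafted cases builds auxiliary points of $X\cap T_x$ and a third symp $H''$ so that a suitably re-chosen wrinkle lies in $\<T_x(H''),T_x(H')\>$. Nothing in your proposal substitutes for those constructions.

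Moreover, two concrete features of your sketched plan would fail if pushed. First, ``general position'' cannot mean disjointness of the images: for odd $d$ each image has dimension at least $\lceil d/2\rceil-1=(d-1)/2$, so two disjoint images would span a subspace of dimension at least $d$ inside $V$, whose dimension is at most $d-1$; and for even $d$ disjointness forces $\dim V=d-1$ exactly and both intersections $T_x(H_i)\cap T_x(\xi_0)$ to be maximal singular subspaces. So the spanning claim would have to be established with overlapping images, and your dimension count no longer yields it for free. Second, your explanation of the hypothesis $d\le 9$ is inconsistent with your own estimates: the inequality $2(\lceil d/2\rceil-1)+1\ge\dim V$ holds for every $d$, so nothing ``becomes too thin'' at $d=10$; the real obstruction for $d\ge 10$ is that the minimal intersection of two symps can have dimension $5$ or more, outrunning the paper's case analysis. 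Finally, be careful with the appeal to Lemma~\ref{induction}: at this stage the residue $(X_x,\Xi_x)$ is only known to be a pre-Mazzocca-Melone set, and (MM3) for residues (Corollary~\ref{MMinductive}) is itself a consequence of the present lemma, so any argument in the residue that implicitly bounds tangent spaces there would be circular.
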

\begin{proof}
Let $z$ be a wrinkle of $x$.
Let $H=X([x,z])$. Since $\<T_x(H),z\>=\<H\>$, we know that $H\subseteq T_x$.  Let $H'$ be a \EQ\  through $x$ which intersects $H$ in a subspace of minimal dimension. Since $H,T_x(H')\subseteq T_x$, this dimension is at least 1. Since $d\leq 9$, there are four cases.

We first observe that $$H\cap T_x(H')\subseteq \<H\>\cap\<H'|>\subseteq H\cap H'\subseteq H\cap T_x(H'),$$ hence $H\cap T_x(H')=H\cap H'$. 

\textbf{Case 1:  $H$ intersects $H'$ in a line $L$.} \\ In this case $\<H,T_x(H')\>=T_x$ by a simple dimension argument, which also shows $\dim(T_x)=2d$. Since $L$ is a singular line, we see that $\dim\<T_x(H),T_x(H')\>=2d-1$. Hence we can pick a point $u\in (X\cap T_x)\setminus \<T_x(H),T_x(H')\>$, with $\<x,u\>$ singular. Let $H''$ be a symp through $x$ and $u$. Since $\dim\<u,H\>=d+2$, we see that $\dim(\<u,H\>\cap T_x(H'))=2$. Consequently, there is a point $v\in (\<u,H\>\cap\<T_x(H')\>)\setminus \<H\>$. The line $uv$ inside the $(d+2)$-space $\<u,H\>$ intersects the $(d+1)$-space $\<H\>$ in a point $y'$. If $y'$ were contained in $T_x(H)$, then the line $uv=vy'$  would be contained in $\<T_x(H),T_x(H')\>$, contradicting the choice of $u$. Hence $y'\in \<H\>\setminus T_x(H)$. Then there is a unique point $y\in H\setminus T_x(H)$ on the line $xy'$ (and $y$ is a wrinkle of $x$). By replacing $u$ with an appropriate point on $xu$, we may assume that $y'=y$. Hence $y\in uv\subseteq\<T_x(H''),T_x(H')\>$ and the lemma is proved.

\textbf{Case 2: $H$ intersects $H'$ in a plane $\pi$.}  \\
In this case $\dim\<H,T_x(H')\>=2d-1$. If $\dim(T_x)=2d-1$, then the lemma follows from an argument completely similar to the one of Case 1. So we may assume $\dim(T_x)=2d$.  Note that $\dim\<T_x(H),T_x(H')\>=2d-2$. Hence there exist two points $v_1,v_2\in X$ such that $xv_1$ and $xv_2$ are singular lines, $v_1\notin \<T_x(H),T_x(H')\>$, and $v_2\notin\<v_1,T_x(H),T_x(H')\>$. This implies that $v_1v_2$ does not intersect $\<T_x(H),T_x(H')\>$.

If $v_1v_2$ intersects $\<H\>$ in a point $w$ then Axiom~(MM2) implies $w \in X$. Hence $\<v_1,v_2,x\>$ is a singular plane. Since $v_1v_2 \cap T_{x}(H)=\emptyset$, $w \in H\setminus T_x(H)$, contradicting the fact that $xw$ is a singular line.

Hence $\dim(\<v_1,v_2,H\>)=d+3$, and so, putting $U=\<v_1,v_2,H\>$ we have $\dim (U \cap T_x(H'))\geq 3$. Consequently we find a point $u$ in $T_x(H')\setminus\<H\>$, with $u\in U$. In $U$, the plane $\<u,v_1,v_2\>$ intersects the space $\<H\>$ in some point $y$. If $y\in T_x(H)$, then the point $uy\cap v_1v_2$ belongs to $\<T_x(H),T_x(H')\>$, contradicting the choices of $v_1,v_2$. Hence $y\in \<H\>\setminus T_x(H)$. As in Case 1, we can rechoose $u$ on $xu$ so that $y\in H$, and so $y$ is a wrinkle of $x$. It follows that $y\in\<T_x(H'),T_x(H'')\>$, concluding Case 2.

\textbf{Case 3: $H$ intersects $H'$ in a $3$-space $\Sigma$.} \\
Here again, the case where $\dim(T_x)=2d-2$ is completely similar to Case 1, and if $\dim(T_x)=2d-1$, then we can copy the proof of Case 2, adjusting some dimensions.

So assume $T_x$ is $2d$-dimensional. Since  $\dim\<T_x(H),T_x(H')\>=2d-3$, we can find points $v_1,v_2,v_3\in X$ such that $xv_i$ is a singular line, $i=1,2,3$, and  $\<v_1,v_2,v_3,T_x(H),T_x(H')\>=T_x$. By Lemma~\ref{lemma1} and Lemma~\ref{lemma2}, there is a \EQ\ $H_{12}$ containing $x,v_1,v_2$ (note $d\geq 6$ by the definition of $\Sigma$). Since $v_1v_2$, which is contained in $T_x(H_{12})$, does not meet  $\<T_x(H),T_x(H')\>$, the latter intersects $T_x(H_{12})$ in a subspace of dimension at most $d-2$. Lemma~\ref{n-2} implies that we can find a singular line $L$ in $T_x(H_{12})$ skew to  $\<T_x(H),T_x(H')\>$. We may now assume that
$L=v_1v_2$, and possibly rechoose $v_3$ so that we still have $\<v_1,v_2,v_3,T_x(H),T_x(H')\>=T_x$. Note that every point $v$ of the plane $\pi=\<v_1,v_2,v_3\>$ is contained in a quadratic space $\xi$, since $v_3$ together with any point of $v_1v_2$ is contained in one. Hence, as in Case 2 above, $\pi$ does not meet $\<H\>$. Note also that $v$ in fact belongs to $T_x(\xi)$.

So, the space $U=\<\pi,H\>$ is $(d+4)$-dimensional and intersects $T_x(H')$ in a subspace of dimension at least $4$. Similarly as above in Case 2, we find a point $u\in U$ in $T_x(H')\setminus\<H\>$, and the $3$-space $\<u,\pi\>$ intersects $\<H\>$ in some point $y$, which does not belong to $T_x(H)$, and which we may assume to belong to $X$ (by rechoosing $u$ on $xu$). As we noted above, the point $uy\cap\pi$ is contained in $T_x(\xi)$, for some quadratic space $\xi$, and so $y\in \<T_x(\xi),T_x(H')\>$, completing Case 3.

\textbf{Case 4: $H$ intersects $H'$ in a $4$-space $\alpha$.}\\
As before, we may again assume that $\dim(T_x)=2d$, as otherwise the proofs are similar to the previous cases. Note that $\dim\<T_x(H),T_x(H')\>=2d-4$.

Now, similarly to Case 3, we first find points $v_1,v_2\in X$ such that $\<x,v_1,v_2\>$ is a singular plane and $\dim\<v_1,v_2,T_x(H),T_x(H')\>=2d-2$, and then we find, by the same token, points $v_3,v_4\in X$ such that $\<x,v_3,v_4\>$ is a singular plane and $\<v_1,v_2,v_3,v_4,T_x(H),T_x(H')\>=T_x$. Also as in Case 3, every point of the $3$-space $\Sigma=\<v_1,v_2,v_3,v_4\>$ is contained in a quadratic space. But then we can copy the rest of the proof of Case 3 to complete Case 4.

This completes the proof of the lemma.
\end{proof}

Combining Lemma~\ref{generation} with Lemma~\ref{badpoints}, we obtain the following consequence.

\begin{lemma}\label{E6completed}
If $2\leq d\leq 9$, then every point $x\in X$ is smooth.
\end{lemma}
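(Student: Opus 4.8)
The plan is to obtain this statement immediately, by contradiction, as the formal conjunction of the two preceding lemmas. Suppose, contrary to the assertion, that some point $x\in X$ fails to be smooth; that is, $x$ admits at least one wrinkle. Since we are assuming $2\leq d\leq 9$, Lemma~\ref{generation} applies and guarantees the existence of a wrinkle $y$ of $x$ together with two symps $H_1,H_2\in\Xi$ through $x$ for which $y\in\langle T_x(H_1),T_x(H_2)\rangle$.

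But this is precisely the configuration forbidden by Lemma~\ref{badpoints}, which states that no wrinkle of $x$ can be contained in the span of two tangent spaces $T_x(\xi_1)$ and $T_x(\xi_2)$ attached to distinct members $\xi_1\neq\xi_2$ of $\Xi$. Taking $\xi_1=H_1$ and $\xi_2=H_2$, we reach a contradiction. Hence no non-smooth point can exist, and every point of $X$ is smooth.

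I do not expect any genuine obstacle in this step itself: all the delicate work has already been carried out in the two cited lemmas. Lemma~\ref{generation} performs the real case analysis (splitting according to whether $H$ meets $H'$ in a line, a plane, a $3$-space, or a $4$-space, and in each case manufacturing a wrinkle lying in the span of two tangent spaces), while Lemma~\ref{badpoints} supplies the geometric obstruction that makes such a wrinkle impossible. The only point worth flagging is that the hypothesis $2\leq d\leq 9$ is exactly the range in which Lemma~\ref{generation} is valid, so that its conclusion is legitimately available here; outside that range a different argument would be needed. The payoff of the statement is that smoothness of all points bounds the dimension of the tangent spaces in the residues $(X_x,\Xi_x)$, which is what makes the inductive strategy of Lemma~\ref{induction} usable for the larger values of $d$.
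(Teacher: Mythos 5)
Your proposal is correct and is exactly the paper's argument: the paper derives this lemma as an immediate consequence of combining Lemma~\ref{generation} (which produces, for any non-smooth point, a wrinkle in the span of two tangent spaces) with Lemma~\ref{badpoints} (which forbids such a configuration). Nothing further is needed.
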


This implies the following inductive result.

\begin{cor}\label{MMinductive}
If $6\leq d\leq 9$, for all $p\in X$, the residue $(X_{p},\Xi_{p})$ is a Mazzocca-Melone set of split type $d-2$.
\end{cor}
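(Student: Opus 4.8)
The plan is to observe that, by Lemma~\ref{induction} together with Lemma~\ref{proper}, the residue $(X_p,\Xi_p)$ is already a \emph{proper} pre-Mazzocca-Melone set of split type $d-2$; so the only axiom that remains to be established is (MM3). The single extra tool available in the range $6\le d\le 9$ is Lemma~\ref{E6completed}: \emph{every} point of $X$ is smooth. This is exactly what keeps the dimension bookkeeping tight, and the whole proof should rest on it. (The restriction $d\ge 6$ guarantees that the residue type $d-2$ is at least $4$, while $d\le 9$ guarantees smoothness; outside this range the present argument is known to break, as announced in the text.)

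First I would set up the geometric dictionary for (MM3) of the residue. Fix $\bar x\in X_p$, lying on the singular line $L=\<p,x\>$ with $x\in X$. The symps of $(X_p,\Xi_p)$ through $\bar x$ are exactly the spaces $\eta_\xi:=C_p\cap T_p(\xi)$ with $\xi$ running over the \EQ s of $X$ containing $L$; each $\eta_\xi$ is the projection from $p$ onto $C_p$ of $\xi$, a split quadric of type $d-2$. I would then identify the tangent space of $\eta_\xi$ at $\bar x$: since the polar of the singular line $L$ in the quadric $\xi$ is precisely $T_p(\xi)\cap T_x(\xi)$ (a $(d-1)$-space containing $L$), projecting the tangent cone along $L$ gives $T_{\bar x}(\eta_\xi)=\proj_p\bigl(T_p(\xi)\cap T_x(\xi)\bigr)$, a $(d-2)$-space. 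Consequently the tangent space of the residue at $\bar x$ is $\proj_p(S_L)$, where $S_L:=\bigl\<\,T_p(\xi)\cap T_x(\xi)\ :\ \xi\ni L\,\bigr\>$, and because $p\in L\subseteq S_L$ we get $\dim\proj_p(S_L)=\dim S_L-1$. Thus (MM3) for the residue is equivalent to the single inequality $\dim S_L\le 2d-3$.

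To bound $S_L$ I would exploit smoothness through the clean consequence that, for every \EQ\ $\xi\ni x$, one has $\<\xi\>\cap T_x=T_x(\xi)$ (otherwise $\<\xi\>\subseteq T_x$ would produce wrinkles of $x$, contradicting Lemma~\ref{E6completed}); the symmetric statement holds at $p$. Since $S_L\subseteq T_p\cap T_x$ and both $T_p,T_x$ have dimension at most $2d$ by (MM3) for $X$, the task reduces to showing that smoothness forces $T_p\cap T_x$ — and in fact the smaller span $S_L$ — to miss at least three independent directions of, say, $T_x$. Here I would pick an auxiliary \EQ\ $\xi_0\ni x$ with $L\not\subseteq\xi_0$ (available by properness via Lemma~\ref{proper}) and use the smoothness identities above, possibly in combination with Lemma~\ref{badpoints}, to count how much of $T_x(\xi_0)$ can lie in $T_p$, thereby pinning the codimension of $S_L$ in $T_x$ at exactly $3$ in the extremal case.

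The main obstacle is precisely this final estimate $\dim S_L\le 2d-3$. The danger is that a large intersection $T_p\cap T_x$ could inflate $S_L$ past the bound; smoothness is what prevents this, since without it $T_x$ could absorb points of symps that are non-collinear with $x$ (wrinkles), enlarging $T_p\cap T_x$ and destroying the count. Getting the estimate to hold uniformly for all $6\le d\le 9$ — including the delicate equality case realised by $\mathcal{E}_{6,1}(\K)$ and $\mathcal{D}_{5,5}(\K)$, where $\dim S_L=2d-3$ exactly and the residue tangent space attains dimension $2(d-2)$ — is the technical heart of the argument, and it is where the smoothness of both endpoints $p$ and $x$ must be used in full.
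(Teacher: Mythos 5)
Your setup is correct and, up to swapping the roles of $p$ and $x$, it is the same strategy the paper uses: by Lemma~\ref{induction} only (MM3) remains, the residue tangent space at $\bar x$ is the projection from $p$ of $S_L\subseteq T_p\cap T_x$, and (MM3) for the residue is equivalent to $\dim (T_p\cap T_x)\le 2d-3$ for every $x$ collinear with $p$; smoothness (Lemma~\ref{E6completed}) is indeed the engine.

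The genuine gap is that the decisive estimate is never proved: you reformulate it, announce a plan ("count how much of $T_x(\xi_0)$ can lie in $T_p$"), and then explicitly defer it as "the main obstacle" and "the technical heart". But that estimate \emph{is} the corollary; everything preceding it is bookkeeping. Moreover, the one smoothness consequence you actually derive, $\<\xi\>\cap T_x=T_x(\xi)$ for $\xi\ni x$, is codimension-one information about a single symp and cannot by itself produce the required codimension three, so the proposal as written does not close. The paper closes it as follows: take a symp $H$ through $p$ with $x\notin H$ (Lemma~\ref{proper}); smoothness forces every point of $H\cap T_x$ to be collinear with $x$, so by Lemma~\ref{subspace} these points lie in a singular subspace of $H$; Lemmas~\ref{quadric} and~\ref{paraquad} (every $(n+1)$-space of $\<H\>$ carries a non-collinear pair of points of $H$) then give $\dim(T_x\cap\<H\>)\le\lfloor\frac{d+1}{2}\rfloor$; the same smoothness/Lemma~\ref{subspace} argument shows $T_x\cap\<H\>\subseteq T_p(H)$, so a complement of it inside $T_p(H)$ has dimension at least $\lceil\frac{d-1}{2}\rceil-1$, lies in $T_p$, and misses $T_x$, whence $\dim(T_p\cap T_x)\le 2d-\lceil\tfrac{d-1}{2}\rceil=\lfloor\tfrac{3d+1}{2}\rfloor$. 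This is $\le 2d-3$ exactly when $d\ge 6$, which is the true source of the lower bound on $d$ --- not, as you suggest, the requirement that the residue type be at least $4$ --- while $d\le 9$ enters only through smoothness (Lemma~\ref{generation}). Without this chain (smoothness $\rightarrow$ Lemma~\ref{subspace} $\rightarrow$ Lemmas~\ref{quadric}/\ref{paraquad} $\rightarrow$ complement inside the tangent hyperplane $T_p(H)$), the inequality you correctly identify as necessary remains unestablished.
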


\begin{proof}
In view of Lemma~\ref{induction}, is suffices to show that $(X_{p},\Xi_{p})$ satisfies Axiom~(MM3).
Let $x\in X$ be collinear with $p$. By Lemma~\ref{proper} there exists a \EQ\ $H$ through $p$ not containing $x$. Put $W_x=T_x\cap\<H\>$. Since the points of $H$ collinear to $x$ are contained in a maximal singular subspace of $H$, Lemmas~~\ref{quadric},~\ref{paraquad} and~\ref{E6completed}, imply that $\dim(W_x)\leq \lfloor \frac{d+1}{2}\rfloor$.   Hence we can select a subspace $W'_x$ in $T_p(H)$ complementary to $W_x$ (which is indeed contained in $T_p(H)$). Since $\dim(W'_x)\geq\lceil \frac{d-1}{2} \rceil-1$, we see that $\dim(T_x\cap T_p)\leq \lfloor \frac{3d+1}{2} \rfloor$. Hence, if $x'\in X_p$ corresponds to $x$, then $T_{x'}$ has dimension at most $\lfloor \frac{3d+1}{2} \rfloor-1$. For $d=6,7,8,9$ this yields $8,10,11,13$, respectively, which finishes the proof.
\end{proof}

The strategy of the proof of Main Result 1 for $d\in\{4,5\}$ will also be to show that the residue satisfies Axiom (MM3). However, this will need some very particular arguments. Concerning the cases $d\geq 10$, the arguments to prove Lemma~\ref{generation} cannot be pushed further to include higher dimensions. Hence we will need  different arguments, which, curiously, will not be applicable to any of the cases $d\leq 9$; see Proposition~\ref{casengeq5}.

\begin{rem}\rm\label{secant}
The varieties in the conclusion of the Main Corollary are analogues of complex Severi varieties, as already mentioned. The varieties in the conclusion of the Main Result that are not in the conclusion of the Main Corollary have, in the complex case, as secant variety the whole projective space. It can be shown that this remains true over an arbitrary field; this is rather easy for the Segre and line Grassmannian varieties, and slightly more involved for the half-spin variety $\mathcal{D}_{5,5}$, although the calculations are elementary, but tedious, using \cite{Wells}. Likewise, the secant variety of the secant variety of a complex Severi variety is the whole projective space, and this remains true for our analogues over an arbitrary field. This now has the following consequence.
\end{rem}

\begin{prop}\label{universal}
Let $X$ be one of the varieties in the Main Result for $d\geq 2$, and regard $X$ as a set of points. Let $\Gamma=(X,\cL)$ be the point-line geometry obtained from $X$ by collecting (in the set $\cL$) all projective lines entirely contained in $X$, and considering the natural incidence relation. Let $\Xi'$ be the family of all maximal split quadrics contained in $X$. Then there is a unique projective embedding of $\Gamma$ with the property that any two projective subspaces generated by members of $\Xi'$ intersect in a subspace all points of which belong to $X$.
\end{prop}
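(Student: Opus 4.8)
The plan is to establish existence and uniqueness of the embedding separately, treating $\Gamma$ as an abstract point-line geometry and recovering its embedding from intrinsic data. First I would fix one of the listed varieties $X\subseteq\PAG^N(\K)$, which gives a distinguished embedding $e_0$ of $\Gamma=(X,\cL)$ into $\PAG^N(\K)$, and observe that under $e_0$ the members of $\Xi'$ are exactly the symps of the corresponding Mazzocca-Melone set, so the intersection property in the statement is simply a reformulation of Axiom (MM2). Thus existence is immediate: $e_0$ itself is an embedding with the required property. The content is therefore entirely in the \emph{uniqueness} claim, and the natural strategy is to show that any embedding $e$ with the stated intersection property is the \emph{absolutely universal} (or dominant) embedding of $\Gamma$, and then to identify that universal embedding with $e_0$ up to projective equivalence.

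The key steps I would carry out in order are as follows. First, I would record that each member of $\Xi'$ is a split quadric and hence, as an abstract geometry, is a polar space whose embedding is rigid: the Veronesean/quadric structure forces that the restriction of any embedding $e$ to a single member $H\in\Xi'$ coincides with the natural quadric embedding, up to the action of the projective group. Second, I would use the intersection hypothesis to glue these local embeddings together. Concretely, the condition that $\langle e(H_1)\rangle\cap\langle e(H_2)\rangle$ consists of points of $X$ whenever $H_1,H_2\in\Xi'$ guarantees that the linear relations imposed among the images of collinear points are exactly those visible inside single symps; so no extra linear dependencies can be introduced by $e$ beyond those already present in $e_0$. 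Third, invoking the secant-variety observation from Remark~\ref{secant} — that the (iterated) secant variety of $X$ fills the whole ambient space — I would argue that $X$ together with these quadric spans generates enough points to pin down all linear relations, whence $\langle e(X)\rangle$ has the same dimension as $\langle e_0(X)\rangle=\PAG^N(\K)$. Finally, a standard argument for embeddings of point-line geometries then produces a (necessarily injective) linear map intertwining $e$ and $e_0$, yielding the projective equivalence and hence uniqueness.

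The main obstacle I expect is the gluing step: turning the purely set-theoretic intersection condition into a statement about \emph{linear} dependencies strong enough to control the whole span. A single symp fixes its own embedding, but one must show that the way two symps are identified along their common singular subspace is forced, and that this local compatibility propagates to a global linear isomorphism. This is exactly where the hypothesis ``any two subspaces generated by members of $\Xi'$ intersect in a subspace all of whose points lie in $X$'' does the work, because it prevents the images of two symps from meeting in spurious points and thereby fixes how their spans sit relative to one another; the delicate part is verifying that the connectivity of the collinearity graph of $\Gamma$, combined with the secant-variety fullness, lets one chain these pairwise compatibilities together without monodromy. For the higher-rank cases $\mathcal{D}_{5,5}$ and $\mathcal{E}_{6,1}$ this chaining is the genuinely nontrivial point, and I would lean on the fact — implicit in the Main Result and the classification of symps — that the incidence geometry is simply connected in the relevant sense, so that the locally defined identifications admit no obstruction to being realized by a single global linear map.
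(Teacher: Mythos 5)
Your existence step is fine (it is just Axiom (MM2), verified for these varieties in the last section of the paper), and you have correctly identified the two relevant ingredients: absolute universality of the natural embedding and the secant-variety facts of Remark~\ref{secant}. But the uniqueness argument has a genuine gap at precisely the point you flag as the ``main obstacle''. The proof needs the statement that \emph{every} projective embedding of $\Gamma$ is a quotient (projection) of a single absolutely universal embedding, and that this universal embedding is the natural one $e_0$. The paper gets this by citation: Zanella \cite{Zan} for the Segre varieties ($d=2$), Havlicek \cite{Hav} for the line Grassmannians ($d=4$), Wells \cite{Wells} for the half-spin variety ($d=6$), and Cooperstein--Shult \cite{Coo-Shu} together with Kasikova--Shult \cite{KS} for $\cE_{6,1}$ ($d=8$). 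You instead propose to manufacture the comparison between $e$ and $e_0$ by hand: rigidity of the embedding of each symp, gluing along the intersection hypothesis, and a ``no monodromy'' claim justified by simple connectivity ``in the relevant sense''. That is not a proof: simple connectivity of the associated building yields no linear-algebraic control over how the spans $\<e(H)\>$, $H\in\Xi'$, sit inside $\<e(X)\>$, and the assertion that the intersection hypothesis forbids ``extra linear dependencies'' is exactly the statement to be proved, not an observation. What your sketch attempts to re-derive is the absolute universality of these embeddings, which is the substantial content of the four cited theorems. Moreover, there is no ``standard argument'' producing a linear map intertwining two arbitrary embeddings of the same span dimension; the canonical morphism only exists from a dominant embedding downward, which is again the missing universality.

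Your use of Remark~\ref{secant} is also not the one that makes the proof work. Knowing that $\dim\<e(X)\>=N$ would not by itself yield an isomorphism with $e_0$. The paper's argument runs in the opposite direction: once every embedding is known to be a projection of $e_0$, the remark gives a dichotomy on possible projection centers. For the varieties whose secant variety already fills the ambient space ($\cS_{1,2}$, $\cS_{1,3}$, $\cG_{4,1}$, $\cD_{5,5}$), every candidate center lies on a secant line, so a proper projection is not even injective on $X$; hence $e_0$ is the only embedding whatsoever. For the Severi-type varieties ($\cS_{2,2}$, $\cG_{5,1}$, $\cE_{6,1}$) proper projections do exist, but since the secant variety of the secant variety fills the space, every center lies in the span $\<\<H_1\>,\<H_2\>\>$ of two symps; projecting then strictly lowers the dimension of that span, forcing $\<e(H_1)\>\cap\<e(H_2)\>$ to grow and acquire points outside $X$, in violation of the hypothesis. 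This dichotomy, which your single-pass dimension count does not see, is what converts the secant-variety facts into uniqueness; without it, and without the cited universality theorems, your chain of pairwise compatibilities never closes.
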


\begin{proof}
Every geometry $\Gamma$ has a natural embedding as given in Section~\ref{examples}. This embedding is always the absolute universal one, meaning that every other embedding is a projection of the absolute universal one. This follows by results of Zanella ($d=2$), see Theorem~3 of \cite{Zan}, Havlicek ($d=4$), see \cite{Hav}, Wells ($d=6$), see  Theorem~5 of \cite{Wells}, and combining Theorem 6.1 of Cooperstein \& Shult \cite{Coo-Shu} (which bounds the dimension of any embedding of $\cE_{6,1}$ to $26$) and Paragraph 4.11.1 of Kasikova \& Shult \cite{KS} (which shows the existence of the absolute universal embedding for $d=8$).  Now either the secant variety is the whole projective space (in which case there are no proper projections), or the secant variety of the secant variety is the whole projective space (in which case every projection reduces the dimension of the space spanned by some pair of maximal split quadrics, contradicting Axiom~(MM2)). This shows the assertion. 
\end{proof}

In view of the previous proposition, is suffices to show that the points and singular lines of a Mazzocca-Melone set of split type constitute the geometries related to one of the varieties in the Main Result, and only those. 
The geometries $\Gamma$ defined above related to a line Grassmannian variety, half-spin variety or variety $\cE_{6,1}(\K)$ will be referred to as a \emph{line  Grassmannian geometry, half-spin geometry} or \emph{geometry $\cE_{6,1}(\K)$}, respectively. These geometries are related to (Tits-)buildings by defining additional objects in the geometry; it is in this way that we will recognize the varieties of our Main Result. This is the content of the next section.

\section{Recognizing the line Grassmannian, half-spin and $\cE_{6,1}$ geometries}\label{exist}

We start by recognizing the line Grassmannian geometries. Let  $(X,\Xi)$ be a proper Mazzocca-Melone set of split type $4$ in $\PAG^{N}(\K)$. For each point $p\in X$, we denote the residue at $p$ by $(X_p,\Xi_p$, and we also denote the subspace spanned by $X_p$ by $C_p$, as before (see Lemma~\ref{induction}). We will first recognize these residues, and then the whole geometry. To that aim, we need one more lemma.

\begin{lemma}\label{Nofour}
Let $p\in X$. If $k\geq 4$, then there are no singular $k$-spaces in the residue $(X_p,\Xi_p)$.
\end{lemma}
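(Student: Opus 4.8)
The plan is to recast the statement as a bound on the singular subspaces of $X$ itself and then to collide it with Axiom (MM3). First I would record the dictionary between the residue and $X$: if $S\subseteq C_p$ is a singular $k$-space of $(X_p,\Xi_p)$, then each of its points lies on a singular line of $X$ through $p$, so the cone $\langle p,S\rangle$ is a singular $(k+1)$-space of $X$; conversely a singular $(k+1)$-space of $X$ through $p$ meets $C_p$ in a singular $k$-space of the residue. Hence the lemma is equivalent to the assertion that $X$ has no singular $5$-space, and I would argue by contradiction, fixing a singular $5$-space $M$ of $X$ and a point $x\in M$.

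Next I would pin down the local geometry at $x$. Every singular line of $M$ through $x$ lies in a symp (so that, spanning $M$ by such lines, $M\subseteq T_x$); since $\dim T_x\le 8$ by (MM3), every singular subspace through $x$ is finite-dimensional, so Lemma~\ref{lemma2} applies. As $d=4$, the point $x$ is smooth by Lemma~\ref{E6completed}. Moreover, for any plane $\pi\subseteq M$ through $x$, Lemma~\ref{lemma2} places $\pi$ in a symp $H$; because $H$ is a hyperbolic quadric in $\PAG^5(\K)$ whose maximal singular subspaces are planes, the intersection $M\cap H$ is a singular subspace of $H$ of dimension at most $2$, whence $M\cap H=\pi$. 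Thus each plane of $M$ through $x$ is cut out on $M$ by a symp, and each such symp contributes a tangent $4$-space $T_x(H)\supseteq\pi$ inside $T_x$.

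The heart of the argument is a dimension count that forces $\dim T_x\ge 9$. I would pick two distinct planes $\pi_1,\pi_2\subseteq M$ through $x$ meeting in a line $L$ (so $\langle\pi_1,\pi_2\rangle$ is a singular $3$-space of $M$), together with symps $H_1\supseteq\pi_1$, $H_2\supseteq\pi_2$ as above. Both $T_x(H_1)$ and $T_x(H_2)$ contain $L$, and by the estimate $\dim(T_x(H_1)\cap T_x(H_2))\le\lfloor d/2\rfloor=2$ used in the proof of Lemma~\ref{badpoints}, they span a subspace of dimension at least $6$. Provided one can guarantee that $T_x(H_1)\cap T_x(H_2)$ reduces to $L$ and that $M$ meets $\langle T_x(H_1),T_x(H_2)\rangle$ in no more than $\langle\pi_1,\pi_2\rangle$, the subspace $\langle M,T_x(H_1),T_x(H_2)\rangle\subseteq T_x$ has dimension $5+7-3=9$, contradicting $\dim T_x\le 8$; should the two tangent spaces span only dimension $6$, I would bring in a third plane and symp of $M$ in general position and iterate. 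Equivalently, isolating in each $T_x(H_i)$ the line $s_i$ complementary to $\pi_i$, each $s_i$ is transverse to $M$, and any two such transverse lines that are independent modulo $M$ already give $\dim\langle M,s_1,s_2\rangle=9$.

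The main obstacle is exactly this independence, i.e.\ preventing the tangent spaces from collapsing into $M$ or into each other. The natural device is a closedness statement for symps: a singular line of $M$ through $x$ that lies in $T_x(H)$ must already lie in $H$ (equivalently $\langle H\rangle\cap X=H$). To establish it I would take a hypothetical extra point $z\in(M\cap T_x(H))\setminus\pi$; by Lemma~\ref{subspace} the points of $H$ collinear with $z$ form a singular subspace, which here is forced to be precisely the plane $\pi$, while $z\in T_x(H)$, and I expect to eliminate this configuration from the geometry of the tangent cone of the hyperbolic quadric together with the Quadrangle Lemma and the smoothness of $x$. Securing this closedness, and then choosing $\pi_1,\pi_2$ and their symps generically enough that the transverse directions are genuinely independent, is where the real work lies; granting it, the final count against (MM3) is immediate.
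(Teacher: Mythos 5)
Your reduction to ``$X$ contains no singular $5$-space $M$'' and your preliminary observations are sound: $M\subseteq T_x$, every plane $\pi\subseteq M$ through $x$ lies in a symp $H$ by Lemma~\ref{lemma2}, and $T_x(H)\cap M=\langle H\rangle\cap M=\pi$ (your ``closedness'' statement is automatic, since $X\cap\langle H\rangle=H$ by definition, so no extra work is hiding there). The genuine gap is the heart of the argument, and it is not merely that your independence conditions, (a) $T_x(H_1)\cap T_x(H_2)=L$ and (b) $M\cap\langle T_x(H_1),T_x(H_2)\rangle=\langle\pi_1,\pi_2\rangle$, are left unproven: they are unprovable. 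First, they are jointly inconsistent with the axioms you are working under. Since $M$ and both tangent spaces lie in $T_x$, and each tangent space meets $M$ exactly in a plane, each has image of vector dimension $2$ in the quotient $T_x/M$, whose vector dimension is at most $9-6=3$ by (MM3); two $2$-dimensional subspaces of a $3$-dimensional vector space always meet nontrivially, whereas (a) and (b) together say precisely that these images are disjoint. So whatever would prove (a) and (b) must use only (MM1), (MM2) and the existence of $M$ --- and that is impossible as well: the line Grassmannian $\mathcal{G}_{6,1}(\K)$ satisfies (MM1) and (MM2) with $d=4$ and possesses singular $5$-spaces (the lines through a fixed point $P$ of $\PAG^{6}(\K)$), yet condition (a) fails there for \emph{every} admissible choice. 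Indeed, with $M$ the $5$-space of lines through $P$ and $x$ a line $\ell\ni P$, the planes of $M$ through $x$ correspond to the $3$-spaces $\Sigma\supseteq\ell$, the symp containing such a plane is the unique Klein quadric $\mathcal{G}_{3,1}(\Sigma)$, and for two $3$-spaces $\Sigma_1,\Sigma_2$ meeting in a plane $\alpha\supseteq\ell$ (i.e.\ $\pi_1\cap\pi_2$ a line) both tangent spaces contain the whole plane of lines lying in $\alpha$, so $T_x(H_1)\cap T_x(H_2)$ is always a plane, never just $L$.

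Your fallback (``bring in a third plane and iterate'') collapses for the same quotient reason: every tangent space of this kind has $2$-dimensional image in the at most $3$-dimensional quotient $T_x/M$, so no collection of them can push $\dim\langle M,T_x(H_1),T_x(H_2),\ldots\rangle$ beyond $8$; the contradiction simply cannot be reached by linear-algebra counts on tangent spaces at a single point of $M$. This is exactly why the paper's proof looks so different: it works inside the residue, distinguishes cases according to how many singular lines not contained in $M$ pass through the points of $M$, and derives contradictions from incidence arguments --- the Quadrangle Lemma together with (MM2) forcing two distinct symps to coincide, or a singular $3$-space to meet $M$. There the dimension bound coming from (MM3) is used only in the opposite direction, namely to force subspaces to intersect (e.g.\ to produce a common $(k+1)$-space $S\supseteq M$ inside $\langle H_x,M\rangle\cap\langle H_y,M\rangle$), not to be directly violated by a span of tangent spaces.
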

\begin{proof}
Suppose otherwise and let $M$ be a maximal singular $k$-space in $(X_p,\Xi_p)$, with $k\geq 4$. Note that, by repeated use of Lemma~\ref{lemma1}, since, by Lemma~\ref{induction}, $(X_p,\Xi_p)$ is a proper pre-Mazzocca-Melone set of split type $2$, no point of $X_p$ outside $M$ is collinear with at least two points of $M$. Hence every singular line meeting $M$ in a point $x$ lies together with any point of $M$ distinct from $x$ in a unique \EQ. We will use this observation freely.

First we claim that  every point $r$ of $M$ is contained in at least two singular lines not contained in $M$. Indeed, by considering any symp through $r$, we see that $r$ is contained in at least one such singular line, say $L_r$. Suppose now that some point $s\in M$ is contained in at least two lines $L_s,L_s'$ not contained in $M$. Then the symps defined by $\<r,s\>$ and $L_s$, and by $\<r,s\>$ and $L_s'$ both contain $\<r,s\>$, so they cannot both contain $L_r$. Consequently there exists a second singular line through $r$ not contained in $M$. Hence we may assume that through every point $r$ of $M$ passes a unique singular line $L_r$ not contained in $M$. 

Consider three points $x,y$ and $z$ spanning a plane $\pi$ in $M$. Consider a point $x'$ on $L_{x}$ different from $x$. Using a symp through $L_x$ and $L_y$, we deduce that  there is a unique point $y'$ on $L_{y}$ collinear with $x'$, and similarly, there is a unique point $z'$ on $L_{z}$ collinear with $x'$. If $x',y'$ and $z'$ were contained in a common line $L$, then the \EQ s $X([x,y'])$ and $X([y',z])$ would share the lines $L$ and $L_y$, and hence coincide, a contradiction since  this symp then contains the plane $\pi$. If $y'$ and $z'$ are not collinear then, by the Quadrangle Lemma, the \EQ\ $X([y',z'])$ contains $x'$, $\<y,y'\>$ and $\<z,z'\>$, and hence it contains three singular lines through $y'$, a contradiction. Hence the plane $\pi':=\langle x',y',z' \rangle$ is singular. Consider next a 3-space $\Pi$ in $M$ containing $\pi$. This leads with similar reasonings to a singular $3$-space $\Pi'$ containing $\pi'$. But then, since $\dim(T_p)\leq 8$, and so $\dim(C_p)\leq 7$,   $\Pi'\cap M$ is non-empty, contradicting our assumption of the previous paragraph. This completes the proof of our claim.

Hence there exist lines $L_{x}$ and $L_{y}$ through $x$ and $y$, respectively, such that the \EQ s $H_{x}$ and $H_y$ containing $L_{x},y$ and $L_y,x$, respectively, are distinct.

As $\dim T_{p}\leq 8$, the $(k+2)$-spaces $\langle H_{x},M\rangle$ and $\langle H_{y},M\rangle$ intersect at least in a $(2(k+2)-7)$-space. Since $2k-3\geq k+1$ (because $k\geq 4$), those spaces share a $(k+1)$-space $S$ containing $M$. Now $S$ intersects $\<H_x\>$ and $\<H_y\>$ in planes containing $\<x,y\>$ and so $S$ contains singular lines $R_x$ and $R_y$ of $H_x$ and $H_y$ meeting $\<x,y\>$ in the single points $p_x$ and $p_y$, respectively. Now
pick two points $r_{x}$ and $r_{y}$ on $R_{x}$ and $R_{y}$, respectively, and not contained in $M$. Then $R=\langle r_{x},r_{y} \rangle\subset S$ intersects $M$ in a point, which necessarily belongs to $X_p$ since $M\subseteq X_p$, and hence is a singular line (this also implies $p_x\neq p_y$). The Quadrangle Lemma applied to $xy,R_{x},R,R_{y}$ now implies that $H_{x}=H_{y}$, a contradiction. The lemma is proved.
\end{proof}

\begin{prop}\label{TSP}
Exactly one of the following holds
\item[$(1)$] For all $p$, $(X_{p},\Xi_{p})$ is isomorphic to the Segre variety $\mathcal{S}_{1,3}(\K)$.
\item[$(2)$] For all $p$, $(X_{p},\Xi_{p})$ is isomorphic to the Segre variety $\mathcal{S}_{1,2}(\K)$.
\end{prop}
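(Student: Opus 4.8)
The plan is to determine the isomorphism type of the residue $(X_p,\Xi_p)$ using the inductive machinery already established. By Corollary~\ref{MMinductive} the hypotheses need adjusting---here $d=4$, so the residue is a \emph{pre}-Mazzocca-Melone set of split type $2$ (via Lemma~\ref{induction}), and we must first upgrade it to a genuine Mazzocca-Melone set by verifying (MM3). Once that is done, the classification of split type $2$ sets (the case $d=2$ of the Main Result, proved in \cite{JSHVM3}) tells us that $(X_p,\Xi_p)$ is projectively equivalent to one of $\cS_{1,2}(\K)$, $\cS_{1,3}(\K)$ or $\cS_{2,2}(\K)$. The crux is therefore twofold: first, rule out $\cS_{2,2}(\K)$, and second, show that the surviving type does not vary with the base point $p$.

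First I would use Lemma~\ref{Nofour} to eliminate $\cS_{2,2}(\K)$. The point is that $\cS_{2,2}(\K)$ contains singular planes of \emph{two} different families, and in particular it has maximal singular subspaces of dimension (at least) $2$ coming from both rulings; but more decisively, one checks that $\cS_{2,2}(\K)$ (sitting as a Mazzocca-Melone set of type $2$) forces the ambient residue to contain configurations that, when pulled back to $X$ via the residue construction, would produce a singular $k$-space with $k\geq 4$ in the residue at a neighbouring point, or directly a singular subspace of dimension $\geq 4$ violating Lemma~\ref{Nofour}. Concretely, the two maximal singular planes of $\cS_{2,2}$ meeting in a line lift to two singular $3$-spaces of $X$ through a common singular plane, which by Lemma~\ref{lemma1} (since $3 > \lfloor 4/2\rfloor = 2$) generate a singular $4$-space; its trace in the appropriate residue contradicts Lemma~\ref{Nofour}. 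Thus only $\cS_{1,2}(\K)$ and $\cS_{1,3}(\K)$ remain, establishing that at each point the residue is one of the two asserted Segre varieties.

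The harder part is the uniformity across all $p$: proving that we cannot have the residue of type $\cS_{1,3}$ at one point and of type $\cS_{1,2}$ at another. The natural strategy is a connectedness argument. The collinearity graph on $X$ is connected (this follows from (MM1) together with Lemma~\ref{subspace}), so it suffices to show that the residue type is locally constant, i.e.\ that collinear points $p$ and $p'$ have residues of the same type. For this I would track an invariant that is visible in both residues simultaneously---namely the dimension of a maximal singular subspace through the line $\<p,p'\>$, or equivalently the dimension $\dim T_p$ of the tangent space, which distinguishes the two cases ($\cS_{1,2}$ and $\cS_{1,3}$ give residues spanning spaces of different dimension, hence force different values of $\dim T_p$ by the span computation in Lemma~\ref{induction}). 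Since the singular line $\<p,p'\>$ induces, in each residue, a point whose local data is inherited from a common symp containing that line, the type read off at $p$ must agree with the type read off at $p'$.

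The main obstacle I expect is the uniformity step rather than the elimination of $\cS_{2,2}$: one must set up the comparison of the two residues along a singular line carefully, because the residue construction at $p$ and at $p'$ project onto different hyperplanes $C_p$ and $C_{p'}$, and the identification of the invariant requires choosing a common symp $H$ through $\<p,p'\>$ and reading both residues off $H$ in a compatible way. I would lean on Lemma~\ref{proper} (to guarantee enough symps through $p$ not through $p'$ and vice versa) and on the precise tangent-space dimension count from the proof of Corollary~\ref{MMinductive} to pin down the numerical invariant. If a direct connectedness argument proves awkward, the fallback is to observe that the two options correspond to genuinely different global geometries (the Grassmannians $\cG_{4,1}$ and $\cG_{5,1}$ differ already in whether $X$ contains maximal singular subspaces of a single dimension or two), and to derive a contradiction from a mixed configuration by exhibiting a symp in $X$ whose own residue structure would be inconsistent.
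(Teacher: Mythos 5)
Your skeleton (use Lemma~\ref{induction} to get a pre-Mazzocca-Melone set of split type $2$, verify (MM3), invoke the $d=2$ classification of \cite{JSHVM3}, then prove uniformity of the type by connectedness) is the same as the paper's, but two of your three steps have genuine problems. First, the verification of (MM3) is not a formality you can defer: it is the bulk of the paper's proof, requiring a two-case analysis (depending on how many of the planes spanned by pairs of the three offending singular lines are singular) with repeated use of the Quadrangle Lemma, Lemma~\ref{lemma1} and Lemma~\ref{Nofour}. Your proposal simply says ``once that is done'' and supplies nothing. Second, your elimination of $\cS_{2,2}(\K)$ rests on a false premise: in $\cS_{2,2}(\K)$ two maximal singular planes never meet in a line --- planes from the same ruling are disjoint and planes from different rulings meet in a single point --- so the configuration you propose to lift does not exist, and the argument collapses. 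It is also unnecessary: since $\dim T_p\leq 2d=8$, the residue lives in $C_p$ of dimension at most $7$, while $\cS_{2,2}(\K)$ spans an $8$-space, so the dimension bound in the $d=2$ classification excludes it for free; this is exactly what the paper does. (Lemma~\ref{Nofour} is indeed used in the paper, but inside Case 2 of the (MM3) verification, not to rule out $\cS_{2,2}$.)

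Third, your uniformity step names a plausible invariant ($\dim T_p$, or the maximal singular dimension) but never supplies the mechanism that transfers it between collinear points; ``reading both residues off a common symp'' cannot work as stated, since the symp traces look identical in an $\cS_{1,2}$-residue and an $\cS_{1,3}$-residue. The paper's mechanism is concrete and short: if $X_r\cong\cS_{1,3}(\K)$, then every point of $X_r$ lies in a singular $3$-space of the residue, so every singular line $\<q,r\>$ lies in a singular $4$-space of $X$; its trace in the residue at $q$ is a singular $3$-space, which $\cS_{1,2}(\K)$ does not contain, forcing $X_q\cong\cS_{1,3}(\K)$. Non-collinear points are then handled through a common symp $X([r,s])$, which contains points collinear with both, applying the collinear case twice. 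Ironically, your (flawed) $\cS_{2,2}$ argument contains the germ of exactly this lifting idea; redirected at the uniformity step, with the correct geometry of $\cS_{1,3}(\K)$, it would complete that part of the proof.
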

\begin{proof}
First we prove that $(X_p,\Xi_{p})$ is a Mazzocca-Melone set of split type~$2$ in the space $C_p$ of dimension at most 7. By Lemma~\ref{induction}, we only have to show that Axiom (MM3) holds.

Suppose that (MM3) is not satisfied. Then there exist a point $x$ contained in a \EQ\ $H$ and  three singular lines $L_{1},L_{2}$ and $L_{3}$ containing $x$ and spanning a 3-space $\Pi$ which intersects $\langle H \rangle$ only in $x$. We distinguish two cases.

\textbf{Case 1.} Suppose that at least two of the three planes determined by $L_{1},L_{2}$ and $L_{3}$, say $\langle L_{1},L_{2} \rangle $ and $\langle L_{2},L_{3} \rangle$, are not singular. Then the \EQ s $H_{{1},{2}}$ (through $L_1$ and $L_2$) and $H_{{2},{3}}$ (through $L_2$ and $L_3$) intersect $H$ only in $x$. Consequently the subspaces $\<H,H_{1,2}\>$ and $\<H,H_{2,3}\>$ are $6$-dimensional and hence have a $5$-space $\Sigma$ containing $H$ and $L_2$ in common, which meets each of $\<H_{1,2}\>$ and $\<H_{2,3}\>$ in a plane $\pi_1$ and $\pi_3$, respectively. These planes contain $L_2$ and hence contain further singular lines of $H_{1,2}$ and $H_{2,3}$, say $R_1$ and $R_3$, respectively.

By assumption we have $(R_1,R_3)\neq(L_1,L_3)$ (the latter pair is not contained in a $5$-space together with $H$ and $L_2$). Suppose now that $\<R_1,R_3\>$ is a $3$-space. Then the latter meets $\<H\>$ in a line $R$, every point of which is on a (necessarily singular) line meeting $R_1$ and $R_3$. Let $r$ be any point of $R$. Then, since $R_1,R_3$ are two non-intersecting lines and $r$ belongs to the 3-space spanned by $R_1$ and $R_3$, there is a line $T_r$ containing $r$ and intersecting $R_1$ and $R_3$ in two points, say $r_1$ and $r_3$, respectively. Any symp through $r_1$ and $r_3$ must meet de symp $H$ in at least the point $r$ (by (MM2)) and so $T_r$ is a singular line. Now. we can choose $r$ such that $T_r$ does not intersect $L_2$, and then we have the quadrangle $T_r,R_1,L_2,R_3$. The Quadrangle Lemma implies that there is a unique (because the lines are distinct) symp $H^*$ containing this quadrangle. But $H^*$ contains $R_1$ and $L_2$, hence must coincide with $H_{1,2}$. Similarly, $H^*=H_{2,3}$. Consequently, $H_{1,2}=H_{2,3}$, a contradiction.  

So we may assume that $R_1$ and $R_3$ meet in a point $y$ of $L_2$ not belonging to $H$. Then the plane $\<R_1,R_3\>$ meets $\<H\>$ in a point $z$, and this point is easily seen to belong to $H\setminus L_2$.  If $\<x,z\>$ is a singular line, then we have the singular planes $\<R_1,R_3\>$ and $\<L_2,z\>$, which have the singular line $\<y,z\>$ in common. Lemma~\ref{lemma1} implies that $\<R_1,L_2,R_3\>$ is a singular $3$-space containing the non-singular plane $\<R_1,L_2\>\subseteq\<H_{1,2}\>$, a contradiction. Suppose now that the line  $\<x,z\>$ is not singular. Since $\<R_1,R_3\>$ is a singular plane, the line $\<y,z\>$ is singular. So we can apply the Quadrangle Lemma to the lines $L_2$, $\<y,z\>$ and two lines of $H$ connecting $z$ with $x$, obtaining that there is a symp through $x,z$ and $L_2$. Hence  $H$ contains $L_2$, a contradiction.


\textbf{Case 2.} Suppose that both $\langle L_{1},L_{2} \rangle$ and $\langle L_{1},L_{3} \rangle$ are singular planes.
Then $\Pi$ is a singular $3$-space by Lemma \ref{lemma1}. Let $N_1$ and $N_2$ be the singular lines of $H$ meeting $L_1$. If any of the planes $\langle L_{i},N_{j} \rangle$, $i\in\{1,2,3\}$, $j\in\{1,2\}$, is singular then by Lemma \ref{lemma1} we obtain the singular 4-space $\langle \Pi,N_{j} \rangle$, a contradiction to Lemma~\ref{Nofour}. Hence we jump back to case (1) using the symp defined by $L_{1},N_{1}$ instead of $H$, and the lines $L_{2},N_{2},L_{3}$ instead of $L_1,L_2,L_3$, respectively.

Hence (MM3) is satisfied and $(X_{p},\Sigma_{p})$ is a Mazzocca-Melone set of split type $2$. Since $\dim C_{p}\leq 7$, we know by \cite{JSHVM3} that $X_{p}$ is either $\mathcal{S}_{1,2}(\K)$ or $\mathcal{S}_{1,3}(\K)$. Suppose $(X_p,\Xi_p)$ is not always isomorphic to $\mathcal{S}_{1,2}$. Then there exists a point $r$ such that $X_{r}$ is $\mathcal{S}_{1,3}(\K)$. For every point $q$ collinear with $r$, the residue $X_{q}$ is also isomorphic to $\mathcal{S}_{1,3}(\K)$ as there is a 4-space containing $q$ and $r$. Now consider a point $s$ not collinear with $r$. Then in $X([r,s])$ there are points collinear with both $r$ and $s$. Repeating the above argument twice yields that $X_{s}$ is projectively equivalent to $\mathcal{S}_{1,3}(\K)$.
\end{proof}

We now define an incidence geometry $\mathcal{G}(X)$ with elements of Type $1$, $2$ and $3$, which we will prove to be a projective space of dimension $5$ or $4$, corresponding to the conclusion (1) or (2), respectively, of Proposition \ref{TSP}. We will treat both cases at once, so let $f$ be in $\{3,4\}$.
The elements of Type 1 are the singular $f$-spaces of $X$, the elements of Type 2 are the points of $X$, and the elements of Type 3 are the singular planes of $X$ not contained in  $f$-spaces. The incidence relation is inclusion made symmetric except for elements of Type 1 and $3$, which we declare incident if the corresponding singular $f$-space intersects the corresponding singular plane in a line.

We now show that the Type 1, 2 and 3 elements of $\mathcal{G}(X)$, with the given incidence, are the points, lines and planes of a projective space over $\K$ (but not of dimension 3). Notice that collinear points of $X$ correspond to ``concurrent'' elements of Type 2 in $\mathcal{G}(X)$.

\begin{lemma}
Every pair of distinct elements of $\cG(X)$ of Type $1$ is incident with a unique element of Type $2$.
\end{lemma}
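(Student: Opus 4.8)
The claim unravels, once the incidence between Type~1 and Type~2 elements (which is inclusion) is spelled out, to the assertion that two distinct singular $f$-spaces $M_1,M_2$ of $X$ share exactly one point of $X$. The plan is to establish uniqueness and existence of this common point separately, treating the cases $f=3$ and $f=4$ simultaneously, since for this argument the two residues $\mathcal{S}_{1,2}(\K)$ and $\mathcal{S}_{1,3}(\K)$ of Proposition~\ref{TSP} behave identically.

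For uniqueness, suppose $p\in M_1\cap M_2$ and pass to the residue $(X_p,\Xi_p)$, which is a Segre variety by Proposition~\ref{TSP}. There $M_1$ and $M_2$ induce maximal singular subspaces of the largest type (the $3$-spaces of $\mathcal{S}_{1,3}(\K)$, resp.\ the planes of $\mathcal{S}_{1,2}(\K)$). These images are distinct, because a singular $f$-space through $p$ is recovered from $p$ together with the singular lines through $p$ that it contains, i.e.\ from its image in the residue; and two distinct maximal singular subspaces of the largest type in a Segre variety $\mathcal{S}_{1,k}(\K)$ are disjoint. Hence no singular line through $p$ lies in both $M_1$ and $M_2$, so that $M_1\cap M_2=\{p\}$.

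For existence, choose points $x_1\in M_1$ and $x_2\in M_2$ and, using (MM1), a \EQ\ $H$ containing them; recall that $H$ is a hyperbolic quadric in a $\PAG^5(\K)$, whose generators are planes and fall into two systems. I would first show that $U_i:=M_i\cap H$ is a generator of $H$. Since $M_i\subseteq X$ we have $M_i\cap\<H\>\subseteq X\cap\<H\>=H$, so $U_i$ is a singular subspace of $H$ containing $x_i$; passing to the residue at $x_i$, the \EQ\ $H$ becomes a hyperbolic quadric $Q^+(3,\K)$ and, as every \EQ\ of a Segre variety is an $\mathcal{S}_{1,1}(\K)$ meeting a largest-type maximal singular subspace in a full line, the image of $M_i$ meets it in a full line, which lifts back to a generator plane $U_i$ of $H$. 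It then remains to see that $U_1$ and $U_2$ lie in the \emph{same} system of $H$, for two generators in one system of $Q^+(5,\K)$ always meet; the resulting point of $U_1\cap U_2$ lies in $M_1\cap M_2\subseteq X$.

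The main obstacle is precisely this last step. The mechanism I would use is that the two systems of $H$ correspond, through the residues, to the two types of maximal singular subspaces of $X$: at each point $w\in H$ the residue exhibits $H$ as a $Q^+(3,\K)$ whose two rulings are cut out respectively by the singular $f$-spaces through $w$ and by the maximal singular planes through $w$, and this labelling is constant along the singular lines of $H$, hence globally well defined because $H$ is connected. Since $M_1$ and $M_2$ are both singular $f$-spaces, $U_1$ and $U_2$ are cut out by members of the same ruling, so they lie in the same system and meet. (Were they in different systems they would meet in a line or be disjoint; a line would contradict the uniqueness just proved, while the disjoint case is exactly what the system-consistency argument rules out.) Combining existence with uniqueness gives the lemma.
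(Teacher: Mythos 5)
Your proof is correct, and its skeleton coincides with the paper's: uniqueness is obtained exactly as in the paper (two singular $f$-spaces sharing a line would induce, in the residue at a point of that line, two distinct largest-type maximal subspaces of a Segre variety $\cS_{1,f-1}(\K)$ having a common point, which is impossible since by Proposition~\ref{TSP} those subspaces are pairwise disjoint), and existence starts identically, taking a symp $H$ through a point of each $f$-space and showing via the residues that $H$ meets each $M_i$ in a generator plane $U_i$. The only divergence is how the case of disjoint $U_1,U_2$ is excluded. The paper constructs an explicit auxiliary singular plane $\pi$ meeting $U_1$ in a line and $U_2$ in a point, and reads off a contradiction from two residues: at a point of $\pi\cap U_1$ the plane $\pi$ is forced to be a maximal singular subspace, while at the point $\pi\cap U_2$ it is forced to lie inside a singular $f$-space. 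You instead globalize this observation into a two-colouring of the generators of $H$: one system consists of the planes lying in singular $f$-spaces, the other of the planes that are maximal singular subspaces of $X$, whence $U_1$ and $U_2$ lie in a common system and must meet. Your consistency argument for the colouring is sound (the class of a generator is independent of the point at which one computes the residue, it distinguishes the two rulings at every point, and it propagates along the singular lines of the connected quadric $H$); alternatively, one can simply note that two distinct generators of $Q^+(5,\K)$ in the same system always meet in a point, which makes the consistency immediate and shows that the paper's auxiliary plane is just the local incarnation of your global labelling. So the two arguments buy essentially the same thing: yours isolates a reusable structural fact (the two systems of a symp correspond to the two types of maximal singular subspaces of $X$), while the paper's stays entirely local and self-contained.
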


\begin{proof}
We have to show that two singular $f$-spaces intersect in a unique point of $X$. First note that two such $f$-spaces intersect in at most a point, since, if they would have a line in common, we can consider the residue in a point of that line and obtain a contradiction in view of Proposition~\ref{TSP} (two $(f-1)$-spaces always belong to the same family of non-intersecting subspaces of the corresponding Segre variety).

Now consider two singular $f$-spaces $\Pi_{1}$ and $\Pi_{2}$ and let $p_{1}\in \Pi_{1}$ and $p_{2}\in \Pi_{2}$ be two arbitrary points. Consider a symp $H$ containing $p_1,p_2$. Then $H$ intersects both $\Pi_{1}$ and $\Pi_{2}$ in planes $\pi_1,\pi_2$, respectively (this follows from Proposition~\ref{TSP} by considering the residues in $p_1$ and $p_2$ and noting that in the Segre variety $\mathcal{S}_{1,f-1}(\K)$, viewed as Mazzocca-Melone set, every symp intersects every singular $(f-1)$-space in a line). We claim that $\pi_1$ intersects $\pi_2$ in a point.

Indeed, if not, then they are disjoint and we can find a singular plane $\pi$ intersecting $\pi_1$ in a line $L$ and $\pi_2$ in a point $x$. Looking in the residue of a point on $L$, we conclude with the aid of  Proposition~\ref{TSP} that $\pi$ is a   maximal singular subspace. But looking in the residue of $x$, we see that $\pi$ corresponds to a line $L$ in a Segre variety $\mathcal{S}_{1,f-1}(\K)$ disjoint from the $(f-1)$-dimensional subspace corresponding to $\Pi_2$ (maximal singular subspaces of different dimension in a Segre variety always interest in a unique point). Hence $L$ itself is contained in an $(f-1)$-dimensional subspace of $\mathcal{S}_{1,3}(\K)$. Consequently $\pi$ is contained in a singular $f$-space, a contradiction. The claim and the lemma are proved.
\end{proof}

\begin{lemma}
Every pair of distinct elements of Type $2$ of $\mathcal{G}(X)$ which are incident with a common element of Type $1$, are incident with a unique element of type $3$.
\end{lemma}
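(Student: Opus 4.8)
The plan is to read the statement inside the projective-space picture being built: Type~2 elements are the ``lines'' and Type~3 elements the ``planes'' of $\cG(X)$, so the claim is the projective axiom that two lines meeting in a point span a unique plane. Concretely, two distinct Type~2 elements $p_1,p_2$ incident with a common Type~1 element $\Pi$ are two distinct points of $X$ both lying in the singular $f$-space $\Pi$; hence the line $L:=\<p_1,p_2\>$ is singular. Since incidence between Type~2 and Type~3 is inclusion, a Type~3 element incident with both $p_1$ and $p_2$ is exactly a singular plane containing $L$ and not contained in any $f$-space. Thus it suffices to prove that through the singular line $L$ there passes exactly one singular plane not lying in an $f$-space.

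First I would reduce to the residue. Pick any point $p$ on $L$ and pass to $(X_p,\Xi_p)$, which by Proposition~\ref{TSP} is the Segre variety $\cS_{1,f-1}(\K)$, with $f\in\{3,4\}$. Under the residue correspondence of Lemma~\ref{induction}, the singular line $L$ through $p$ becomes a point $\ell$ of $X_p$, every singular plane through $L$ (which automatically contains $p$, as $p\in L$) becomes a singular line of $X_p$ through $\ell$, and every singular $f$-space through $L$ becomes a singular $(f-1)$-space of $X_p$ through $\ell$. Moreover a singular plane through $L$ lies in an $f$-space if and only if the corresponding singular line of $X_p$ lies in a singular $(f-1)$-space; this equivalence is the step I would spell out carefully, using that $\<p,M\>$ is a singular $f$-space for each maximal singular $(f-1)$-space $M$ of $X_p$, and conversely that $f$-space containment in $X$ restricts to $(f-1)$-space containment in the residue.

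The key geometric input is then the structure of $\cS_{1,f-1}(\K)=\PAG^1(\K)\times\PAG^{f-1}(\K)$. Its maximal singular subspaces form two rulings: the $(f-1)$-spaces $\{a\}\times\PAG^{f-1}$ and the lines $\PAG^1\times\{b\}$, and every projective line contained in the Segre belongs to one of these rulings. Hence through a point $\ell=(a_0,b_0)$ the singular lines are exactly the lines $\{a_0\}\times R$, with $R$ a line of $\PAG^{f-1}$ through $b_0$ (all contained in the maximal $(f-1)$-space $\{a_0\}\times\PAG^{f-1}$), together with the single line $\PAG^1\times\{b_0\}$, which lies in no maximal $(f-1)$-space. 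Therefore exactly one singular line of $X_p$ through $\ell$ fails to lie in a singular $(f-1)$-space.

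Translating back, exactly one singular plane through $L$ fails to lie in an $f$-space; call it $\pi$. It contains $L$, hence both $p_1$ and $p_2$, and is a Type~3 element incident with each, giving existence; and any Type~3 element incident with both $p_1$ and $p_2$ must contain $L$, hence correspond to this same unique ruling line, giving uniqueness. I expect the main obstacle to be not a hard estimate but the bookkeeping of the residue dictionary—in particular the equivalence ``$\pi$ lies in an $f$-space $\iff$ its residue line lies in a singular $(f-1)$-space''—combined with invoking the standard description of the lines on a Segre variety.
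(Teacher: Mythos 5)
Your proof is correct, but it is not the argument the paper actually gives; it is a worked-out version of the alternative the paper only mentions in passing. After the same translation (two distinct points of a singular $f$-space span a singular line $L$, and the claim becomes: $L$ lies in exactly one singular plane that is not contained in a singular $f$-space), the paper's own proof obtains existence by considering a symp through $L$, and uniqueness from Lemma~\ref{lemma1}: two distinct singular planes meeting exactly in $L$ must lie in a common symp or in a singular $3$-space, and either alternative is incompatible with both planes being Type~3 elements; the paper then adds ``(one could also use Proposition~\ref{TSP} here)'', which is precisely your route. You instead derive existence and uniqueness simultaneously from Proposition~\ref{TSP}: pass to the residue at a point of $L$, where $L$ becomes a point $\ell$ of the Segre variety $\cS_{1,f-1}(\K)$, singular planes through $L$ become singular lines through $\ell$, singular $f$-spaces through $L$ become singular $(f-1)$-spaces through $\ell$, and the ruling structure of the Segre yields exactly one line through $\ell$ (the $\PAG^1$-ruling line) lying in no singular $(f-1)$-space. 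The trade-off is clear: the paper's argument is more economical, using only the elementary Lemma~\ref{lemma1} and the internal geometry of a single symp, and it avoids the residue dictionary (cones over singular subspaces of $X_p$ are singular in $X$, and conversely) which you correctly flag as the bookkeeping that must be spelled out; your argument leans on the heavier classification result Proposition~\ref{TSP} --- which is, however, already proved and used throughout this section --- and in return handles both halves of the statement in one stroke, with a transparent reason for uniqueness, namely that a Segre variety has exactly one horizontal ruling line through each of its points. Both are valid proofs of the lemma.
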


\begin{proof}
Translated back to $(X,\Xi)$, it suffices to prove that, if $p_1,p_2$ are two points of a singular $k$-space $U$, then the line $L:=\<p_1,p_2\>$  is contained in a unique singular plane $\pi$ which is a maximal singular subspace. Existence follows from considering a symp through $\<p_1,p_2\>$; uniqueness follows from Lemma~\ref{lemma1} (one could also use Proposition~\ref{TSP} here).
\end{proof}

\begin{lemma}
The elements of Type $1$ and $2$ incident with a given element of Type $3$ induce the structure of a projective plane over $\K$.
\end{lemma}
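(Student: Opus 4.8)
The plan is to show that the assignment $\Sigma\mapsto\Sigma\cap\pi$ realizes an isomorphism between the residue geometry at the fixed Type~$3$ element and the projective plane $\pi$ itself. Fix a Type~$3$ element, i.e.\ a singular plane $\pi$ not contained in any singular $f$-space. By the definition of the incidence relation, the Type~$2$ elements incident with $\pi$ are exactly the points of $\pi$, which already constitute the point set of a projective plane over $\K$, since $\pi\cong\PAG^2(\K)$. A Type~$1$ element $\Sigma$ (a singular $f$-space) is incident with $\pi$ precisely when $\Sigma\cap\pi$ is a line of $\pi$; sending each such $\Sigma$ to this line gives a map into the line set of $\pi$, and I would prove that this map is a bijection. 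Granting this, incidence between a Type~$1$ element $\Sigma$ incident with $\pi$ and a point $p\in\pi$ reads $p\in\Sigma$, which (as $p\in\pi$) is equivalent to $p\in\Sigma\cap\pi$, i.e.\ to $p$ lying on the corresponding line of $\pi$. Hence the induced geometry would be isomorphic to $\pi$ and the lemma would follow.

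For surjectivity I would fix a line $L$ of $\pi$ and a point $p\in L$, and pass to the residue $(X_p,\Xi_p)$, which by Proposition~\ref{TSP} is the Segre variety $\mathcal{S}_{1,f-1}(\K)$. Under the residue map the singular line $L$ becomes a point $q_L$, the plane $\pi$ becomes a singular line $\ell_\pi$ through $q_L$, and this line lies in the ``short'' family $\PAG^1\times\{\ast\}$ of maximal singular subspaces precisely because $\pi$ is contained in no singular $f$-space, so $\ell_\pi$ is contained in no maximal $(f-1)$-space of $X_p$. Through $q_L$ there is a unique maximal singular $(f-1)$-space $s$ of the ``long'' family $\{\ast\}\times\PAG^{f-1}$; it corresponds to a singular $f$-space $\Sigma=\langle p,s\rangle$ through $p$ that contains $L$. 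Since a short-family line and a long-family $(f-1)$-space of $\mathcal{S}_{1,f-1}(\K)$ meet in a single point, $\ell_\pi\not\subseteq s$, whence $\pi\not\subseteq\Sigma$; as $\Sigma\cap\pi$ is then a singular subspace of the plane $\pi$ containing the line $L$ but not all of $\pi$, it equals $L$ exactly, so $L$ is in the image.

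For injectivity I would suppose two singular $f$-spaces $\Sigma_1,\Sigma_2$ satisfy $\Sigma_1\cap\pi=\Sigma_2\cap\pi=L$. Choosing $p\in L$, both pass through $p$ and hence correspond in the residue to maximal $(f-1)$-spaces $s_1,s_2$ of the long family, each containing the point $q_L$. Since $\mathcal{S}_{1,f-1}(\K)$ has a unique long-family $(f-1)$-space through any of its points, $s_1=s_2$, and therefore $\Sigma_1=\langle p,s_1\rangle=\langle p,s_2\rangle=\Sigma_2$. Combined with the first paragraph, this shows the residue of $\pi$ is isomorphic to $\PAG^2(\K)$.

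The main obstacle is the bookkeeping of the residue translation: one must verify that singular $f$-spaces through $p$ correspond exactly to the long family of maximal singular subspaces of $\mathcal{S}_{1,f-1}(\K)$, while the Type~$3$ plane $\pi$ corresponds to a short-family line, and then invoke the elementary incidence geometry of $\mathcal{S}_{1,f-1}(\K)$ (a long $(f-1)$-space and a short line meet in exactly one point, and each point lies on a unique long space). These facts, together with the observation that a subspace of the plane $\pi$ strictly containing a line is all of $\pi$, are what force $\Sigma\cap\pi$ to be exactly a line and make the correspondence bijective.
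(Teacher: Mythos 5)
Your proof is correct and takes essentially the same route as the paper: the paper's own (one-line) proof invokes the definition of $\cG(X)$ together with the fact that every singular line lies in a \emph{unique} singular $f$-space, by Proposition~\ref{TSP}, and this is exactly the fact you establish in detail by passing to the residue $(X_p,\Xi_p)\cong\cS_{1,f-1}(\K)$ and using that each point of the Segre variety lies on a unique long-family $(f-1)$-space while short-family lines are maximal. Your additional bookkeeping (that the unique $f$-space $\Sigma\supseteq L$ meets $\pi$ exactly in $L$, and that incidence then transfers to the natural point-line incidence of $\pi$, up to duality) is precisely what the paper leaves implicit in ``follows immediately from the definition of $\cG(X)$''.
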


\begin{proof}
This follows immediately from the definition of $\mathcal{G}(X)$, and the fact that every singular line is contained in a unique singular $f$-space, by Proposition~\ref{TSP}.
\end{proof}

\bigskip

It follows from Theorem 2.3 of \cite{BC} that $\mathcal{G}(X)$ is a projective space. We determine the dimension.

\begin{lemma}
The dimension of $\mathcal{G}(X)$, as a projective space, equals $f+1$.
\end{lemma}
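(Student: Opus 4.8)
The plan is to pin down $\dim\cG(X)$ by computing the \emph{residue} (star) of a single point of the projective space $\cG(X)$ and identifying it with a singular $f$-space. Since we already know $\cG(X)$ is a projective space, say of dimension $n$, and since the residue of a point in an $n$-dimensional projective space is a projective space of dimension $n-1$, it suffices to show that this residue is (as a point-line geometry) a copy of $\PAG^{f}(\K)$, which will force $n-1=f$.

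Concretely, I would fix a Type~1 element, i.e.\ a singular $f$-space $\Pi$ (all of whose points lie in $X$, as $\Pi$ is singular), and describe the residue of $\Pi$ in $\cG(X)$ explicitly. The points of this residue are the lines of $\cG(X)$ through $\Pi$, that is, the Type~2 elements incident with $\Pi$; by the inclusion incidence between Type~1 and Type~2 these are exactly the points of $X$ lying in $\Pi$, which form precisely the point set of $\Pi\cong\PAG^{f}(\K)$. The lines of the residue are the planes of $\cG(X)$ through $\Pi$, i.e.\ the Type~3 elements $\pi$ incident with $\Pi$; by definition of the incidence between Type~1 and Type~3, these are the singular planes $\pi$ meeting $\Pi$ in a line $L:=\pi\cap\Pi$.

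The crux is to show that $\pi\mapsto\pi\cap\Pi$ is an incidence-preserving bijection from $\{$Type~3 elements incident with $\Pi\}$ onto $\{$lines of $\Pi\}$. For surjectivity, given a line $L$ of $\Pi$, pick two points $p_1,p_2\in L$; the lemma on pairs of Type~2 elements incident with a common Type~1 element gives a \emph{unique} maximal singular plane $\pi$ through $L$, and since $f>2$ this $\pi$ cannot be contained in $\Pi$ (otherwise it would not be maximal), whence $\pi\cap\Pi=L$ and $\pi$ is a Type~3 element incident with $\Pi$. For injectivity, two Type~3 elements meeting $\Pi$ in the same line $L$ are both maximal singular planes containing $L$, hence coincide by the uniqueness in that same lemma. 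Finally, incidence is respected: a residue-point $x$ (so $x\in\Pi$) is incident with a residue-line $\pi$ iff $x\in\pi$ (Type~2–Type~3 incidence) and $x\in\Pi$, i.e.\ iff $x\in\pi\cap\Pi=L$, which is exactly the incidence of the point $x$ with the line $L$ inside $\Pi$.

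Consequently the residue of $\Pi$ is isomorphic, as a point-line geometry, to $\Pi\cong\PAG^{f}(\K)$, a projective space of dimension $f$. Comparing with the fact that this residue is also a projective space of dimension $n-1$, we obtain $n-1=f$, i.e.\ $\dim\cG(X)=f+1$. The only delicate point is the bijection in the third paragraph: one must be careful that a Type~3 plane meets $\Pi$ in exactly a line (which uses maximality of the plane together with $f>2$), and that the uniqueness clause of the earlier incidence lemma is invoked correctly for both injectivity and surjectivity; everything else is a routine translation between $\cG(X)$ and $(X,\Xi)$.
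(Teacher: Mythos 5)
Your proof is correct and takes essentially the same approach as the paper: both fix a Type~1 element $\Pi$ and identify its residue in $\cG(X)$ with the point-line geometry of $\Pi\cong\PAG^{f}(\K)$, using the earlier uniqueness lemma to put the Type~3 elements incident with $\Pi$ in incidence-preserving bijection with the lines of $\Pi$, whence $\dim\cG(X)=f+1$. You merely spell out the injectivity, surjectivity and incidence-preservation of $\pi\mapsto\pi\cap\Pi$ more explicitly than the paper does.
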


\begin{proof}
Fix an element of Type $1$, which is a singular $f$-space $\Pi$. The elements of Type~2 incident with it are the points $x\in\Pi$, and the elements of Type~3 incident with $\Pi$ are the singular planes not contained in $\Pi$ intersecting $\Pi$ in a line. Since there is only one such singular plane for a given line of $\Pi$, we may identify each such plane with that line. Incidence between the Type~2 and Type~3 elements is the natural incidence in $\Pi$, and so we see that Type~2 and~3 elements incident with $\Pi$ form a projective space of dimension $f$. Hence the dimension of $\cG(X)$ as a projective space is equal to $1+f$.   
\end{proof}

We can now prove the case $d=4$ of the Main Result.

\begin{prop}\label{casen=2}
A Mazzocca-Melone set of split type $4$ in $\PAG^{N}(\K)$  is projectively equivalent to one of the following:
\begin{itemize*}
\item the line Grassmannian variety $\mathcal{G}_{3,1}(\K)$, and then $N=5$;
\item the line Grassmannian variety $\mathcal{G}_{4,1}{(\K)}$, and then $N=9$;
\item the line Grassmannian variety $\mathcal{G}_{5,1}{(\K)}$, and then $N=14$;
\end{itemize*}
The first case is not proper and corresponds to a hyperbolic quadric of Witt index $3$.
\end{prop}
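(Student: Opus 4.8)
The plan is to identify the point-line geometry $\Gamma=(X,\text{singular lines})$ with a line Grassmannian geometry, and then invoke the universal embedding statement of Proposition~\ref{universal} to pin down the projective realization and the dimension $N$. By the preceding lemmas we have already built the incidence geometry $\cG(X)$ (whose Type~1, 2, 3 elements are singular $f$-spaces, points of $X$, and maximal singular planes) and shown that it is a projective space $\PAG^{f+1}(\K)$, where $f\in\{3,4\}$ according to whether the residues are $\cS_{1,3}$ or $\cS_{1,2}$ (Proposition~\ref{TSP}). The key observation to exploit is that in $\cG(X)\cong\PAG^{f+1}(\K)$, the original points of $X$ (the Type~2 elements) correspond exactly to the lines of $\PAG^{f+1}(\K)$: indeed each point $x\in X$ is incident with the singular $f$-spaces through it (Type~1) and the maximal singular planes through it (Type~3), and these form a pencil-type structure dual to a line. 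So I would first verify carefully that Type~2 elements are precisely the lines of the projective space $\cG(X)$, matching the abstract description of the line Grassmannian $\cG_{f+1,1}(\K)$ of $\PAG^{f+1}(\K)$.

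Concretely, I would argue as follows. The incidence geometry $\cG(X)$ has rank $3$ and is a projective space of dimension $f+1$ by the previous lemma; its points are the Type~1 elements (singular $f$-spaces) and its planes are the Type~3 elements (maximal singular planes), with two Type~1 elements meeting in a unique Type~2 element. Thus a Type~2 element, being determined by a pair of Type~1 elements and lying in a pencil of Type~3 elements, is naturally identified with a line of $\PAG^{f+1}(\K)$. This identification sets up an isomorphism between $\Gamma$ and the line Grassmannian geometry $\cG_{f+1,1}(\K)$: $X$-collinearity of two points corresponds to two lines of $\PAG^{f+1}(\K)$ meeting in a point (equivalently, lying in a common plane), since collinear points of $X$ correspond to concurrent Type~2 elements, as already noted. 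I would check that singular lines of $X$ map to pencils of lines (the lines through a fixed point inside a fixed plane of $\PAG^{f+1}(\K)$), which are exactly the lines of the Grassmannian geometry, and that symps $X(\xi)$ correspond to the sub-Grassmannians $\cG_{3,1}$ coming from $3$-subspaces, i.e.\ hyperbolic quadrics in $\PAG^5(\K)$, consistent with split type~$4$.

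Having established the abstract isomorphism $\Gamma\cong\cG_{f+1,1}(\K)$, I would then apply Proposition~\ref{universal}: the embedding of $X$ in $\PAG^{N}(\K)$ arising from the Mazzocca-Melone axioms (where spans of symps meet in singular subspaces) is the absolute universal embedding of the line Grassmannian geometry, which by the cited result of Havlicek is the Pl\"ucker embedding into $\PAG^{\frac{m^2+m-2}{2}}(\K)$ with $m=f+1$. For $f=3$ this gives $m=4$ and $N=9$; for $f=4$ it gives $m=5$ and $N=14$. The case $N=5$ with $\cG_{3,1}(\K)$ arises as the non-proper degenerate boundary case, i.e.\ when $|\Xi|=1$ and $X$ is a single hyperbolic quadric in $\PAG^5(\K)$ (the line Grassmannian of $\PAG^3(\K)$), and I would note this explicitly to cover the first bullet of the statement.

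The main obstacle I anticipate is the clean identification of Type~2 elements of $\cG(X)$ with lines of the projective space, and in particular verifying that the incidence relation (inclusion, made symmetric, with the special convention for Type~1 versus Type~3) reproduces \emph{exactly} the Grassmannian incidences with no spurious identifications or missing flags; this requires a careful residue-by-residue bookkeeping using Proposition~\ref{TSP}, since the two cases $f=3,4$ must be handled in parallel and the diagrammatic structure ($\mathsf{A}_{f+1}$ with the middle or an end node as the point-type) differs between them. A secondary technical point is confirming that the embedding produced by the axioms is genuinely the universal one rather than a proper quotient or projection; here I would lean on Remark~\ref{secant} (the secant-variety argument showing there are no proper projections) together with Havlicek's theorem, exactly as packaged in Proposition~\ref{universal}.
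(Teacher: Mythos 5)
Your proposal is correct and follows essentially the same route as the paper: it uses the preceding lemmas to see that $\cG(X)$ is a projective space of dimension $f+1\in\{4,5\}$, identifies the points of $X$ with the lines of that projective space (and the singular lines of $X$ with pencils), and then invokes Proposition~\ref{universal} to conclude that the embedding is the universal (Pl\"ucker) one, giving $N=9$ or $N=14$, with the non-proper case accounting for $\cG_{3,1}(\K)$ and $N=5$. The only differences are presentational (you spell out the flag/pencil bookkeeping and the role of Havlicek's theorem, which the paper packages inside Proposition~\ref{universal}).
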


\begin{proof}
By the previous lemmas, if $(X,\Xi)$ is proper, then $\cG(X)$ is a projective space of dimension $4$ or $5$ over $\K$. Now, the lines of $\cG(X)$ are the points of $X$, and the planar line pencils are the points in the non-trivial intersection of a singular $4$-space and a singular $2$-space not contained in a singular $4$-space (this follows from Proposition~\ref{TSP} by picking a point in the intersection).  

Consequently $X$, endowed with the singular lines, is an embedding of the line Grassmannian geometry $\cG_{4,1}(\K)$ or $\cG_{5,1}(\K)$. Proposition~\ref{universal} implies the uniqueness of this embedding, and the assertions follow. 
\end{proof}

\bigskip

Now we assume $d\in\{6,8\}$. 

In order to identify the half-spin geometry $\cD_{5,5}(\K)$ and the geometry $\cE_{6,1}(\K)$, we will use some basic theory of  \emph{diagrams}, as developed by Tits, culminating in the beautiful characterizations of building geometries by their diagrams in \cite{Tits83}. This works as follows.

A building is in fact a numbered simplicial chamber complex satisfying some axioms. One of the axioms says that every pair of chambers (i.e., every pair of simplices containing a vertex of every type) is contained in a thin subcomplex (thin means that every panel|a panel is a simplex obtained from a chamber by removing one vertex|is contained in exactly two chambers). The other axioms then ensure that all these thin subcomplexes (called apartments) are isomorphic and when they are finite, then we talk about a spherical building. The type preserving automorphism group of each apartment is a Coxeter group. The type of the Coxeter group is also the type of the building. Attached to each Coxeter group is a Coxeter diagram. In most spherical types, this Coxeter diagram is just a Dynkin diagram with the arrow removed. In our current case, types $\mathsf{D_5}$ and $\mathsf{E_6}$, we have the diagrams

\begin{center}
\begin{tikzpicture}[style=thick]
\foreach \x in {0,1,2}{
\fill (\x,0) circle (2pt);}

\fill (2.75, .7) circle (2pt);
\fill (2.75, -.7) circle (2pt);
\draw (2.75, .7) -- (2,0) -- (2.75, -.7); 
\draw (0,0) -- (2,0); 
\end{tikzpicture}
\end{center}

and 

\begin{center}
\begin{tikzpicture}[style=thick]
\foreach \x in {0,1,2,3,4}{
\fill (\x,0) circle (2pt);}
\fill (2,1) circle (2pt);
\draw (0,0) -- (4,0); 
\draw (2,1) -- (2,0);
\end{tikzpicture}
\end{center}

respectively. This diagram now acts as an identity card for the geometry of the building. Each node stands for a type of vertices, and each (simple) edge stands for a family of residues isomorphic to the flag complex of a projective plane. Namely, let the nodes of an edge be labeled $i$ and $j$ (so these are types of vertices of the building); then, for any simplex $F$ obtained from a chamber by removing the vertices of types $i$ and $j$, the graph formed by the vertices of types $i$ and $j$ completing $F$ to a panel, two vertices being adjacent if they form a simplex (this graph is a residue of type $\{i,j\}$), is the incidence graph of a projective plane. If the nodes labeled $i$ and $j$ are not on an edge in the diagram, then the similarly defined graph is complete bipartite. Likewise, one defines the residue of any simplex. The rank of a residue is the difference between the size of a chamber and the size of the simplex in question. 

Now, given a numbered chamber complex $\cC$, we can define residues as in the previous paragraph. Given a graph $\Gamma$, we say that $\cC$ conforms to $\Gamma$ if the nodes of $\Gamma$ can be identified with the types of vertices in such a way that, for each pair of types $i.j$, a residue of type $\{i,j\}$ is the incidence graph of a projective plane if $\{i,j\}$ is an edge of $\Gamma$, and a complete bipartite graph otherwise.   

Now it follows from the main result in \cite{Tits83} and from work of Brouwer \& Cohen \cite{BrCo} that a numbered chamber complex is isomorphic to a building of type $\mathsf{D}_5$ or $\mathsf{E_6}$ if and only if it conforms to the Coxeter diagram of type $\mathsf{D}_5$ or $\mathsf{E}_6$, respectively, and if all residues of rank at least 2 are connected, i.e., in each residue of rank at least 2, the graph on the vertices where two vertices are adjacent if they form a simplex, is connected (we say that the complex is residually connected). This means that, in order to prove that the Mazzocca-Melone sets in question are really the half-spin geometry $\cD_{5,5}$ and the $\cE_{6,1}$ variety, respectively, it suffices to define a residually connected numbered simplicial chamber complex where the points of he Mazzocca-Melone set are the vertices of type $5$ and $1$, respectively, and the singular lines are the vertices of type $3$ (we use standard Bourbaki labelling, see \cite{Bourbaki}), and to show that it conforms to the diagram $\mathsf{D_5}$ and $'mathsf{E_6}$, respectively. That is exactly how we are going to proceed.
 
\begin{prop}\label{casen=3}
A Mazzocca-Melone set $(X,\Xi)$ of split type $6$ in $\PAG^{N}(\K)$  is projectively equivalent to one of the following:
\begin{itemize*}
\item the half-spin variety $\mathcal{D}_{4,4}(\K)$, and then $N=7$;
\item the half-spin variety $\mathcal{D}_{5,5}(\K)$, and then $N=15$.
\end{itemize*}
The first case is not proper and corresponds to a hyperbolic quadric of Witt index $4$.
\end{prop}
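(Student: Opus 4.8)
The plan is to treat the non-proper case directly and the proper case by reconstructing a rank-$5$ geometry and invoking the diagram characterization of buildings. If $(X,\Xi)$ is not proper it is a single symp, i.e.\ a hyperbolic quadric in $\PAG^{7}(\K)$, which by triality is the half-spin variety $\mathcal{D}_{4,4}(\K)$; here $N=7$. So assume $(X,\Xi)$ proper. For every $p\in X$, Corollary~\ref{MMinductive} gives that the residue $(X_p,\Xi_p)$ is a Mazzocca-Melone set of split type $4$, and Lemma~\ref{proper} that it is proper; hence by Proposition~\ref{casen=2} it is a line Grassmannian $\mathcal{G}_{4,1}(\K)$ or $\mathcal{G}_{5,1}(\K)$. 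Since $\langle X_p\rangle\subseteq C_p$ has projective dimension at most $\dim T_p-1\leq 2\cdot 6-1=11<14$, the variety $\mathcal{G}_{5,1}(\K)$ (which spans a $14$-dimensional space) cannot occur, so the residue at every point of $X$ is the line Grassmannian $\mathcal{G}_{4,1}(\K)$ of $\PAG^{4}(\K)$.

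Next I would build a rank-$5$ incidence geometry $\mathcal{G}(X)$. From the local model $\mathcal{G}_{4,1}(\K)$ one reads off that through each point the maximal singular subspaces fall into exactly two families, the images of the stars and of the rulings of the line Grassmannian; pulling them back through the point shows that the maximal singular subspaces of $X$ have projective dimension $4$ and $3$, respectively. I take the five types of $\mathcal{G}(X)$ to be the symps (type $1$), the maximal singular $3$-spaces (type $2$), the singular lines of $X$ (type $3$), the maximal singular $4$-spaces (type $4$) and the points of $X$ (type $5$); incidence is symmetrized containment, adjusted — as in the split type $4$ analysis — so that a singular line is incident with a symp or with a maximal singular subspace exactly when it lies inside it. The intended identification with $\mathsf{D}_5$ places the branch at type $3$, giving edges $\{1,2\},\{2,3\},\{3,4\},\{3,5\}$; in particular the residue of a point (type $5$) is, via the bijections points/lines/planes/solids of the $4$-space $\leftrightarrow$ incident symps/$3$-spaces/singular lines/$4$-spaces, exactly the $\mathsf{A}_4$-building underlying $\mathcal{G}_{4,1}(\K)$.

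The core of the proof is to check that $\mathcal{G}(X)$ conforms to the Coxeter diagram $\mathsf{D}_5$: every rank-$2$ residue must be a projective plane on an edge and a generalized digon on a non-edge. Every rank-$2$ residue of cotype $\{i,j\}$ with $\{i,j\}\subseteq\{1,2,3,4\}$ is computed inside the star of the (necessarily present) type-$5$ element, where $\mathcal{G}(X)$ restricts to the $\mathsf{A}_4$-building $\PAG^{4}$; thus these residues are the projective planes and digons prescribed by the sub-diagram $1\!-\!2\!-\!3\!-\!4$. The residues of cotype $\{i,5\}$ I would verify by hand: for the non-edges $\{1,5\}$, $\{2,5\}$, $\{4,5\}$ the digon condition reduces to elementary facts about the underlying quadric (e.g.\ that a type-$+$ and a type-$-$ maximal subspace sharing a singular plane meet in codimension one, whence every point of a singular line lies in every maximal singular $4$-space containing that line), while the edge $\{3,5\}$ yields a projective plane obtained by working inside a fixed maximal singular subspace together with the symp structure. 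Once all rank-$2$ residues are verified, the diagram theorem of Tits~\cite{Tits83} together with the work of Brouwer and Cohen~\cite{BrCo} shows that $\mathcal{G}(X)$ is a building of type $\mathsf{D}_5$ over $\K$, so that $X$ with its singular lines is the half-spin geometry $\mathcal{D}_{5,5}(\K)$; Proposition~\ref{universal} then fixes the embedding uniquely and forces $N=15$.

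I expect the main obstacle to lie not in applying the diagram theorem but in the intrinsic reconstruction feeding it. One must canonically separate the two families of maximal singular subspaces — which have different dimensions $3$ and $4$ — purely from $(X,\Xi)$ and the local model, and then verify the complete and uniform list of rank-$2$ residues across all flags, most delicately the generalized-digon conditions mixing the two subspace families with the symps and the projective-plane condition at the branch node $3$. Keeping this bookkeeping consistent globally, rather than merely locally, is where the real work is concentrated.
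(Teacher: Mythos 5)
Your proposal is correct and follows essentially the same route as the paper: dispose of the non-proper case as $\mathcal{D}_{4,4}(\K)$; identify the point residues via Corollary~\ref{MMinductive}, Lemma~\ref{proper} and Proposition~\ref{casen=2}; build the rank-$5$ incidence geometry on symps, maximal singular $3$-spaces, singular lines, singular $4$-spaces and points; verify the Coxeter diagram $\mathsf{D}_5$ residue by residue; and conclude with Tits~\cite{Tits83}, Brouwer--Cohen~\cite{BrCo} and Proposition~\ref{universal}. One point in your favour: you are right that the residue must be $\mathcal{G}_{4,1}(\K)$, not $\mathcal{G}_{5,1}(\K)$. The paper's text at this step names $\mathcal{G}_{5,1}(\K)$, which is a slip (that variety spans a $14$-dimensional space, excluded by the very dimension bound being invoked); the paper's subsequent construction, like yours, is only consistent with $\mathcal{G}_{4,1}(\K)$.

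The one statement in your write-up that is wrong as it stands is the incidence relation, and it matters. Your ``adjustment'' (a singular line is incident with a symp or a maximal singular subspace exactly when it lies inside it) is just containment, so it adjusts nothing; and with pure symmetrized containment the geometry $\cG(X)$ has no chambers at all, since a singular $4$-space is never contained in a symp and never contains a maximal singular $3$-space of type $2$, so the cotype $\{1,4\}$ and $\{2,4\}$ residues cannot be generalized digons and the diagram theorem does not even apply. The adjustments must concern precisely the two pairs mixing type $4$ with types $1$ and $2$: a symp and a singular $4$-space are incident if and only if they meet in a $3$-space, and a maximal singular $3$-space and a singular $4$-space are incident if and only if they meet in a plane --- exactly as in the paper. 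With that correction your verification plan goes through. Note also that the paper economizes on your case-by-case treatment of the cotype $\{i,5\}$ residues: it observes that the residue of a symp is the oriflamme $\mathsf{D}_4$ geometry of a hyperbolic quadric in $\PAG^{7}(\K)$, which settles all pairs inside $\{2,3,4,5\}$ at once and leaves only the (trivially digon) pairs of type $\{1,5\}$ to check by hand, whereas you check $\{2,5\}$, $\{4,5\}$ and $\{3,5\}$ directly; that also works, since a point of a singular line lies in everything containing that line, and the $\{3,5\}$ residue reduces to the points and lines of the plane in which the relevant $3$-space and $4$-space meet.
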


\begin{proof}
We may assume that $(X,\Xi)$ is proper, as otherwise we have the half-spin variety $\mathcal{D}_{4,4}(\K)$, which, by triality, is isomorphic to $\mathcal{D}_{4,1}$. By Lemma~\ref{MMinductive} we know that every residue is a Mazzocca-Melone set of split type $4$, which is proper by Lemma~\ref{proper}. Since the ambient space of the residue has dimension at most 12 by (MM3), Proposition~\ref{casen=2} implies that the residue corresponds to a line Grassmannian variety $\mathcal{G}_{4,1}(\K)$.

Define the following numbered simplical complex $\cG(X)$ with vertices of types 1 up to 5. The vertices of type 1 are the symps of $(X,\Xi)$, the ones of type 2 are the singular 3-spaces not contained in a singular 4-space, the type 3 vertices are the singular lines, the type 4 ones are the singular 4-spaces and, finally, the vertices of type 5 are the points belonging to $X$. We define an adjacency relation between two vertices of distinct type as containment made symmetric, except that a vertex of type 1 is incident with a vertex of type 4 if the corresponding 4-space intersects the corresponding hyperbolic quadric in a 3-space, and also that a vertex of type 2 is incident with one of type 4 if the corresponding 3-space intersects the corresponding 4-space in a plane. The simplices of $\cG(X)$ are the cliques of the corresponding graph. 

It is straightforward to verify that this simplical complex is a chamber complex. We show that it conforms to the diagram of type $\mathsf{D}_{5}$ (where we have chosen the types above so that they conform to the Bourbaki labeling \cite{Bourbaki}), which, by Tits \cite{Tits83}, implies that $\cG(X)$ is the building of type $\mathsf{D}_5$ over $\K$. It follows easily from the definition of incidence in $\cG(X)$ that any residue of type $\{1,2,3,4\}$ of $\cG(X)$ (say, of an element $p$ of type $5$) corresponds to the residue $(X_p,\Xi_p)$ of $(X,\Xi)$ as defined in the present paper. Hence all rank 2 residues of type $\{i,j\}$, with $\{i,j\}\subseteq\{1,2,3,4\}$, are correct. Also, in the same way, the residues of type $\{2,3,4,5\}$ of $\cG(X)$ correspond to the geometry of the symps, which establishes the correctness of all rank 2 residues of type $\{i,j\}$, for all $\{i,j\}\subseteq\{2,3,4,5\}$. It remains to check the residues of type $\{1,5\}$. But these all are trivially complete bipartite graphs.

With the information of the previous paragraph, it is not difficult to see that $\cG(X)$ is residually connected.
Since type 5 vertices of $\cG(X)$ correspond to elements of $X$, and type $3$ to the singular lines, $X$ is an embedding of the half-spin geometry $\mathsf{D}_{5,5}(\K)$. Proposition~\ref{universal} completes the proof of the proposition.  
\end{proof}

\begin{prop}\label{casen=4}
A Mazzocca-Melone set $(X,\Xi)$ of split type $8$ in $\PAG^{N}(\K)$  is projectively equivalent to one of the following:
\begin{itemize*}
\item the variety $\mathcal{D}_{5,1}(\K)$, and then $N=9$;
\item the variety $\mathcal{E}_{6,1}(\K)$, and then $N=26$.
\end{itemize*}
The first case is not proper and corresponds to a hyperbolic quadric of Witt index $5$.
\end{prop}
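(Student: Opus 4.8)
The plan is to mimic the proof of Proposition~\ref{casen=3} one step higher, replacing the role of $\mathsf{D}_5$ by $\mathsf{E}_6$. If $(X,\Xi)$ is non-proper it is a single split quadric of type $8$, that is, a hyperbolic quadric in $\PAG^9(\K)$, which is the variety $\mathcal{D}_{5,1}(\K)$; this yields the first conclusion with $N=9$. So assume $(X,\Xi)$ is proper. Since $6\le d=8\le 9$, Corollary~\ref{MMinductive} applies and each residue $(X_p,\Xi_p)$ is a Mazzocca-Melone set of split type $6$, proper by Lemma~\ref{proper}. As $C_p$ has dimension at most $2d-1=15$, Proposition~\ref{casen=3} identifies each residue with the half-spin variety $\mathcal{D}_{5,5}(\K)$ (the non-proper alternative $\mathcal{D}_{4,4}(\K)$ being ruled out by properness); consequently $\dim C_p=15$ and $\dim T_p=16=2d$ for every $p\in X$. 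Reading off the singular subspace structure of $\mathcal{D}_{5,5}$ through each point, the maximal singular subspaces of $X$ are projective $4$- and $5$-spaces, and there are no singular $6$-spaces.

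Next I would build an incidence geometry $\cG(X)$ with six types, labelled to conform to the Bourbaki diagram of $\mathsf{E}_6$ in which node $4$ is the trivalent node adjacent to $2,3,5$ and node $1$ is adjacent only to node $3$. The type-$1$ objects are the points of $X$, the type-$3$ objects the singular lines, the type-$4$ objects the singular planes, the type-$5$ objects the maximal singular $4$-spaces (those lying in no singular $5$-space), the type-$2$ objects the singular $5$-spaces, and the type-$6$ objects the symps. Incidence is containment made symmetric, with the same kind of adjustment for symps as in Proposition~\ref{casen=3}: a symp is incident with a singular $5$-space when the latter meets the quadric in a $4$-space, and with a maximal singular $4$-space when the $4$-space lies inside the quadric. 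These two rules are arranged precisely so that the two families of maximal singular $4$-spaces of the polar space $\mathcal{D}_{5,1}$ carried by a symp reappear as its incident type-$2$ and type-$5$ objects; the remaining incidences between the two families of maximal singular subspaces of $X$ are defined through their common singular $3$-space, as the oriflamme structure dictates.

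The heart of the proof is to check that $\cG(X)$ conforms to $\mathsf{E}_6$, i.e.\ that every rank-$2$ residue is a projective plane along a diagram edge and a generalized digon otherwise. As in the $\mathsf{D}_5$ case this reduces to two residues plus one direct verification. A flag of cotype $\{i,j\}\subseteq\{2,3,4,5,6\}$ contains a point $p$, and the objects of $\cG(X)$ incident with $p$ form exactly the geometry of $(X_p,\Xi_p)\cong\mathcal{D}_{5,5}$, whose diagram is $\mathsf{D}_5=\mathsf{E}_6\setminus\{1\}$; this disposes of all ten such residues. Dually, a flag of cotype $\{i,j\}\subseteq\{1,2,3,4,5\}$ contains a symp $H$, and the objects incident with $H$ form the oriflamme $\mathsf{D}_5$-geometry of the polar space $\mathcal{D}_{5,1}$ inside $H$ (its two maximal families recovered as the incident type-$2$ and type-$5$ objects), which is $\mathsf{D}_5=\mathsf{E}_6\setminus\{6\}$; this disposes of all of the remaining residues except one. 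The only cotype left is $\{1,6\}$: here the flag contains a singular line $L$, every incident point lies on $L$ and every incident symp contains $L$, so the residue is a generalized digon, exactly matching the non-adjacency of nodes $1$ and $6$.

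With the diagram established, the characterizations of Tits~\cite{Tits83} together with Brouwer \& Cohen~\cite{BrCo} show that $\cG(X)$ is the building of type $\mathsf{E}_6$ over $\K$, so that $X$ together with its singular lines is an embedding of the geometry $\mathcal{E}_{6,1}(\K)$; Proposition~\ref{universal} then yields uniqueness of the embedding and forces $N=26$. I expect the main obstacle to be the bookkeeping hidden in the symp-residue: one must show that the singular subspaces meeting a fixed symp organize themselves exactly into the oriflamme geometry of $\mathcal{D}_{5,1}$, in particular that its two families of maximal $4$-spaces split cleanly into those extending to a singular $5$-space and those maximal in $X$, and that the special incidence rules create no spurious flags. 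Establishing the non-existence of singular $6$-spaces and this clean dichotomy (the analogue for split type $8$ of Lemma~\ref{Nofour}) is where the dimension estimates coming from the $\mathcal{D}_{5,5}$ residues have to be pushed hardest.
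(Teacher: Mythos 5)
Your proposal is correct and takes essentially the same route as the paper: identify each residue as $\mathcal{D}_{5,5}(\K)$ via Corollary~\ref{MMinductive}, Lemma~\ref{proper} and Proposition~\ref{casen=3}, build the same rank-six geometry $\cG(X)$ with identical type assignments and the same two incidence adjustments, verify the $\mathsf{E}_6$ diagram through the point-residues (half-spin $\mathsf{D}_5$), the symp-residues (oriflamme $\mathsf{D}_5$ polar geometry) and the $\{1,6\}$ generalized digon, and conclude with Tits and Brouwer--Cohen plus Proposition~\ref{universal}. The obstacle you flag at the end is handled in the paper exactly as your residue argument suggests: the structure inside a symp (including the dichotomy of the two families of maximal singular $4$-spaces) is read off from the fact that the type-$6$ residues are the polar geometries of the quadrics $X(\xi)$, so no separate analogue of Lemma~\ref{Nofour} is needed.
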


\begin{proof}
We may assume that $(X,\Xi)$ is proper. By Lemma~\ref{MMinductive} we know that every residue is a Mazzocca-Melone set of split type $6$, which is proper by Lemma~\ref{proper}.
Hence it follows from Proposition~\ref{casen=3} that every residue is a half-spin variety $\mathcal{D}_{5,5}(\K)$.

Define the following numbered simplical complex $\mathcal{G}(X)$ with vertices of types 1 up to 6.
The \emph{vertices of type} $1,\ldots, 6$ are the {points of $X$, singular 5-spaces, singular lines, singular planes, singular 4-spaces which are not contained in a singular $5$-space}, and {\EQ}s, respectively.
 First defining a graph on these vertices, adjacency is containment made symmetric, except in the following two cases. A singular 5-space and a singular 4-space which is not contained in a singular $5$-space are adjacent if they intersect in a 3-space, and a singular {5-space} and a \EQ\ are adjacent if their intersection is a 4-dimensional singular space. The simplices of $\cG(X)$ are again the 
cliques of the graph just defined. It is routine to check that this is a chamber complex. 

We claim that $\mathcal{G}(X)$ is a building of type $\mathsf{E}_6$ over the field $\K$, with standard Bourbaki \cite{Bourbaki} labeling of the types.

We now show that $\cG(X)$ conforms to the diagram of type $\mathsf{E}_6$. Since the residue in $\cG(X)$ of an element of type $6$ is a \EQ, we already know that the residues of types $\{i,j\}\subseteq\{1,2,3,4,5\}$ are the correct ones. Moreover, it follows directly from the definition of $\mathcal{G}(X)$ that the residue of an element $p$ of type 1 in $\mathcal{G}(X)$ corresponds to the residue $(X_p,\Xi_p)$ in $(X,\Xi)$. Since the latter is the half-spin variety $\cD_{5,5}(\K)$, the residues of type $\{i,j\}\subseteq\{2,3,4,5,6\}$ are also the correct ones. It remains to check that the residues of type $\{1,6\}$ are complete bipartite graphs, which is straightforward.

With the information of the previous paragraph, it is not difficult to see that $\cG(X)$ is residually connected.
Since type 1 elements of $\cG(X)$ correspond to elements of $X$, and type $3$ to the singular lines, $X$ is an embedding of the geometry $\mathsf{E}_{6,1}(\K)$. Proposition~\ref{universal} completes the proof of the proposition. 
\end{proof}

\section{The cases of non-existence}\label{notexist}
In this section, we show the nonexistence of proper Mazzocca-Melone sets of split type $d$ for $d\in\{5,7,9\}$ and $d\geq 10$. There are essentially two arguments for that: one ruling out $d=5$ (and then use Corollary~\ref{MMinductive} to get rid of $d=7,9$ as well), and one for $d\geq 10$. Interestingly, the argument to rule out the general case $d\geq 10$ can not be used for smaller values of $d$, whereas the argument in Corollary~\ref{MMinductive} cannot be pushed further to include higher values of $d$.  

\subsection{Mazzocca-Melone sets of split type 5, 7 and 9}

By Corollary~\ref{MMinductive}, a proper Mazzocca-Melone set of split type $7$ or $9$ does not exist as soon as we show that no such set of split type $5$ exists. The latter is the content of this subsection.

So let $(X,\Xi)$ be a proper Mazzocca-Melone set of split type $5$. Select a point $p\in X$ and consider the residue $(X_p,\Xi_p)$, which is a pre-Mazzocca-Melone set of split type $3$ in $\PAG^k(\K)$, for $k\leq 9$. We note that the proof of Corollary~\ref{MMinductive} implies that the tangent space at each point of $(X_p,\Xi_p)$ has dimension at most $7$. If the dimension is $6$, for every point, then we have a proper Mazzocca-Melone set and have reached our desired contradiction, in view of  \cite[Proposition 5.4]{JSHVM4}. So we may assume that the dimension is $7$, for at least one point.

Our argument is based on the following observation.

\begin{lemma}\label{crucial5}
Let $x\in X_p$ be such that $\dim T_x=7$, and let $H_1,H_2,H_3$ be three different symps of $(X_p,\Xi_p)$ containing $x$. Suppose $\dim\<H_1,H_2\>=8$, suppose also $\dim(\<H_2,H_3\>)=7$ and $\dim\<T_x(H_1),T_x(H_2),T_x(H_3)\>=7$. Then  $\dim\<H_1,H_3\><8$.
\end{lemma}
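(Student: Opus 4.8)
The plan is to translate everything into dimensions of spans and intersections, reduce the statement to a single assertion about two tangent spaces, and then close with a genuinely quadric-geometric argument; the last step is where the real difficulty lies, since no dimension count alone can decide the issue.

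First I would fix notation: write $V_i=\langle H_i\rangle$ for the $4$-space spanned by the \EQ\ $H_i$ and $T_i=T_x(H_i)$ for its $3$-dimensional tangent space at $x$, $i\in\{1,2,3\}$. The hypothesis $\dim\langle H_1,H_2\rangle=8$ is equivalent to $V_1\cap V_2=\{x\}$, and $\dim\langle H_2,H_3\rangle=7$ to $L:=V_2\cap V_3$ being a line; by (MM2) this $L$ is a singular line through $x$, and since it lies in both quadrics we have $L\subseteq T_2\cap T_3$. From $V_1\cap V_2=\{x\}$ I get $T_1\cap T_2=\{x\}$ and hence $\dim\langle T_1,T_2\rangle=6$. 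Combining this with the hypothesis $\dim\langle T_1,T_2,T_3\rangle=\dim T_x=7$ gives, by elementary dimension counting, the key unconditional fact: $P:=T_3\cap\langle T_1,T_2\rangle$ is a \emph{plane} ($3+6-7=2$) which contains $L$ (as $L\subseteq T_3$ and $L\subseteq T_2\subseteq\langle T_1,T_2\rangle$), and moreover $P\subseteq T_3\subseteq V_3$ together with $P\subseteq\langle T_1,T_2\rangle\subseteq\langle V_1,V_2\rangle$, so that $P\subseteq V_3\cap\langle V_1,V_2\rangle$.

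Next I would reduce the goal. Since $\dim\langle H_1,H_3\rangle<8$ is equivalent to $V_1\cap V_3\neq\{x\}$, and since $T_1\cap T_3\subseteq V_1\cap V_3$, it suffices to prove $T_1\cap T_3\neq\{x\}$. Because $T_1\subseteq\langle T_1,T_2\rangle$ and $T_3\cap\langle T_1,T_2\rangle=P$, one has $T_1\cap T_3=T_1\cap P$, so the entire statement comes down to showing that the plane $P$ meets the $3$-space $T_1$ in at least a line, and not merely in $x$.

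The crux --- and the step I expect to be the main obstacle --- is that this last assertion is \emph{not} forced by dimension: a plane and a $3$-space sharing the point $x$ inside the $6$-space $\langle T_1,T_2\rangle$ meet generically only in $x$, and indeed the negation $V_1\cap V_3=\{x\}$ is consistent with every dimension count above. The resolution must use the quadric structure. Here I would exploit that $P\subseteq T_3=T_x(H_3)$, so $P\cap H_3$ is the section, through the generator $L$, of the tangent cone $T_3\cap H_3$, which is a cone with vertex $x$ over a conic; by the standard behaviour of such cones (controlled via Lemma~\ref{paraquad}) this section is a pair of singular lines $L\cup L''$ through $x$ lying in $P\subseteq V_3\cap\langle V_1,V_2\rangle$. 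I would then apply the Quadrangle Lemma together with Lemma~\ref{lemma1} to $L''$, the line $L$, and a suitably chosen singular line of $H_1$ through $x$, in order to force a common point of $V_1$ and $V_3$ distinct from $x$ (concretely, to push $L''$ into $T_1$, whence $L''\subseteq T_1\cap T_3$). The genuinely delicate point, typical of the small-$d$ analysis, is the degenerate sub-case in which $P$ is the tangent plane of the cone along $L$, so that $L''=L$ and the generic construction produces no new line; disposing of this case requires a separate, finer incidence argument, and handling it uniformly is the hardest part of the proof.
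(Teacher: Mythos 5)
Your dimension bookkeeping and the reduction are sound: with $V_i=\<H_i\>$ and $T_i=T_x(H_i)$, the hypotheses do give $T_1\cap T_2=\{x\}$, the plane $P:=T_3\cap\<T_1,T_2\>$ containing the singular line $L=V_2\cap V_3$, and (via (MM2)) the equivalence of $\dim\<H_1,H_3\><8$ with $T_1\cap P\neq\{x\}$. But your proof stops exactly where the content of the lemma begins. Announcing that the Quadrangle Lemma together with Lemma~\ref{lemma1} will ``force'' the second generator $L''\subseteq P$ into $T_1$ is not an argument: to invoke the Quadrangle Lemma you must exhibit four singular lines closing into a quadrangle with a pair of non-collinear opposite vertices, and you construct none. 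Nothing in the lemmas you cite gives a singular line through $x$ lying in $\<T_1,T_2\>$ any incidence whatsoever with $H_1$; membership in the span of two tangent spaces has no such consequence (Lemma~\ref{badpoints} cannot help, since it concerns wrinkles, and every point of $L''$ is collinear with $x$, hence not a wrinkle). On top of this you explicitly leave open the degenerate case where $P$ is the tangent plane of the cone along $L$, i.e.\ $L''=L$. So the crux is missing in both the generic and the degenerate case.

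There is also a structural reason to doubt that your route can be completed as described: you never use the bound $k\leq 9$ on the dimension of the ambient space $C_p=\<X_p\>$ (which comes from (MM3) applied to the original split-type-$5$ set), whereas the paper's proof uses it essentially. The paper assumes $\dim\<H_1,H_3\>=8$ for a contradiction, projects $X_p\setminus H_1$ from $\<H_1\>$ onto a complementary space $\Sigma$ of dimension $k-5\leq 4$, shows by Quadrangle-Lemma arguments that no point of $H_2$ non-collinear with $x$ and no point of $H_3$ non-collinear with $x$ can project to the same point, and then derives the contradiction from the smallness of $\Sigma$: if $k\leq 8$ the two affine images must overlap, and if $k=9$ a finer analysis of the plane $U_2\cap U_3$ and of the projections of $T_x(H_2)$ and $T_x(H_3)$ --- this is precisely where the hypothesis $\dim\<T_1,T_2,T_3\>=7$ enters --- produces a point $w\in H_3$ and a singular line $K$ of $H_2$ whose span violates (MM2). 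That argument is irreducibly global: it exploits points of the symps away from $x$ and a pigeonhole effect inside a small ambient space. Your reduction discards everything except the linear and tangent-cone data at $x$, and that data alone is, as you yourself observe, consistent with the negation of the conclusion; whatever fills your gap will therefore have to re-import global information of the kind the paper uses, rather than a purely local incidence argument at $x$.
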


\begin{proof}
Suppose, by way of contradiction, that $\dim\<H_1,H_3\>=8$.

Let $\Sigma$ be a subspace of $C_p$ ($=\<X_p\>$) complementary to $\<H_1\>$ and consider the projection of $X_p\setminus H_1$ with center  $\<H_1\>$ onto $\Sigma$.  By assumption, $\<H_2\>$ and $\<H_3\>$ are projected onto $3$-spaces of the $(k-5)$-space $\Sigma$. Suppose, by way of contradiction, that some point $y_2$ of $H_2$ not collinear to $x$ is projected onto the same point as some point $y_3$ of $H_3$ not collinear to $x$. Then the line $\<y_2,y_3\>$ is a singular line intersecting $H_1$ in some point $y_1$. The Quadrangle Lemma immediately implies that $y_1$ is not collinear with $x$. Now, by assumption, $H_2$ and $H_3$ share a line $L$ containing $x$. There is a unique point $z_i$ on $L$ collinear to $y_i$ in $H_i$, for $i\in\{2,3\}$. If $z_2\neq z_3$, then the Quadrangle Lemma applied to $y_2,z_2,z_3,y_3$ leads to $H_2=H_3$, a contradiction. If $z_2=z_3$, then the Quadrangle lemma implies that $\<y_2,y_3,z_2\>$ is a singular plane, which contains $y_1$, implying that $\<y_1,z_2\>$ is a singular line. If $u$ is any point of $H_1$ collinear with both $x$ and $y_1$, then the Quadrangle Lemma applied to $x,u,y_1,z_2$ implies $z_2\in H_1$, so the only possibility is $z_{2}=x$. But then $x$ is collinear with $y_{2}$, a contradiction. We conclude that no point of $H_2$ not collinear to $x$ is projected onto the same point as some point of $H_3$ not collinear to $x$.

If $k\leq 8$, then $k=8$ and since the set of points of $H_i$, $i=2,3$, not collinear to $x$ is projected surjectively onto some affine part of $\Sigma$, these two affine parts intersect nontrivially, contradicting the previous paragraph.

Hence $k=9$. Let $U_i$ be the span of the projection of $H_i$, $i=2,3$. Then $U_2\cap U_3$ is a plane $\pi$, which contains the projection $x_L$ of $L$. Let $\alpha_i$ be the projection of $T_x(H_i)\setminus\{x\}$, $i=2,3$. Then every point of $U_i\setminus \alpha_i$ is the projection of a unique point of $H_i$ not collinear with $x$. The first paragraph of this proof implies that $\pi$ cannot intersect both $\alpha_2$ and $\alpha_3$ in a line. Hence we may assume that $\alpha_2=\pi$. The condition $\dim\<T_x(H_1),T_x(H_2),T_x(H_3)\>=7$ forces $\alpha_3\neq \pi$. So it is easy to see that we can find a point in $\pi$ which is the projection of a point $w\in H_3$ not collinear to $x$, and also the projection of a singular line $K$ of $H_2$ through $x$. In the $5$-space generated by $H_1$ and $K$, the plane $\<w,K\>$  intersects $\<H_1\>$ in a line, which must, by Axiom (MM2), completely belong to $X$. Now it is easy to see that this forces $\<w,K\>$ to be a singular plane, contradicting the fact that $w$ and $x$ are not $X_p$-collinear.
\end{proof}

We also need another lemma in case the field $\K$ is finite.

\begin{lemma}\label{remtruuk}
Let $|\K|=q$ be finite and consider  two non-$X_p$-collinear points $x,y$ in $X_p$. If there are $v$ singular lines through $y$ inside $X_p$, and there are no singular planes through $y$ intersecting $X_p([x,y])$ in a line, then there are at least  $(v-q-1)q+1$ different symps in $X_p$ containing $x$.  
\end{lemma}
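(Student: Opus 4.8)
The plan is to manufacture the required symps through $x$ from the singular lines through $y$, and then to use the hypothesis to force these symps to be pairwise distinct. Recall first that the residue $(X_p,\Xi_p)$ has split type $3$, so its symps are parabolic quadrics in $\PAG^4$, i.e.\ generalized quadrangles of order $(q,q)$. Write $H:=X_p([x,y])$ for the unique symp containing the non-collinear pair $x,y$ (uniqueness is the remark following Lemma~\ref{subspace}). I would first record two structural facts. (a) Any two $X_p$-collinear points of $H$ are joined by a line of $H$: the singular line through them is contained in $\langle H\rangle$ and hence, since $X_p\cap\langle H\rangle=H$, lies in $H$. (b) Every common neighbour of $x$ and $y$ lies in $H$: if $m\sim x$ and $m\sim y$, Lemma~\ref{lemma1} applied to the singular lines $\langle m,x\rangle,\langle m,y\rangle$ yields either a singular plane (forcing $x\sim y$, absurd) or a symp, which must be $H$, so $m\in H$.

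Next I would produce the supply of points. In the generalized quadrangle $H$ the point $y$ lies on exactly $q+1$ lines, so exactly $q+1$ of the $v$ singular lines through $y$ are contained in $H$; let $L$ be one of the remaining $v-q-1$ lines. No point $z\in L\setminus\{y\}$ is $X_p$-collinear with $x$: if $z\sim x$, then $\langle x,z\rangle$ and $L$ meet in $z$, and Lemma~\ref{lemma1} gives either the singular plane $\langle x,L\rangle$ (forcing $x\sim y$) or a symp containing the non-collinear pair $x,y$, hence equal to $H$ (forcing $L\subseteq H$), both absurd. Moreover each such $L$ meets $H$ only in $y$: a second common point would be $X_p$-collinear with $y$ inside $H$ and, by (a), would put $L\subseteq H$. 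Thus the $(v-q-1)q$ points $z$ lying on these lines all avoid $H$, are all non-collinear with $x$, and are pairwise distinct (distinct lines through $y$ meet only in $y$). Call this set $P$; each $z\in P$ determines the unique symp $X_p([x,z])$, which is distinct from $H$ because $z\notin H$.

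The heart of the argument — and the place where the hypothesis enters — is to show that $z\mapsto X_p([x,z])$ is injective on $P$. Suppose distinct $z,z'\in P$ give the same symp $H'\neq H$. Since $H'\neq H$ we have $y\notin H'$, so by Lemma~\ref{subspace} the set $S$ of points of $H'$ that are $X_p$-collinear with $y$ is a singular subspace of $H'$, i.e.\ a point or a line; as it contains $z$ and $z'$ it is a line $M$. For $m\in M$, Lemma~\ref{lemma1} applied to $\langle y,m\rangle$ and $M$ gives a symp (containing $y$ and the line $M$, which would force $y\in M\subseteq H'$, impossible) or the singular plane $\pi:=\langle y,M\rangle$; hence $\pi$ is a singular plane through $y$. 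Now $x\in H'$ lies off $M$, so by the generalized quadrangle axiom there is a \emph{unique} point $m_0\in M$ with $m_0\sim x$; this $m_0$ is a common neighbour of $x$ and $y$, so by (b) it lies in $H$, and by (a) the line $\langle y,m_0\rangle$ is a line of $H$ through $y$. Since $\pi\not\subseteq\langle H\rangle$ (a parabolic quadric in $\PAG^4$ contains no singular plane) we get $\pi\cap H=\pi\cap\langle H\rangle=\langle y,m_0\rangle$, so $\pi$ is a singular plane through $y$ meeting $H$ in a line — contradicting the hypothesis.

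Therefore $S$ is never a line, each symp $H'\neq H$ through $x$ contains at most one point of $P$, and the $(v-q-1)q$ symps $X_p([x,z])$ are pairwise distinct and all different from $H$. Adding $H$ itself gives at least $(v-q-1)q+1$ symps through $x$, as claimed. I expect the injectivity step to be the main obstacle: it requires correctly reading off the generalized-quadrangle structure of $H$ (the $q+1$ lines through $y$, which is exactly what produces the $v-q-1$), and then converting the scenario ``$H'$ captures two points of $P$'' into a forbidden singular plane through $y$ via facts (a), (b) and the quadrangle axiom. The remaining steps are routine bookkeeping with Lemma~\ref{lemma1} and the identity $X_p\cap\langle H\rangle=H$.
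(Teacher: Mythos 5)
Your proof is correct and takes essentially the same route as the paper's: count the $v-q-1$ singular lines through $y$ not contained in $X_p([x,y])$, take the $(v-q-1)q$ points on them other than $y$, and show that $z\mapsto X_p([x,z])$ is injective on this set. The paper compresses the injectivity step into a one-line appeal to the Quadrangle Lemma; your detailed argument via Lemma~\ref{subspace}, Lemma~\ref{lemma1} and the generalized-quadrangle structure of the symps supplies precisely the omitted details, and in particular correctly identifies the one place where the hypothesis on singular planes through $y$ is actually needed.
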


\begin{proof}
Exactly $v-q-1$ singular lines through $y$ are not contained in  the symp $\xi$ determined by $x$ and $y$. On these lines lie in total $(v-q-1)q+1$ points (including $y$). The Quadrangle Lemma implies that no two symps defined by these points and $x$ coincide. The lemma follows. 
\end{proof}

We now consider the residue $((X_p)_{p'},(\Xi_p)_{p'})$ of a point $p'$ of $X_p$ and assume that it spans a space of dimension $6$. We denote this residue briefly by $(X',\Xi')\subseteq\PAG^6(\K)$. Its structure is explained by Lemma~\ref{residuesplittype3}, from which we in particular recall that, if $X'$ does not contain singular lines. then $(X',\Xi')$ is a pre-Mazzocca-Melone set of split type 1. 


\begin{lemma}\label{semiaffine}
Let $C$ be a conic of $(X',\Xi')$ and $x\in X'\setminus C$. Then there exists at most one conic containing $x$ and disjoint from $C$.
\end{lemma}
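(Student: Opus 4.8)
The plan is to argue by contradiction: assume there are two distinct conics $C_1,C_2\in\Xi'$ through $x$, both disjoint from $C$, and deduce $C_1=C_2$. The first task is to pin down the mutual position of the three planes $\<C\>,\<C_1\>,\<C_2\>$ using (MM2) together with the fact that a non-singular conic contains no line. Since $C_i$ is disjoint from $C$, (MM2) gives $\<C\>\cap\<C_i\>\subseteq X'\cap\<C\>\cap\<C_i\>=C\cap C_i=\emptyset$, so $\<C\>$ and $\<C_i\>$ are skew and $\<C,C_i\>$ is a $5$-space, $i=1,2$. Applying (MM2) to $C_1$ and $C_2$ gives $\<C_1\>\cap\<C_2\>\subseteq C_1\cap C_2$; as $x$ lies in this intersection and a conic contains no line, we obtain $\<C_1\>\cap\<C_2\>=\{x\}$, so $\<C_1,C_2\>$ is a $4$-space.

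Next I would run the dimension count inside $\PAG^6(\K)$. If the $4$-space $\<C_1,C_2\>$ were skew to the plane $\<C\>$, their span would have dimension $4+2+1=7$, impossible in $\PAG^6(\K)$; hence $\<C\>\cap\<C_1,C_2\>$ contains a point $z$, and $z\neq x$ because $x\notin\<C\>$. To exploit $z$ I would project from $x$ onto a complementary $\PAG^5(\K)$: every conic through $x$ projects to a line, and is recovered as the set of $X'$-points in the preimage plane of that line, so it suffices to prove that the images $\overline{C_1},\overline{C_2}$ coincide. By the three spans computed above, these images are skew lines (as $\<C_1,C_2\>$ has dimension $4$), each skew to the plane $\overline{\<C\>}$ (as $\<C,C_i\>$ has dimension $5$); and the forced point $z$ projects to a point $\overline{z}\in\overline{\<C\>}$ lying in the $3$-space $\<\overline{C_1},\overline{C_2}\>$, hence on the unique transversal of the two skew lines through $\overline{z}$, which therefore meets $\overline{C_1}$, $\overline{C_2}$ and $\overline{\<C\>}$.

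Finally I would pull this transversal back to $\PAG^6(\K)$, producing points $a\in C_1$ and $b\in C_2$ together with a plane $\<x,a,b\>$ that meets $\<C\>$; here $\<x,a\>$ and $\<x,b\>$ are secants of $C_1$ and $C_2$. I would then feed $a,b$ into (MM2) and the Quadrangle Lemma to convert the forced projective incidence with $\<C\>$ into a genuine $X'$-incidence: the intersection point in $\<C\>$ must lie in $X'\cap\<C\>=C$ and sit on a singular line, i.e. a point of $C$ that is $X'$-collinear with $x$, or else $C_1$ and $C_2$ are forced to share a second point — either outcome contradicting the disjointness of $C$ from $C_i$ or the equality $\<C_1\>\cap\<C_2\>=\{x\}$. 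I expect this last step to be the main obstacle, since turning the incidence at $z$ into an $X'$-incidence requires carefully distinguishing whether $a$ and $b$ are $X'$-collinear and tracking precisely which lines of the plane $\<x,a,b\>$ are singular.
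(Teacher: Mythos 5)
Your first three steps are correct, and they follow a genuinely different route from the paper: everything is carried out inside the residue $(X',\Xi')$ using (MM2) and projective dimension counts. (Two small caveats: since the paper warns that $(X',\Xi')$ need not be a pre-Mazzocca-Melone set, the facts you use -- (MM2) for $\Xi'$ and the equalities $X'\cap\<C\>=C$, $X'\cap\<C_i\>=C_i$ -- require a word of justification; both do hold, being inherited from the pre-Mazzocca-Melone set $(X_p,\Xi_p)$.) The genuine gap is the final step, which you yourself flag as uncertain, and which cannot be closed with the tools you invoke. First, the pull-back can fail: $\overline{a}$ or $\overline{b}$ may be the image of the tangent line to $C_1$ or $C_2$ at $x$, in which case there is no point $a\in C_1$ or $b\in C_2$ above it; and since the transversal through $\overline{z}$ is unique, and $z$ may be the unique point of $\<C\>\cap\<C_1,C_2\>$, there is no freedom to avoid this degenerate case. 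Second, and more fundamentally, even when $a$ and $b$ exist, the plane $\<x,a,b\>$ is not a member of $\Xi'$, so (MM2) says nothing about $\<x,a,b\>\cap\<C\>$; the Quadrangle Lemma requires four singular lines, whereas $\<x,a\>$ and $\<x,b\>$ are secants of $C_1$ and $C_2$, hence not singular; and if $a,b$ are non-collinear, the unique conic through them need not contain $x$ nor lie in $\<x,a,b\>$, so it does not see $z$ either. Thus the forced incidence $z\in\<x,a,b\>\cap\<C\>$ has no $X'$-theoretic content, and no contradiction follows from it.

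The reason no local patch will work is visible in what your argument never uses: the hypothesis that $X'$ spans $\PAG^6(\K)$, and any information coming from the ambient set $(X_p,\Xi_p)$. The paper's proof needs both. Lemma~\ref{crucial5}, proved upstairs in $(X_p,\Xi_p)$, yields the much stronger statement that $\<C,C_1,C_2\>$ is only $5$-dimensional, whereas your count only shows that $\<C\>$ meets $\<C_1,C_2\>$, i.e.\ $\dim\<C,C_1,C_2\>\leq 6$. The spanning hypothesis then provides a point $y\in X'$ outside this $5$-space and a conic $E$ through $y$ and a suitable point $z_1\in C_1$; two further applications of Lemma~\ref{crucial5} force $E$ to meet both $C$ and $C_2$, so that $\<E\>$ is spanned by three of its points lying inside $\<C,C_1,C_2\>$, contradicting $y\in E$. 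By contrast, nothing in (MM1) or (MM2) constrains your point $z$: it lies in $\<C\>$ and, a priori, in no other plane of $\Xi'$, while the axioms only govern pairwise intersections of planes of $\Xi'$ and quadrangles of singular lines. Some global ingredient of this kind is indispensable, and it is exactly what your outline is missing.
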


\begin{proof}
Suppose that there are at least two conics $D_1,D_2$ containing $x$ and disjoint from $C$. We claim that $\dim(\<C,D_1,D_2\>)=5$. Indeed, this dimension is at least 5, by Axiom~(MM2) and the fact that $D_1$ and $C$ are disjoint, hence span a $5$-space. So suppose the dimension is 6 (this is the maximum). Denote by $H_1$ the symp in $X_p$  corresponding to $C$, likewise for $H_2$ corresponding to $D_1$ and $H_3$ to $D_2$. Then, since the intersection of $\<H_i\>$ and $\<H_j\>$ is contained in a singular subspace through $p'$, $i,j=1,2,3$, $i\neq j$, we see that 
$\dim \<H_1,H_2\>=\dim\<H_1,H_3\>=8$ and $\dim\<H_2,H_3\>=7$. Moreover, by our assumption, $\dim\<T_{p'}(H_1),T_{p'}(H_2),T_{p'}(H_3)\>=6+1=7$. This contradicts Lemma~\ref{crucial5}. The claim is proved. 

 Let $y\in X'$ be a point off $\<C,D_1,D_2\>$ and let $z_1$ be a point of $D_1\setminus\{x\}$. Since there is at most one singular line through $y$ meeting $D_1$, we may choose $z_1$ in such a way that there is a conic $E\subseteq X'$ containing $z_1$ and $y$. Now $\<C,D_1,E\>$ coincides with $\PAG^6(\K)$ and hence by Lemma~\ref{crucial5} the conic $E$ intersects $C$, say in the point $u$. By the same token, now observing that $\<E,C,D_2\>$ coincides with $\PAG^6(\K)$, the conic $E$ intersects $D_2$, say in $z_2$. Now the points $u,z_1,z_2$ of $E$ generate $\<E\>$ and hence $E\subseteq\<C,D_1,D_2\>$, contradicting $y\in E$. This contradiction proves the lemma.
\end{proof}



For the rest of this section, we assume that $\dim\<X'\>=6$. Our intention is to show, in a series of lemmas, that this can only happen when $|\K|=2$. 

\begin{lemma}
The set $X'$ does not contain singular $3$-spaces.
\end{lemma}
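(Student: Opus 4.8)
The plan is to argue by contradiction, first extracting the rigid linear structure that singular subspaces are forced to have in split type $1$, and then showing that a singular $3$-space produces a whole $3$-space worth of conics through a single external point, whose mutual position cannot be reconciled with the ambient $\PAG^{6}(\K)$.

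First I would record the basic structural fact. Since a conic contains no projective line, no singular line can lie inside a symp of $(X',\Xi')$; hence Lemma~\ref{lemma1} (read with $d=1$, so $\lfloor d/2\rfloor=0$) forces any two singular lines through a common point to span a singular plane, and iterating this, the singular lines through a point $x$ span a unique maximal singular subspace $M_x$. Consequently $X'$-collinearity is an equivalence relation whose classes are exactly the maximal singular subspaces, and any point lying outside $\langle M\rangle$, for a maximal singular subspace $M$, is non-collinear with every point of $M$ (otherwise it would lie in $M_x=M$). Now suppose, for contradiction, that $X'$ contains a singular $3$-space, and let $\Sigma$ be a singular $3$-space inside a maximal singular subspace $M$. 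As $X'$ is a proper residue it carries conics, so $M\neq\PAG^{6}(\K)$; since $X'$ spans $\PAG^{6}(\K)$ I may pick $y\in X'\setminus\langle M\rangle$, and then $y$ is non-collinear with all of $\Sigma$. For each $s\in\Sigma$ the pair $\{y,s\}$ is non-collinear, so by (MM1) and (MM2) there is a unique conic $C_s$ through $y$ and $s$; distinct $s\neq s'$ give distinct conics, since a common conic would contain the singular line $\langle s,s'\rangle\subseteq\Sigma$, and each $C_s$ meets $\Sigma$ in exactly $s$.

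Next I would exploit this $3$-space of conics through $y$. Projecting from $y$ onto a $\PAG^{5}(\K)$, the solid $\langle y,\Sigma\rangle$ maps isomorphically onto $\bar\Sigma\cong\PAG^{3}(\K)$, and each $C_s$ maps onto a line $\ell_s$ through the image $\bar s$ of $s$, with $\ell_s$ either meeting $\bar\Sigma$ only in $\bar s$ or lying entirely in $\bar\Sigma$. The goal is to combine this configuration of lines, one through each point of a $\PAG^{3}(\K)$, with the scarcity of disjoint conics furnished by Lemma~\ref{semiaffine} (through any point at most one conic is disjoint from a fixed conic) and with the span bounds coming through Lemma~\ref{crucial5}. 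Concretely, I expect to force the conics $C_s$ to meet pairwise in a further point, which produces a map from the $3$-dimensional family $\Sigma$ into a fixed conic $C_{s_0}\cong\PAG^{1}(\K)$; the collision this forces among the $C_s$ should violate the uniqueness of the conic through two non-collinear points together with the fact that $C_s\cap\Sigma=\{s\}$, yielding the desired contradiction.

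The main obstacle is precisely this last conversion. A naive dimension count does not suffice: a singular $3$-space together with its external conics is in fact consistent with a $6$-dimensional span, because all the ``second'' intersection points can be arranged to project into a single $\PAG^{1}(\K)$ modulo $\langle y,\Sigma\rangle$. The contradiction must therefore come from the finer incidence rigidity of Lemma~\ref{semiaffine} and from the total non-collinearity of $y$ with $\Sigma$, not from dimensions alone. A further delicate point to watch is that, unlike the case $d\geq 2$ (cf. Lemma~\ref{lemma2}), singular lines of the split type $1$ set $X'$ need not lie in any symp, so no (MM3)-type tangent bound is available for $X'$ itself; one must instead track the conics $C_s$ and their chords $\langle y,s\rangle$ explicitly, which is where the bulk of the technical work will lie.
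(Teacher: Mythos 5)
Your opening structural analysis is sound: since conics contain no lines, Lemma~\ref{lemma1} does make $X'$-collinearity an equivalence relation whose classes are the maximal singular subspaces, a point $y$ outside such a class $M$ is collinear with no point of $M$, and for each $s$ in a singular $3$-space $\Sigma\subseteq M$ there is a unique conic $C_s$ through $y$ and $s$, meeting $\Sigma$ only in $s$. The gap is in the engine of your contradiction. Two distinct points of a conic are never $X'$-collinear (their joining line is a secant, hence not singular), so by (MM1) and (MM2) there is a \emph{unique} conic through any two such points; consequently two distinct conics meet in at most one point. Your conics $C_s$ all contain $y$, so they already meet pairwise, and no two of them can ever meet ``in a further point''; in particular the map $\Sigma\to C_{s_0}$ you want to build is not definable at all. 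Moreover, the tool you invoke to force these second intersections, Lemma~\ref{semiaffine}, is vacuous for your family: it only restricts conics \emph{disjoint} from a fixed conic, and no two members of $\{C_s\}$ are disjoint, while Lemma~\ref{crucial5} only bounds spans and creates no intersection points. The only way to make Lemma~\ref{semiaffine} bite is to apply it to a conic $D$ \emph{not} through $y$ (such $D$ exist): then all but at most one of the $C_s$ meet $D$, and uniqueness makes $s\mapsto C_s\cap D$ an injection of $\Sigma$, minus at most one point, into $D$. This repaired version of your idea does give a contradiction when $\K$ is finite of order $q$, since $q^3+q^2+q>q+1$.

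But that repair exposes the second, more serious, gap: over an infinite field an injection from a $3$-space into a conic is set-theoretically unobjectionable, so no ``collision'' is forced and the argument produces nothing. This matters because the present lemma is proved in the paper \emph{before} the finiteness of $\K$ is known (Lemma~\ref{finite} is deduced afterwards, and its proof uses the present lemma), so the proof must work over arbitrary, in particular infinite, fields. That is exactly where the paper's proof does its real work: it projects $X'\setminus U$ from the singular space $U$ itself, proves this projection injective, shows every conic meets $U$, uses tangent lines at points of $U$ to show that conics through a common point of $U$ project to lines through a common point ``at infinity'', and finally exploits the fact that families of pairwise-meeting lines lie on a hyperbolic quadric or in a plane. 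Neither these ideas nor any substitute for them appears in your proposal, so the infinite case, which is the bulk of the lemma, remains entirely open.
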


\begin{proof}
Let, for a contradiction, $U$ be a singular $\ell$-space with $\ell\geq 3$ and $\ell$ maximal.  Since $X'$ contains at least one plane $\pi$ such that $\pi\cap X'$ is a conic, we see that $\ell\leq 4$. If $\ell=4$, then we consider a point $x\in X'$ outside $U$. For any $u\in U$, the line $\<u,x\>$ is non-singular, as repeated use of Lemma~\ref{lemma1} would otherwise lead to a singular subspace of dimension $5$. Pick two distinct points $u,v\in U$. So we have a conic $C\subseteq X$ through $x$ and $u$, and for each point $y$ of $C\setminus\{u\}$, we have a conic  $C_y$ containing $v$ and $y$. Let $y_1,y_2$ be two distinct points of $C\setminus\{u\}$. An arbitrary $5$-space $W$ through $U$ not containing the tangent lines at $v$ to the conics $C_{y_1}$ and $C_{y_2}$, respectively, intersects $C_{y_i}$ in a point $z_i$, $i=1,2$. The line $\<z_1,z_2\>$ intersects $U$ and so is singular, a contradiction.

Next suppose $\ell=3$.  Let $\pi$ be a plane in   $\PAG^6(\K)$ skew to $U$. If $\pi$ contains a conic, then, as before, the projection with center $\pi$ onto $U$ is injective, implying that $X'=U\cup (\pi\cap X')$, a contradiction. Hence every conic intersects $U$. Also, the projection of $X'\setminus U$ with center  $U$ onto $\pi$ is injective, as the line joining two points with same image must meet $U$ and hence is singular, a contradiction as above. If $|\K|=q$ is finite, then there are $q^3+q^2+q+1$ conics through a fixed point of $X'$ outside $U$ and some point of $U$, giving rise to $q^4>q^2+q+1=|\pi|$ points of $X'$ outside $U$, a contradiction. So we may assume that $\K$ is infinite. We claim that no $4$-space through $U$ contains a tangent $T$ to a conic $C$ at a point of $U$ and a point $v$ of $X'\setminus U$. Indeed, let $u\in C\setminus U$. Then the conic $D$ containing  $u$ and $v$ is contained in $\<U,C\>$ (as $D$ contains a point of $U$), contradicting the injectivity of the projection with center $U$ onto $\pi$ ($C$ and $D$ project into the same line). 
This implies that all conics in $X'$ through the same point $u$ of $U$ project onto (``affine'') lines of $\pi$ sharing the same point $p_u$ (``at infinity'') corresponding to the tangents to these conics at $u$. For different $u$, the points $p_u$
are also different as otherwise, by injectivity of the projection, we find two conics through a common point of $X_p\setminus U$ intersecting in all points but the ones in $U$, a contradiction. This now implies that two different conics containing a (possibly different) point of $U$ meet in a unique point of $X$. We choose a plane $\alpha\subseteq U$ and project $X'\setminus \alpha$ from $\alpha$ onto some skew $3$-space $\Sigma$. Let $u_i$, $i=1,2,3$, be three distinct points in $\alpha$. The conics through these points project onto three families of (``affine'') lines such that lines from different families intersect in a unique point.  Considering two families, we see that, since $|\K|>2$, these lie either on a hyperbolic quadric, and the third family cannot exist, or in a plane. In the latter case, we easily see that all points of $X'\setminus U$ are contained in a $5$-space together with $\alpha$, a contradiction considering a conic through some point of $U\setminus \alpha$.
\end{proof}

\begin{lemma}\label{finite}
The field $\K$ is finite.
\end{lemma}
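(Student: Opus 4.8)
The plan is to argue by contradiction: assuming $\K$ is infinite, I will produce a point lying on two distinct conics both disjoint from a fixed conic, contradicting Lemma~\ref{semiaffine}. The whole argument hinges on the fact that over an infinite field a finite union of proper subspaces never covers the ambient projective space, so one may repeatedly choose points and subspaces ``in general position''. This is precisely the ingredient that fails for finite fields, which explains why the present lemma only yields finiteness rather than outright nonexistence.

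First I would record the structural facts already available. By the preceding lemma $X'$ contains no singular $3$-space, so every singular subspace is a point, a line or a plane, and by Lemma~\ref{lemma1} two singular lines through a common point lie either in a conic or in a singular plane. I would also note that $(X',\Xi')$ does contain two disjoint conics: this is already presupposed by the statement of Lemma~\ref{semiaffine}, and it is exactly the feature forcing $\dim\<X'\>=6$ rather than $5$. Fix such a conic $C$ with plane $\pi=\<C\>$ and a conic $D$ disjoint from $C$.

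Next I would project $X'\setminus\pi$ from $\pi$ onto a complementary $3$-space $\Sigma$ and analyse the fibres. Two points $x,y$ have the same image exactly when the chord $\<x,y\>$ meets $\pi$; using the absence of singular $3$-spaces, Lemma~\ref{lemma1}, and the span restriction of Lemma~\ref{crucial5} (which caps $\dim\<C,D_1,D_2\>$ at $5$ whenever $D_1,D_2$ pass through a common point and both miss $C$), I would show that such a chord is either singular or forces the conic $[x,y]$ to meet $C$. Consequently the conics disjoint from $C$ project essentially injectively, each to a conic or a line of $\Sigma$ lying in a plane controlled by Lemma~\ref{crucial5}. Then comes the heart of the argument: working over the infinite field, I would select a point $x\in X'$ avoiding the finitely many tangent planes and spans attached to $C$ and $D$, and build two $5$-spaces through $\pi$ and $x$ whose intersections with $X'$ are forced, by Lemmas~\ref{quadric} and~\ref{paraquad}, to contain a conic through $x$. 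The general position of $x$ guarantees that these two conics are distinct and that each misses $C$ (their traces on $\pi$ avoid $C$ by construction), contradicting Lemma~\ref{semiaffine}.

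The main obstacle is exactly the fibre analysis of the projection together with guaranteeing genuine disjointness and distinctness of the two constructed conics: one must rule out that the two candidate conics secretly coincide or secretly meet $C$. This is where the freedom to avoid a finite union of proper subspaces — the tangent planes of the finitely many relevant conics, the span $\<C,D\>$, and the $5$-spaces produced by Lemma~\ref{crucial5} — is indispensable. Over a finite field this freedom disappears and the construction need not go through, which is consistent with the fact that the conclusion is merely that $\K$ is finite.
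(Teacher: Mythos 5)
Your proposal has two genuine gaps, and the first is fatal to the whole strategy. You begin by fixing a conic $C$ and a conic $D$ disjoint from $C$, asserting that the existence of such a pair is ``already presupposed by the statement of Lemma~\ref{semiaffine}'' and is ``exactly the feature forcing $\dim\<X'\>=6$''. Neither claim is valid: Lemma~\ref{semiaffine} is an \emph{at most one} statement, vacuously true when no conic disjoint from $C$ exists, so it presupposes nothing; and $\dim\<X'\>=6$ does not force disjoint conics to exist --- indeed the configuration that survives this whole section of the paper ($|\K|=2$, seven points, no singular lines) spans a $6$-space while \emph{every} pair of conics intersects. So your construction may have nothing to start from. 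The paper's proof is careful on exactly this point: it never assumes disjoint conics exist, but uses Lemma~\ref{semiaffine} only in the opposite direction, to guarantee that one can find three conics $D_1,D_2,D_3$ through a point $x$ of $U$ that \emph{do} meet each of four conics $C_1,\ldots,C_4$ through a second point $x'$ of $U$ (here $U$ is a singular line if one exists, and a quadratic plane otherwise).

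The second gap is what you yourself call the heart of the argument: producing two $5$-spaces through $\<C\>$ and a general point $x$ whose intersections with $X'$ are ``forced, by Lemmas~\ref{quadric} and~\ref{paraquad}, to contain a conic through $x$''. Those lemmas are statements about a single split quadric inside its own ambient space; $X'$ is not a quadric, and nothing forces a prescribed $5$-space of $\PAG^6(\K)$ to meet $X'$ in more than scattered points. Even if you locate a point $y\in X'$ in your $5$-space not collinear with $x$, the plane of the conic on $x$ and $y$ need not lie in that $5$-space. So no mechanism is given for constructing the two conics, let alone for keeping both disjoint from $C$, and general-position choices over an infinite field cannot supply one. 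What is actually needed --- and what the paper proves --- is a structural fact about \emph{all} conics through two fixed points: after projecting from $U$, the conics through $x$ and through $x'$ project into two families of lines, each line of one family meeting the three chosen lines of the other family in distinct points, hence all these lines lie either in a plane or in the two reguli of a hyperbolic quadric $\cH$ in a $3$-space. A third point $x''$ of $U$ then yields a third family of lines, each of which would have to meet infinitely many generators of \emph{each} system of $\cH$, which is impossible; and the planar alternative confines $X'$ (together with $U$) to a $4$- or $5$-space, contradicting $\dim\<X'\>=6$. The contradiction thus lives in the global structure of the families of conics, not in the position of a single cleverly chosen point, and your outline is missing precisely this regulus argument.
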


\begin{proof}
Assume $\K$ is infinite. If there is a non-trivial singular subspace in $X'$, then we let $U$ be a singular line; if there is no (non-trivial) singular subspace, then we let $U$ be a quadratic plane. Consider a point $x\in U\cap X'$ and let $\cF$ be the family of all conics in $X'$ containing $x$.  Pick a point $x'\in (U\cap X')\setminus\{x\}$ and let $C_1,\ldots,C_4$ be four distinct conics containing $x'$. By Lemma~\ref{semiaffine}, there are at most four members of $\cF$ that do not intersect all of $C_1,\ldots,C_4$. Hence we can find three members $D_1,D_2,D_3\in\cF$ intersecting all of $C_1,\ldots,C_4$. We now project $X'\setminus U$ with center $U$ onto a complementary space of $U$ in $\<X'\>$. Using Axiom~(MM2), this projection is injective on the set of points not $X'$-collinear with a point of $U$. Hence we see that the projections $C_1',\ldots,C_4'$ of $C_1\setminus\{x\},\ldots,C_4\setminus\{x\}$ are contained in lines which meet the three lines spanned by the projections of $D_1\setminus\{x'\},D_2\setminus\{x'\},D_3\setminus\{x'\}$ in distinct points, and hence these lines are contained in either a plane $\pi$, or a hyperbolic quadric $\cH$ in some $3$-space. Now, every member of $\cF$ intersects at least three of $C_1,\ldots,C_4$. Consequently, the projections of the members of $\cF$ are contained in $\pi$ or in $\cH$, and in the latter case, they are contained in lines belonging to one system of generators, while the projections of the conics through $x'$ are contained in lines of the other system.  Considering a third point $x''$ in $U\cap X'$, we obtain a third set of lines in $\pi$ or in $\cH$. In the latter case, all these lines  intersect infinitely many generators from each system, a contradiction. Hence all points of $X'$ not $X'$-collinear with a point of $U$ are contained, together with $U$, in either a $4$-space (if $U$ is singular), or a $5$-space (if $U$ is a quadratic plane). This is a contradiction as soon as $U$ is not singular, or $U$ is singular and not contained in a singular subspace of  dimension at least $3$ (as otherwise $X'$ does not span a $6$-space). But there are no singular $3$-spaces by the previous lemma. This contradiction completes the proof of the lemma. 
\end{proof}

\begin{lemma}\label{lemma66}
There are no singular planes.
\end{lemma}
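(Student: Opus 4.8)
The plan is to argue by contradiction: suppose a singular plane exists, and, since $X'$ contains no singular $3$-space, take a \emph{maximal} one, say $\pi$. The first and decisive step is to show that $\pi$ is \emph{isolated}, in the sense that no point of $X'\setminus\pi$ is $X'$-collinear with any point of $\pi$. Indeed, if $w\in X'\setminus\pi$ were collinear with some $u_0\in\pi$, then the singular line $\langle u_0,w\rangle$ and any line $M\subseteq\pi$ through $u_0$ meet in the point $u_0$, so by Lemma~\ref{lemma1} they lie in a common \EQ\ or in a singular plane; a \EQ\ is a conic and contains no line, so these two distinct singular lines must span a singular plane $\pi'\neq\pi$ meeting $\pi$ in the singular line $M$. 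Applying Lemma~\ref{lemma1} once more to the pair $\pi,\pi'$ (which share the singular line $M$) and using that there are no singular $3$-spaces forces $\pi$ and $\pi'$ into a common \EQ, again impossible since a conic cannot contain two planes. Hence $\pi$ is isolated.

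Two consequences follow at once. First, any two distinct points of $\pi$ are $X'$-collinear, whereas no two points of a conic are collinear; therefore every \EQ\ meets $\pi$ in at most one point, i.e.\ there are no ``secant'' conics. Second, projecting $X'\setminus\pi$ from $\pi$ onto a complementary $3$-space $\Sigma\cong\PAG^3(\K)$ is injective: two external points with the same image would span a line meeting $\pi$, hence a line with three points of $X'$, i.e.\ a singular line through a point of $\pi$, contradicting isolation. Writing $q=|\K|$, which is finite by Lemma~\ref{finite}, this already gives $|X'\setminus\pi|\le q^3+q^2+q+1$.

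Next I would extract the combinatorial skeleton. Fixing $u\in\pi$, the \EQ s through $u$ meet $\pi$ only in $u$, partition $X'\setminus\pi$ into blocks of size $q$ (two distinct conics through $u$ share only $u$, and each external point lies on the unique \EQ\ it spans with $u$ by isolation), and every external point lies in exactly one block for each of the $q^2+q+1$ choices of $u$, with blocks from different $u$'s meeting in at most one point. Double counting the pairs of external points carried by these conics, together with the injection into $\Sigma$, squeezes $|X'\setminus\pi|$ between $q^3$ and $q^3+q^2+q+1$, so the projection is essentially a bijection onto $\PAG^3(\K)$ and the external points inherit a highly rigid net-like structure.

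The main obstacle is the final quantitative contradiction, because the crude counting just described is consistent for every $q$. To close it I would analyse the projection more finely: a \EQ\ meeting $\pi$ projects to a line of $\Sigma$ covering all but one of its points, while a \EQ\ disjoint from $\pi$ projects to a genuine conic of $\Sigma$. Tracking the ``missing'' points of the tangent-conic image lines and using Lemma~\ref{semiaffine} to bound the \EQ s disjoint from $\pi$ should force the image in $\PAG^3(\K)$ to support two incompatible families of lines and conics through a common point, which cannot coexist. This delicate interplay between the net structure on $X'\setminus\pi$ and the projective incidences in $\Sigma$ is where the real work lies.
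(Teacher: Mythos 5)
Your proposal stops short of a proof: the structural work is correct, but the contradiction is never actually reached. The isolation claim (no point of $X'\setminus\pi$ is $X'$-collinear with a point of $\pi$) is a valid application of Lemma~\ref{lemma1} combined with the absence of singular $3$-spaces, and the consequences you draw from it are sound: every conic meets $\pi$ in at most one point, the projection from $\pi$ to a complementary $3$-space is injective, and $q^3\le|X'\setminus\pi|\le q^3+q^2+q+1$. But at the decisive moment you offer only a plan (``tracking the missing points \dots should force \dots two incompatible families''), and you say yourself that this is ``where the real work lies''. That is a genuine gap, not a detail: as you correctly observe, the counting you have set up is consistent for every $q$, so some new idea is required, and the route you gesture at (a fine analysis of line and conic images in $\PAG^{3}(\K)$) is neither carried out nor clearly capable of being carried out.

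The frustrating part is that you already hold every ingredient of the actual contradiction, which is the paper's two-line argument and needs no projection at all. Fix an external point $x$. By your isolation claim, joining $x$ to the $q^2+q+1$ points of $\pi$ (recall $|\K|=q$ by Lemma~\ref{finite}) yields $q^2+q+1$ distinct conics through $x$ --- distinct because a conic meets $\pi$ at most once --- and these conics pairwise meet only in $x$, exactly as in your ``blocks''. Now take any conic $C$ of $X'$ with $x\notin C$; such a conic exists, e.g.\ join a point $y\in[x,u_1]\setminus\{x,u_1\}$ to a second point $u_2\in\pi$: if the resulting conic contained $x$ it would share the two non-collinear points $x,y$ with $[x,u_1]$, hence equal it, and then it would meet $\pi$ twice. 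Since distinct conics through $x$ meet $C$ in distinct points, at most $|C|=q+1$ of the conics $[x,u]$, $u\in\pi$, can meet $C$. On the other hand, Lemma~\ref{semiaffine} says at most one conic through $x$ is disjoint from $C$, so at least $q^2+q$ of them meet $C$. Since $q^2+q>q+1$ for $q\ge2$, this is the contradiction. (Incidentally, your isolation step is a worthwhile supplement here: it justifies the paper's tacit assumption that every point of $\pi$ is joined to $x$ by a conic rather than by a singular line.)
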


\begin{proof}
Suppose there is a singular plane $U$. By Lemma~\ref{finite}, we may assume that $|\K|=q$, a prime power. Consider a point $x\in X'\setminus U$. By joining with points from $U$, we obtain a set of $q^2+q+1$ conics through $x$, and, in view of Lemma~\ref{semiaffine}, at least $q^2+q$ of them must meet an arbitrary conic in $X'$ not containing $x$, contradicting $q^2+q>q+1$. 
\end{proof}

\begin{lemma}
The field $\K$ has only two elements; moreover there are no singular lines in $X'$ and $|X'|=7$ (hence every pair of conics intersects).
\end{lemma}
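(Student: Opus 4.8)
The plan is to extract from the hypotheses a rigid combinatorial incidence structure on $(X',\Xi')$ and then to collide it with the fact, assumed for the rest of the section, that $X'$ spans $\PAG^6(\K)$.

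First I would record the elementary incidence facts. Since a nondegenerate conic in a plane contains no line, two points of a common conic are never $X'$-collinear, so (MM2) forces two distinct conics to meet in at most one point (a second common point would span a singular line lying inside a conic). Combining Lemma~\ref{lemma1} with the absence of singular planes (the previous lemma), two singular lines through a common point would lie in a common conic, which would then contain a line; hence any two distinct singular lines are disjoint, every point lies on at most one singular line, and, by the same mechanism, a point $x\notin C$ is $X'$-collinear with at most one point of a conic $C$.

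Next I would count. For a fixed point $x$, the conics through $x$ meet pairwise only in $x$ and partition the points non-collinear with $x$ into classes of size $q$, so if $t_x$ denotes their number then $|X'|-1-s_x=q\,t_x$, where $s_x\in\{0,q\}$ records whether $x$ lies on a singular line. Fixing in addition a conic $C$ not through $x$, the assignment $c\mapsto[x,c]$ is a bijection between the points of $C$ non-collinear with $x$ and the conics through $x$ meeting $C$; together with Lemma~\ref{semiaffine} (at most one conic through $x$ is disjoint from $C$) this pins $t_x\in\{q,q+1,q+2\}$ and, after discarding the options inconsistent with the collinearity count, forces $|X'|\in\{q^2+q+1,(q+1)^2\}$, the non-collinear pairs carrying, once singular lines have been removed, the incidence structure of a projective plane of order $q$ or of an affine plane of order $q+1$.

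The decisive and hardest step is to feed in $\dim\langle X'\rangle=6$, for which the span control of Lemma~\ref{crucial5} (already exploited in the proof of Lemma~\ref{semiaffine}, confining suitable triples of conics to a $5$-space) is essential. I would use it to show that a singular line, or an affine parallel class of mutually disjoint conics, would collapse $\langle X'\rangle$ into a space of dimension at most $5$, a contradiction; this simultaneously removes all singular lines and the affine alternative, leaving the projective-plane case $|X'|=q^2+q+1$. The same bound should then be incompatible with $q\geq 3$ — for larger $q$ one is forced to cram too many conics through a point into a $5$-space while $X'$ must still reach dimension $6$ — so $q=2$, whence $|X'|=7$ and, the plane being of order $2$, every two conics meet. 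The main obstacle I anticipate is precisely this last dimensional bookkeeping: turning the span restriction of Lemma~\ref{crucial5} into clean statements about the spans of a singular line, of a parallel class, and of the pencil of conics through a point, and verifying that only $q=2$ survives. The purely combinatorial parameters $|X'|\in\{q^2+q+1,(q+1)^2\}$ are by themselves consistent for every $q$ (a direct flag count reveals no contradiction), so both the finiteness of $\K$ (Lemma~\ref{finite}) and the embedding in $\PAG^6(\K)$ must be used in an essential way.
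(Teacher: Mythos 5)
Your setup (conics pairwise meeting in at most one point, the counts $t_x\in\{q+1,q+2\}$ and $|X'|\in\{q^2+q+1,(q+1)^2\}$) agrees with the paper, but the decisive step is where your proposal has a genuine gap, and the route you sketch for it cannot work as stated. You propose to kill singular lines, the affine alternative and all $q\geq 3$ by purely internal dimension arguments in $\PAG^6(\K)$ driven by Lemma~\ref{crucial5}. But Lemma~\ref{crucial5}, read in residue terms, only applies to a triple of conics of which at least one prescribed pair meets: its hypotheses include $\dim(\<H_2,H_3\>)=7$. A parallel class consists of pairwise disjoint conics, so no triple inside it satisfies the hypotheses; and for a triple consisting of two parallel conics and a common transversal, the conclusion of the lemma already holds, so it gives no information. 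In fact the paper never eliminates the $(q+1)^2$-point configuration dimensionally at all, and it eliminates a singular line by a projection argument only in the $q^2+q+1$ case (for the other case it shows merely that three or more singular lines would force $\dim\<X'\>\le 5$, leaving up to two of them alive).

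The mechanism the paper actually uses---and which your proposal never invokes---is Lemma~\ref{remtruuk}, a cross-residue count that lives one level up, in $(X_p,\Xi_p)$. Via the Quadrangle Lemma, a point $y$ of $X_p$ non-collinear with $x$ and carrying $v$ singular lines (equivalently, $v$ points in the residue at $y$) produces at least $(v-q-1)q+1$ distinct symps of $X_p$ through $x$, hence that many conics in the residue at $x$. Since every residue has at least $q^2+q+1$ points and at most $(q+1)(q+2)$ conics, this yields $q^3+1\leq(q+1)(q+2)$, which forces $q=2$; the same lemma then rules out the $9$-point cases for $q=2$, with or without singular lines (they would force at least $13$ conics in some residue against the maximum of $12$), leaving $|X'|=7$ and no singular lines. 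This cubic-versus-quadratic tension cannot be reproduced inside $X'$ alone: internally, both the number of points and the number of conics are of order $q^2$, and the dimension constraint has already been spent (in Lemma~\ref{semiaffine} and the preceding lemmas) on pinning those counts down. So, as you suspected, finiteness and the $\PAG^6(\K)$ hypothesis are both needed, but the missing third ingredient is the interplay between different residues of $X_p$; without it your plan does not close.
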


\begin{proof} By Lemma~\ref{lemma66} and Lemma~\ref{lemma1}, $X'$ cannot contain two intersecting singular lines.
Let $|\K|=q$ (which is allowed by Lemma~\ref{finite}). 
Suppose first that there is some singular line $L$ (and remember that there is no singular plane by the previous lemma). We project $X'\setminus L$ with center $L$ onto a suitable $4$-space $\Theta$.  We consider a conic $C$ containing a point $x$ of $L$. If $x\neq x'\in L$, then Lemma~\ref{semiaffine} assures that all points of $X'$ are obtained by considering all conics through $x'$ and a point of $C$, except possibly for the points on one more conic through $x'$. Hence either there are $q^2+q+1$ points in $X'$ or $(q+1)^2$. In the former case, the projections of the conics through $x$ and $x'$ form the systems of generators (except for one generator of each system) of a hyperbolic quadric in a hyperplane of  $\Theta$, or are contained in a plane, hence $\dim\<X'\>\leq 5$. In the other case, there are exactly $(q+1)^2$ conics meeting $L$. Through a point $z$ of $X'\setminus L$, there are $q+1$ conics meeting $L$, taking account of $q^2+q+1$ points of $X'$; hence there is room for either one more conic, or a singular line $L'$ through $z$. In the latter case, varying $z$ on $L'$, we see that every conic that intersects $L$ also intersects $L'$.  It follows that, if there were at least three singular lines, then all points of $X'$ are contained in the span of these lines, which is $5$-dimensional. Hence we may assume that there are at most two singular lines. But then, there is a conic not intersecting any singular line. We now first look at the case in which there are no singular lines. 

If there are no singular lines, then we consider a point $x$ and a conic $C\not\ni x$. The number of conics through $x$ is either $q+1$ (and then $|X'|=q^2+q+1$ and there are $q^2+q+1$ conics), or $q+2$ (there are no other possibilities by applying Lemma~\ref{semiaffine} to $x$ and $C$), in which case $|X'|=(q+1)^2$ and $|\Xi'|=(q+1)(q+2)$.

Assume now that $q>2$. By the foregoing, we have at most $(q+1)(q+2)$ conics, at least $q^2+q+1$ points and at least one conic intersecting no singular line. As this holds for every residue, Lemma~\ref{remtruuk} implies $q^3+1\leq (q+1)(q+2)$, which is only possible when $q=2$.

If $q=2$ and if we do have singular lines, then we also have a conic not intersecting any singular line. In this case, we have nine points. Note that this is the maximum number of points possible, implying that we have at most twelve conics. But since we have nine points in $X'$, we have nine singular lines in $X_p$ through the point $p'$. The conic in $X'$ not intersecting any singular line corresponds to a symp $H$ of $X_p$ through $p'$. Take a point $x$ on $H$ not $X_p$-collinear with $p'$. Putting $p'=y$ in Lemma~\ref{remtruuk} (and $x$ remains $x$), the conditions are satisfied with $v=9$ and $q=2$, and so the residue through $x$ contains $(v-q-1)q+1=13$ conics, exceeding the maximum number possible, a contradiction.  The same argument works if we have nine points without singular lines, of course. Hence we only have seven points and hence every pair of conics intersects.
\end{proof}

Hence it follows that, if $|\K|>2$, then $(X_p,\Xi_p)$ is a proper Mazzocca-Melone set of split type $3$, which does not exist by Proposition~\ref{d=3}.  But if $|\K|=2$, then either $\dim\<X'\>\leq 5$, or  we have seven points and every pair of conics intersects. Now $(X_p,\Xi_p)$ cannot exist by \cite[Remark 5.5]{JSHVM4}.

 Hence we have shown that proper Mazzocca-Melone sets of split type $5$ cannot exist. Taking account of Corollary~\ref{MMinductive}, this implies:
\begin{prop}\label{cased=579}
Proper Mazzocca-Melone sets of split type $d\in\{5,7,9\}$ do not exist.
\end{prop}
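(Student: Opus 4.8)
The plan is to treat the three values in stages, exploiting the fact that the substantive work---the nonexistence of proper sets of split type $5$---has already been completed in the lemmas above, and is the only genuine obstacle. Those lemmas analyse a point-residue $(X_p,\Xi_p)$ of a hypothetical proper type-$5$ set (a pre-Mazzocca-Melone set of split type $3$) together with its further residue $(X',\Xi')=((X_p)_{p'},(\Xi_p)_{p'})$, and show that the latter either spans a space of dimension at most $5$, or has $|\K|=2$ and consists of exactly seven points; in every surviving configuration one lands in a situation ruled out by \cite{JSHVM4}. I would record this conclusion as the base case: \emph{no proper Mazzocca-Melone set of split type $5$ exists.}

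With the base case in hand, the value $d=7$ follows from a single application of the inductive residue machinery. Assuming for contradiction a proper Mazzocca-Melone set $(X,\Xi)$ of split type $7$, I would fix an arbitrary point $p\in X$; since $6\leq 7\leq 9$, Corollary~\ref{MMinductive} shows that the residue $(X_p,\Xi_p)$ is a genuine Mazzocca-Melone set of split type $5$ (in particular Axiom~(MM3) holds for it), while Lemma~\ref{proper} shows it is proper. This contradicts the base case, so proper sets of split type $7$ cannot exist.

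The value $d=9$ is then obtained by repeating the reduction one level higher: given a hypothetical proper type-$9$ set and a point $p$, Corollary~\ref{MMinductive} (still applicable, since $d=9$ lies in the range $6\leq d\leq 9$) produces a residue $(X_p,\Xi_p)$ that is a proper Mazzocca-Melone set of split type $7$, which we have just excluded. I would stress that the staging is essential: type $9$ must be reduced to type $7$, and \emph{not} directly to type $5$, precisely because the residue construction drops $d$ by exactly two and Corollary~\ref{MMinductive} only supplies Axiom~(MM3) within the narrow window $6\leq d\leq 9$. Once the type-$5$ base case is available, both reductions are immediate, and no uniform single-step treatment is possible.
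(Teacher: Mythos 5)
Your proposal is correct and follows exactly the paper's own route: the nonexistence of proper sets of split type $5$ is the substantive content established by the preceding lemmas (culminating in the contradiction via \cite{JSHVM4}), and the cases $d=7$ and $d=9$ then follow by applying Corollary~\ref{MMinductive} together with Lemma~\ref{proper} to pass to residues, stepping down from $9$ to $7$ to $5$. Your explicit remark that the reduction must proceed in steps of two is a correct (and slightly more careful) articulation of what the paper leaves implicit in the phrase ``Taking account of Corollary~\ref{MMinductive}.''
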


\subsection{Mazzocca-Melone sets of split type at least 10}
Here, we assume that $(X,\Xi)$ is a Mazzocca-Melone set of split type $d\geq 10$. We show that it necessarily is a non-proper one. Note that, this time, we cannot use Lemma~\ref{MMinductive} anymore to apply induction. The idea is now to use Lemma~\ref{badpoints} in a completely different way, namely, to use wrinkles as centers to project the residues in order to get the dimensions small enough so that (MM3) is satisfied. Lemma~\ref{badpoints} assures that (MM2) is preserved, which is not the case for arbitrary projections.

\begin{prop}\label{casengeq5}
Proper Mazzocca-Melone set of split type $d\geq 10$ do not exist.
\end{prop}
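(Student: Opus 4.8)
The plan is to argue by induction on $d$, assuming the classification is already settled for every split type smaller than $d$: no proper Mazzocca-Melone set of split type in $\{3,5,7,9\}\cup\{10,\dots,d-1\}$ exists, and a proper set of split type $8$ is the variety $\mathcal{E}_{6,1}(\K)$, which spans $\PAG^{26}(\K)$. So suppose, for contradiction, that $(X,\Xi)$ is a proper Mazzocca-Melone set of split type $d\geq 10$, and fix $p\in X$. By Lemma~\ref{induction} and Lemma~\ref{proper}, the residue $(X_p,\Xi_p)$ is a \emph{proper pre}-Mazzocca-Melone set of split type $d-2$ in the space $C_p$, whose dimension is at most $\dim T_p-1\leq 2d-1$ by (MM3).

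The key step is to promote $(X_p,\Xi_p)$ to a genuine (proper) Mazzocca-Melone set of split type $d-2$, i.e.\ to enforce Axiom (MM3). For $6\leq d\leq 9$ this was automatic (Corollary~\ref{MMinductive}): the dimension count there bounds $\dim(T_x\cap T_p)$ by $\lfloor\frac{3d+1}{2}\rfloor$, hence every residual tangent space by $2(d-2)$, and this count is valid for \emph{all} $d\geq 7$, using only that the relevant points are \good\ (Lemma~\ref{E6completed}) together with Lemmas~\ref{quadric} and~\ref{paraquad}. For $d\geq 10$ smoothness can no longer be proved, so wrinkles may enlarge $\dim W_x=\dim(T_x\cap\langle H\rangle)$ and (MM3) may fail for the residue. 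To repair this I would project $(X_p,\Xi_p)$ from the images in $C_p$ of the wrinkles of $p$. The crucial observation is Lemma~\ref{badpoints}: since no wrinkle $w$ of $p$ lies in $\langle T_p(\xi_1),T_p(\xi_2)\rangle$ for distinct $\xi_1,\xi_2\in\Xi$, its image $\bar w\in C_p$ lies in no span $\langle \eta_{\xi_1},\eta_{\xi_2}\rangle$ of two residual symps $\eta_{\xi_i}=C_p\cap T_p(\xi_i)$; taking $\xi_1=\xi$ and any second symp (which exists by properness) already gives $\bar w\notin\langle\eta_\xi\rangle$ for every single $\xi$. Hence the projection centre meets no residual symp and no span of two of them, so the projection is injective on every pair of residual symps, keeps each residual symp a split quadric of unchanged type $d-2$, and therefore preserves (MM1) and (MM2). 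Each such projection lowers $\dim C_p$ by one while removing a wrinkle, so after finitely many steps the configuration is \good, and the Corollary~\ref{MMinductive} count then delivers (MM3). The outcome is a proper Mazzocca-Melone set $(X',\Xi')$ of split type $d-2$ in a projective space of dimension $N'\leq 2d-1$.

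It remains to extract the contradiction, distinguishing the three possibilities for $d-2\geq 8$. If $d-2\geq 10$, the induction hypothesis forbids $(X',\Xi')$. If $d-2=9$, i.e.\ $d=11$, this contradicts Proposition~\ref{cased=579}. Finally, if $d-2=8$, i.e.\ $d=10$, then by Proposition~\ref{casen=4} the proper set $(X',\Xi')$ is $\mathcal{E}_{6,1}(\K)$, which spans $\PAG^{26}(\K)$; but $N'\leq 2d-1=19<26$, a contradiction. In every case $(X,\Xi)$ cannot be proper, completing the induction, and this is precisely where the hypothesis $d\geq 10$ is needed, since it forces $d-2\geq 8$ and, in the boundary case $d=10$, pits the dimension $19$ against the unavoidable $26$.

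I expect the genuinely delicate part to be the middle paragraph: verifying that projecting the residue away from the images of the wrinkles really does preserve (MM1) and (MM2), that the process terminates in a \good\ configuration (i.e.\ that the projections remove all wrinkles without creating new obstructions to the tangent-space bound), and that the hypotheses of the Corollary~\ref{MMinductive} computation survive the projection. The supporting bookkeeping---controlling $\dim W_x$ once smoothness is available, and tracking how successive projections interact with the nested family of residual symps---is where the argument must be carried out carefully; by contrast, the concluding contradiction is a one-line dimension count resting solely on the fact that $\mathcal{E}_{6,1}(\K)$ cannot be squeezed into fewer than $26$ dimensions.
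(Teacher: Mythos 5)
Your global strategy --- induction on $d$, passing to the residue at a point, Lemma~\ref{badpoints} as the tool that lets wrinkle-based projections respect (MM1) and (MM2), and the final count $2d-1=19<26$ against $\mathcal{E}_{6,1}(\K)$ when $d=10$ --- is exactly the skeleton of the paper's proof, and your observation that the count in Corollary~\ref{MMinductive} would work for all $d\geq 7$ if only smoothness were available is correct. But the middle paragraph, which you rightly single out as the delicate part, has two genuine gaps, and the paper's actual mechanism is different. First, the iteration is not justified: Lemma~\ref{badpoints} says a wrinkle of $p$ avoids the span of \emph{two} tangent spaces $T_p(\xi_1),T_p(\xi_2)$, which (as you verify) legitimizes the \emph{first} projection; but after projecting from $\bar w_1$, the next centre must avoid the enlarged spans $\langle \bar w_1,T_p(\xi_1),T_p(\xi_2)\rangle$, and nothing in Lemma~\ref{badpoints} gives this. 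The paper avoids iteration altogether: in its ``bad'' case it finds a whole \emph{plane} $\pi$ of wrinkles of $p$, inside $M_2\cap T_p\cap\langle H\rangle$, where $H$ is a symp through a point $x$ collinear with $p$ (not through $p$), $M_1$ is the maximal singular subspace of $H$ containing the points collinear with $p$ (Lemma~\ref{subspace}) and $M_2$ is a maximal singular subspace disjoint from $M_1$; the estimate $\dim(M_2\cap T_p\cap\langle H\rangle)\geq\lfloor d/2\rfloor-3\geq 2$ is precisely where $d\geq 10$ enters. It then projects the set of points collinear with $p$ from $\pi$ in one stroke, which is legitimate because \emph{every} point of $\pi$ is itself a wrinkle, so Lemma~\ref{badpoints} applies to all points of the centre simultaneously.

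Second, and more fundamentally, the aim of your projections is misconceived. Removing wrinkles of $p$ (or rather their images in $C_p$) cannot ``make the configuration \good'' in the sense required by Corollary~\ref{MMinductive}: that count bounds $\dim(T_x\cap T_p)$ via $W_x=T_x\cap\langle H\rangle$ for a symp $H$ through $p$, and what must be excluded there are wrinkles \emph{of the points $x$ collinear with $p$} --- a property of $(X,\Xi)$ upstairs which no projection of the residue can alter. (Moreover there may be infinitely many wrinkles, so your process halts only because the dimension is finite, not because smoothness has been reached.) The paper never proves smoothness for $d\geq 10$. Instead it proves a dichotomy for $U_p=T_p\cap\langle H\rangle$: either $\dim U_p\leq d-3$, in which case a plane of $T_x(H)$ disjoint from $T_p$ exists and $\dim(T_p\cap T_x)\leq 2d-3$ follows with no smoothness at all; or $\dim U_p\geq d-2$, in which case the plane $\pi$ above exists, and the projection from $\pi$ followed by a hyperplane section lands in dimension at most $2d-4=2(d-2)$, so that (MM3) holds \emph{trivially} for the projected set and induction kills it. Either way $\dim(T_p\cap T_x)\leq 2d-3$ for all $x$ collinear with $p$, whence the residue itself is a proper Mazzocca-Melone set of split type $d-2$ in dimension at most $2d-1$, and your concluding dimension count (which is correct, and coincides with the paper's strengthened base case for $d-2=8$) finishes the proof. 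As written, your repair step does not go through and must be replaced by this case distinction.
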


\begin{proof}
We use induction to show this, including the cases $d=8,9$, which were already handled. For $d=8$, we moreover assume that the ambient projective space has dimension at most $19$. Then the cases $d=8,9$ do not occur by Propositions~\ref{casen=4} and~\ref{cased=579}. The cases $d=8,9$ are the base of our inductive argument.

Now let $d\geq 10$.
Let $p\in X$ be arbitrary, and let $x\in X$ be collinear to $p$. Our principal aim is to show that $T_p\cap T_x$ has dimension at most $2d-3$.

Consider an arbitrary \EQ\ $H$ through $x$ which does not contain $p$, and which exists by Lemma~\ref{proper}. Suppose first that $T_p$ intersects $\<H\>$ in a subspace $U_p$ of dimension at most $d-3$. Then, as $T_x(H)\cap U_p$ has dimension at most $d-3$, we can find a plane $\pi$ in $T_x(H)$ disjoint from $U_p$. Since the plane $\pi$ has empty intersection with $T_p$, we have $\dim(T_p\cap T_x)\leq 2d-3$.

Now suppose that $U_p$ has dimension at least $d-2$. By Lemma~\ref{subspace}, the set of points of $H$ collinear with $p$ is contained in a maximal singular subspace $M_1$ of $H$. We can now take a second maximal singular subspace $M_2$ of $H$ disjoint from $M_1$. The intersection $M_2\cap U_p$ has dimension at least $\lfloor \frac{d}{2} \rfloor+(d-2)-(d+1)=\lfloor \frac{d}{2} \rfloor-3\geq 2$. Hence we can find a plane $\pi\subseteq M_2\cap U_p$. All points of $\pi$ are wrinkles of $p$.

Let $X'_p$ be the set of points of $X$ collinear with $p$. Lemma~\ref{badpoints} implies that the projection $Y'_p$ of $X'_p$ with center $\pi$ onto a subspace $U$ of $T_p$ complementary to $\pi$ (and of dimension at most $2d-3$) and containing $p$ is injective and induces an isomorphism on the span of any two subspaces $T_p(H_1),T_p(H_2)$. Let $\Xi'_p$ be the family of projections of subspaces $T_p(\xi)$, with $\xi$ ranging through all quadratic spaces containing $p$. Now we consider a hyperplane $V$ of $U$ not containing $p$ and put  $Y_{p,\pi}=U'\cap Y'_p$ and $\Xi_{p,\pi}=\{\xi'\cap U':\xi'\in\Xi'_p\}$. Clearly, $Y_{p.\pi}\cap\xi^*$ is a split quadric and every member of $\Xi_{p,\pi}$ is a $(d-1)$-dimensional space. Moreover, the choice of $\pi$ implies that the pair $(Y_{p,\pi},\Xi_{p,\pi})$ satisfies (MM1) and (MM2).  Now $U'$ has dimension at most $2d-4$, so (MM3) is satisfied trivially. By induction, such a pair does not exist.

Hence our principal aim is proved. Hence we know that, for all $x$ collinear with $p$, the space $T_p\cap T_x$ has dimension at most $2d-3$. This implies that the residue $(X_p,\Xi_p)$ is a proper Mazzocca-Melone set of split type $d-2$ in a projective space of dimension at most $2d-1$. By induction, such a set does not exist.
\end{proof}


\section{Verification of the axioms}\label{verification}

In this section we verify that the Mazzocca-Melone axioms hold in all examples listed in the Main Result. Our approach is almost completely geometric. We only have to know that an embedding in a projective space with given dimension exists. For that, we can refer to the literature (note these embeddings are always established in an algebraic way). The advantage is that we do not have to introduce the rather long algebraic formulae leading to every construction. It also shows that the Mazzocca-Melone axioms are really natural, and we can see geometrically why they have to hold.

So let $(X,\Xi)$ be either a Segre variety $\mathcal{S}_{p,q}(\K)$, $p,q\geq 1$, $p+q\leq 4$, where $\Xi$ is the family of $3$-spaces spanned by the subvarieties isomorphic to $\mathcal{S}_{1,1}(\K)$; or a line Grassmannian variety $\mathcal{G}_{p,1}(\K)$,  $p\in\{4,5\}$, where $\Xi$ is the family of $5$-spaces spanned by the subvarieties isomorphic to $\mathcal{G}_{3,1}(\K)$; or the half-spin variety $\cD_{5,5}(\K)$, where $\Xi$ is the family of $7$-spaces spanned by subvarieties  isomorphic to a half-spin variety $\cD_{4,4}(\K)$; or the variety $\mathcal{E}_{6,1}(\K)$, where $\Xi$ is the family of $9$-spaces spanned by subvarieties isomorphic to the variety $\mathcal{D}_{5,1}(\K)$. Suppose $X$ spans $\PAG^N(\K)$.

We first show that for every member $\xi$ of $\Xi$, the intersection $X\cap\xi$ is the desired subvariety (which is always isomorphic to some hyperbolic quadric).

This is clear for the Segre varieties and the line Grassmannian varieties since their definition implies immediately that every Segre or line Grassmannian, respectively,  subvariety induced by a suitable subspace of the underlying projective space is also induced by a suitable subspace of the ambient projective space. Since the half-spin variety $\mathcal{D}_{5,5}(\K)$ appears as residue in the variety $\mathcal{E}_{6,1}(\K)$,  it suffices to show the result for the latter.

We know that $N=26$ in this case and that the residue in a point $p\in X$ is isomorphic to a half-spin variety $\mathcal{D}_{5,5}(\K)$, which lives in a space of dimension at most 15 (note that we ignore Remark~\ref{secant} here, which says that this dimension is precisely 15); hence the space $T_p$ generated by all singular lines through $p$ has dimension at most 16 (which already shows (MM3)). Now consider any symp $H$ which is opposite $p$ in the corresponding building of type $\mathsf{E}_6$. Then no point of $H$ is collinear with $p$ in $X$. We claim that $T_p$ and $H$ generate the whole space $\PAG^{26}(\K)$. Indeed, let $x\in X$ be an arbitrary point, which does not belong to $H$ and which is not collinear with $p$. By 3.7 of \cite{TitsE6}, there is a unique symp $H'$ containing $p$ and $x$. Then, by the same reference, $H\cap H'$ is some point $x'$. Then $x'$ and the points of $H'$ collinear with $p$ generate $\<H'\>$, hence $x$ belongs to that space and the claim is proved. It follows that all points lie in the space generated by $T_p$ and $H$. Since the former has dimension at most 16 and the latter dimension $9$, and since the whole space has dimension 26, we see that $T_p$ and $\<H\>$ are disjoint. In particular, $\<H\>$ does not contain $p$. Since $p$ and $H$ are essentially arbitrary, this shows that the space generated by any symp does not contain any point ``opposite'' that symp. Now suppose $\<H\>$ contains a point $x\in X\setminus H$ not opposite $H$. Then, by 3.5.4 and 3.9 of \cite{TitsE6}, there is a $4$-space $U$ in $H$ contained in a singular $5$-space $U'$ together with $x$. But Lemma~\ref{quadric} implies that $U'$ contains points of $H$ outside $U$, a contradiction. Our assertion is proved.

\begin{rem}\label{mm2}\rm In the previous argument, the two symps $H$ and $H'$ can be regarded as an arbitrary pair of symps meeting in one point. Since $T_p(H')$ belongs to $T_p$ and is hence disjoint from $\<H\>$, we see that $\<H\>\cap\<H'\>=\{x'\}$, proving (MM2) in this case.
 \end{rem}

We now verify the axioms for $(X,\Xi)$.

Axiom~(MM1) follows in each case directly from the (geometric) definition of the variety.

It is readily seen that the validity of (MM2) is inherited by the residues. Hence we only need to check (MM2) for the three varieties $\mathcal{S}_{2,2}(\K)$, $\mathcal{G}_{5,1}(\K)$ and $\mathsf{E}_{6,1}(\K)$.  Let us concentrate on the latter; for the former the proofs are similar.

Let $H$ and $H'$ be two symps. There are two possibilities. If $H\cap H'$ is a singleton, then (MM2) follows from Remark~\ref{mm2}. If $H\cap H'$ is a 4-space, then (MM2) follows from Lemma~\ref{quadric} and the fact that $\<H\>$ does not contain points of $H'\setminus H$.

Axiom (MM3) follows immediately from the dimensions of the universal embeddings of the residues.

This completes the proof of the Main Result.

\begin{rem}\rm
There is a variation of Axiom (MM3) involving the dimension of the space generated by the tangent spaces at a point $x$ to the symps intersecting a given singular line none of whose points is collinear to $x$. In that case, one characterizes the same varieties as in the Main Result with $d\geq 2$, but additionally all Segre varieties $\cS_{p,q}(\K)$ for arbitrary $p,q\geq 1$, and all line Grassmannian varieties $\cG_{p,1}(\K)$, for $p\geq 3$. The proofs rely on the Main Result of the present paper, and will appear elsewhere. This result shows that our approach can also include the higher dimensional FTMS (the North-West $3\times 3$ square for higher dimensions).
\end{rem}

{\bf Acknowledgement} The authors would like to thank the Mathematisches Forschungsinstitut Oberwolfach for providing them with the best possible research environment imaginable during a stay as Oberwolfach Leibniz Fellow (JS) and Visiting Senior Researcher (HVM). During this stay substantial progress on this article was made.


\end{document}